\newtheorem{thm}{Theorem}[section]
\newtheorem{cor}[thm]{Corollary}
\newtheorem{lem}[thm]{Lemma}
\newtheorem{prop}[thm]{Proposition}
\theoremstyle{definition}
\theoremstyle{remark}
\newtheorem{rem}[thm]{Remark}
\newtheorem{ex}[thm]{Example}
\numberwithin{equation}{section}
\theoremstyle{definition}
\newcommand{\un}{\textrm{u}}
\newcommand{\st}{\textrm{s}}
\newcommand{\eps}{\varepsilon}
\def\X{\mathcal{X}}
\begin{document}

\title[Arnold diffusion in a model of dissipative system]{Arnold diffusion in a model of dissipative system}
\author{Samuel W. Akingbade}
\address{Yeshiva University, Department of Mathematical Sciences, New York, NY 10016, USA }
\email{sakingba@mail.yu.edu}
\thanks{S.W.A. and M.G. were partially supported by NSF grant  DMS-1814543.}
\author{Marian Gidea}
\address{Yeshiva University, Department of Mathematical Sciences, New York, NY 10016, USA }
\email{Marian.Gidea@yu.edu}
\author{Tere M-Seara}
\address{Departament de Matem\`{a}tiques and IMTECH, Universitat Polit\`{e}cnica de Catalunya (UPC), Diagonal 647, 08028 Barcelona, Spain, Centre de Recerca Matem\`{a}tica, Barcelona, Spain.}
\email{Tere.M-Seara@upc.edu}
\thanks{The work of T.M-S. was partially supported by  the grants PGC2018-098676-B-100 and PID-2021-122954NB-100  funded by MCIN/AEI/10.13039/501100011033 and “ERDF A way of making Europe”. This work is also supported by the Spanish State Research Agency, through the Severo Ochoa and María de Maeztu Program for Centers and Units of Excellence in R\&D (CEX2020-001084-M). T.M-S. was partially supported  by the Catalan Institution for Research and Advanced Studies via an ICREA
Academia Prize 2019.}


\begin{abstract}
For a mechanical system consisting of a rotator and a pendulum coupled via a small, time-periodic Hamiltonian perturbation, the Arnold diffusion problem asserts the existence of `diffusing orbits' along which the energy of the rotator grows by an amount independent of the size of the coupling parameter,  for all sufficiently small values of the coupling parameter.
There is a vast literature on establishing Arnold diffusion for such systems.
In this work, we consider the case when an additional, dissipative  perturbation is added to the rotator-pendulum system with coupling. Therefore, the system obtained is  not symplectic but conformally symplectic.
We provide explicit conditions on the dissipation parameter, so that the resulting system still exhibits energy growth.
The fact that Arnold diffusion may play a role  in  systems with small dissipation was conjectured by Chirikov.
In this work, the coupling is carefully chosen, however the mechanism we present can be adapted to general couplings and we will deal with the general case  in future work.
\end{abstract}

\maketitle 
\section{Introduction}

The Arnold diffusion problem \cite{Arnold64} broadly refers to a universal mechanism of  instability for multi-dimensional Hamiltonian systems that
are small perturbations  of integrable ones. Through this mechanism,
chaotic  transfers of energy take place between subsystems  of  a given Hamiltonian system, which,
in particular, can lead  to significant growth of energy of one of the subsystems over time.
Chirikov \cite{Chirikov79} conjectured that Arnold diffusion may play a role  in systems with small dissipation as well.

Studying Hamiltonian systems with small dissipation is  important for applications, as many real-life  physical systems experience some energy loss over time.

A significant class of examples is furnished by Celestial Mechanics, on the motion of celestial bodies under mutual gravity.
As the  gravitational force is conservative, such systems are usually modeled as Hamiltonian systems.
Nevertheless dissipative forces are present in real-world systems, including tidal forces, Stokes drag, Poynting-Robertson effect,
Yarkowski/YORP effects, atmospheric drag, and their effect may accumulate in the long run.
While some of these effects may be negligible over relatively short time scales, others, for instance Earth's atmospheric drag on artificial satellites,
can have significant effects over practical time scales.  See, e.g. \cite{milani1987non,celletti2007weakly,ragazzo2017viscoelastic}.

Another class of examples is given by energy harvesting devices. Some of these devices consist of  systems of oscillating beams made of piezoelectric materials, where on the one hand there is dissipation due to mechanical friction, and on the other hand there is external forcing, owed to the movement of the device, that triggers the beams to oscillate. See, e.g. \cite{moon1979magnetoelastic,erturk2009piezomagnetoelastic,granados2017invariant}.

Of course, there are many other examples.
In this paper we consider a simple model of a mechanical system, consisting of a rotator and a pendulum with a small, periodic coupling, subject to a small dissipative perturbation. Coupled rotator-pendulum systems are fundamental models in the study of Arnold diffusion in Hamiltonian systems.
Adding a dissipative perturbation results in a system that  is non-Hamiltonian. The symplectic structure changes into a conformally symplectic one \cite{banyaga2002some}.  We show that such a system exhibits Arnold diffusion, in the sense that there exist pseudo-orbits for which the energy of the rotator subsystem grows by some quantity that is independent of the smallness parameter. (By a pseudo-orbit here we mean a sequence of orbit segments of the flow such that the endpoint of each orbit segment is `close' to the starting point of the next orbit segment in the sequence.) We note that for the unperturbed rotator-pendulum system, the energy of the rotator subsystem is conserved. The small, periodic coupling added to the system makes the rotator undergo small oscillations in energy, while the dissipative perturbation typically yields a loss in energy. The physical significance of our result is that, despite the dissipation effects,  it is possible to overall gain a significant amount of energy over time.

Specifically, the unperturbed rotator-pendulum system is given by a Hamiltonian of the form
\[
H_0(p,q,I,\theta)=h_0(I)+h_1(p,q),
\]
with
$z=(p,q,I,\theta)\in \mathbb{R}\times \mathbb{T}^1\times \mathbb{R}\times \mathbb{T}^1$,
where $h_0(I)$ represents the Hamiltonian of the rotator, and $h_1$ represents the Hamiltonian of the pendulum, and
$\mathbb{T}^1=\mathbb{R}/2\pi\mathbb{Z}$.
The perturbed system is of the form
\begin{equation}\label{eqn:ode}
  \dot z=J\nabla_z H_0(z) +\eps J\nabla_z H_1 (z,t) +\X_{\lambda}(z),
\end{equation}
where $H_1(z,t)$ is a Hamiltonian that is $2\pi$-periodic in  time $t$,  $\eps\geq 0$ is the size of the coupling, $\X_\lambda(z)$ is a dissipative vector field depending on some dissipation parameter $\lambda=\lambda(\eps)> 0$,
and \[J=\left(
          \begin{array}{cc}
            J_2 & 0 \\
            0 & J_2 \\
          \end{array}
        \right)\textrm { where } J_2=
        \left(
          \begin{array}{cc}
            0 & -1 \\
            1 & 0 \\
          \end{array}
        \right).
        \]
Technical conditions on $h_0,h_1,H_1, \X_\lambda$ will be given in Section \ref{sec:model}.
Under those conditions, the phase space of the perturbed system has a $3$-dimensional Normally Hyperbolic Invariant Manifold (NHIM from now on), which  contains a $2$-dimensional invariant torus that is an  attractor for the dynamics in the NHIM.
This torus creates a  `barrier’ for the existence of diffusing orbits by using only  the `inner dynamics' (i.e.,  the dynamics restricted to the NHIM). The main question is whether there are diffusing orbits crossing this `barrier' by  combining the `inner dynamics' with the `outer dynamics' (i.e., the dynamics along homoclinic orbits to the NHIM).

We show that there exist $C>0$ and $\eps_0>0$ such that,  for all $0<|\eps|<\eps_0$,   there exists a pseudo-orbit $z(t)$, $t\in[0,T]$, of \eqref{eqn:ode},  such that
\[I(T)-I(0)>C \textrm{ for some }  T>0.\]
More technical details will be given in Theorem \ref{thm:main}. In order for the above result to be of practical interest, the  above solution $z(t)$ should be chosen such that at the beginning $(I(0),\theta(0))$ is below (relative to $I$) the aforementioned attractor, and at the end $(I(T),\theta(T))$ is above the aforementioned attractor.
Indeed, it is possible to increase $I$ by starting below the attractor and moving towards the attractor under the effect of the dissipation alone; obviously, such a solution is not of practical interest.

\section{Conservative vs. dissipative systems}

Arnold's conjecture on Hamiltonian instability  originated with an example of a rotator-pendulum system with a small, time-periodic Hamiltonian coupling of special type  \cite{Arnold64}. In his example, in the absence of the coupling, the phase space of the rotator forms a normally hyperbolic invariant manifold (NHIM) foliated by `whiskered', rotational tori, which have stable and unstable invariant manifolds that coincide.  The coupling in Arnold's example was specially chosen so that it vanishes on the family of  invariant tori, and so the tori are preserved. These tori constitute `barriers' for the existence of diffusing orbits, since orbits in the NHIM
always move along these tori and thus cannot increase their action variable.
At the same time the coupling splits the stable and unstable manifolds, so that the unstable manifold of each torus intersects transversally the stable manifolds of nearby tori. Thus, one can form `transition chains' of tori, and show that, by interspersing the `outer dynamics'  along the homoclinic orbits to the NHIM with the `inner dynamics' along the tori, one can obtain `diffusing' orbits along which the energy of the rotator exhibits a significant growth. Arnold conjectured that this mechanism of diffusion occurs in  close to integrable general systems.

However, in the case of a general coupling not all of the invariant tori in the NHIM are preserved. The  KAM theorem yields a Cantor set of tori that survive from the unperturbed case, with gaps in between. The splitting of the stable and unstable manifold makes the unstable manifold of each torus intersect transversally the stable manifolds of sufficiently close tori, however the size of the splitting is in general smaller than the size of the gaps between tori. This is known as the `large gap problem'. It was  overcome, for instance,  by forming transition chains that, besides rotational tori, also include `secondary' tori created by the perturbation \cite{DelshamsLS00,DelshamsLS03a}. Other geometric mechanisms use transition chains that include, besides rotational tori, Aubry-Mather sets \cite{GideaR12}.
Subsequently, \cite{GideaLlaveSeara20-CPAM} described a general mechanism of diffusion that relies mostly on the outer dynamics, and uses only the Poincar\'e recurrence  of the  inner dynamics (which is automatically satisfied in Hamiltonian systems over regions of bounded measure).

The references mentioned above encompass geometric ideas that we can adapt to the dissipative case.
However, there are  many other geometric mechanisms  that have been used in the Arnold diffusion problem, such as those in \cite{ChierchiaG94,bolotin1999unbounded,DelshamsLS00,Treschev02c,Treschev04,DelshamsLS06a,DelshamsLS2006b,Piftankin2006,GelfreichT2008,DelshamsHuguet2009,
Treschev12,GideaL17,Gelfreich2017,gidea_marco_2017}.
A variational program for the Arnold diffusion was formulated in \cite{Mather04,Mather12} for systems close to integrable.
Global variational methods for diffusion have been used in this setting for convex Hamiltonians
\cite{ChengY09,KaloshinZ15,bernard2016arnold,cheng2019variational,kaloshin2020arnold}.
A hybrid program combining geometric and  variational methods
was started in \cite{BertiB02a,BertiBB03}.

The case of a rotator-pendulum system subject to a non-Hamiltonian perturbation (consisting  of time-periodic Hamiltonian coupling  and a dissipative force)  which we consider in this paper, has very different geometric  features from the conservative case. The dissipation added to the Hamiltonian system is a singular
perturbation -- the system with positive dissipation leads to attractors inside the NHIM, which can contain at most one invariant torus.
Poincar\'e recurrence does not hold for dissipative systems. The stable and unstable manifolds of the NHIM do not necessarily intersect.
Therefore, the mechanism used for proving diffusion in the Hamiltonian case does not carry over to the non-Hamiltonian case.

To provide some intuition, we illustrate on a couple of basic examples some possible effects of dissipation on the geometry of Hamiltonian systems.

\begin{ex} The first example is the standard map.

The (conservative) standard map, which can be viewed as the  time-one map  of a non-autonomous Hamiltonian system representing  a `kicked rotator',   is given by
\begin{equation}\label{eqn:cons_standard}
\begin{split}
I'=&I+\eps\sin(\theta),\\
\theta'=&\theta+I+\eps\sin(\theta),
\end{split}
\end{equation}
where $\eps$ is the perturbative parameter, and $I,\theta$ are defined $(\textrm{mod}\, 2\pi)$.
This is a symplectic twist map; the symplecticity  condition being $dI'\wedge d\theta'=dI\wedge d\theta$ and the twist condition being $\frac{\partial \theta'}{\partial I}\neq 0$.
When $\eps=0$ the resulting map is the time-one map of a rotator, and is given by
\begin{equation}\label{eqn:cons_standard_0}
\begin{split}
I'=&I,\\
\theta'=&\theta+I.
\end{split}
\end{equation}
It is an integrable twist map, with all level sets of $I$ being rotational invariant circles on which the motion is a rigid rotation of frequency
$\omega(I)=I$. For $0<\eps\ll 1$, the KAM theorem asserts that there is a positive measure set of invariant circles, of Diophantine frequencies,  which survive the perturbation. The measure of the set of the KAM circles tends to $1$ as $\eps \to 0$. On the other hand, when $\eps>0$ increases,  fewer and fewer invariant circles survive, and eventually only one invariant circle is left.
The last rotational invariant circle for the standard map has frequency $\omega=\frac{1+\sqrt{5}}{2}$, which  is the golden mean \cite{greene1979method}. See Fig.~\ref{fig:test1}.

The dissipative standard map is defined as
\begin{equation}\label{eqn:dis_standard}
\begin{split}
I'=&(1-\lambda) I+\mu+\eps\sin(\theta),\\
\theta'=&\theta+(1-\lambda) I+\mu+\eps\sin(\theta),
\end{split}
\end{equation}
where $\lambda$ is the dissipative parameter, $0 \leq \lambda < 1$, and $\mu$ is the drift parameter; $\lambda=0$ corresponds to no dissipation.
The map is no longer symplectic, but conformally symplectic, that is $dI'\wedge d\theta'=(1-\lambda)dI\wedge d\theta$, and still satisfies a twist condition.

When $\eps=0$, the resulting map
\begin{equation}\label{eqn:dis_standard_0}
\begin{split}
I'=&(1-\lambda) I+\mu,\\
\theta'=&\theta+(1-\lambda) I+\mu,
\end{split}
\end{equation}
has a single rotational invariant circle $I=\frac{\mu}{\lambda}$ of frequency $\omega_*:=\frac{\mu}{\lambda}$.
The KAM theorem for conformally symplectic systems asserts that for each $0<\eps\ll 1$ there is one rotational invariant circle, of Diophantine frequency, that survives the perturbation, and that circle is a local attractor for the system (see, e.g. \cite{celletti2009quasi,calleja2013local,calleja2013kam,calleja2020kam}).
In order for the surviving circle  to be of Diophantine frequency $\omega_*$, we need to
properly adjust the drift parameter $\mu$.
See Fig.~\ref{fig:test2} and Fig.~\ref{fig:test3}.

We can rewrite the dissipative  standard map in terms of a frequency parameter $\omega_*$ rather than  in terms of the drift parameter $\mu=\lambda\omega_*$ obtaining:
\begin{equation}\label{eqn:dis_standard_1}
\begin{split}
I'=&I-\lambda(I-\omega_*)+\eps\sin(\theta),\\
\theta'=&\theta+I-\lambda(I-\omega_*)+\eps\sin(\theta).
\end{split}
\end{equation}
In this case, by the persistence of normal hyperbolicity of the torus given by $I=\omega_*$,  as $0<\eps\ll 1$ is varied,
there exists  an invariant torus of frequency $\omega=\omega(\eps)$ close to $\omega_*$; not all frequencies $\omega$ yield
KAM circles but only those  $\omega(\eps)$ which are Diophantine.

 \begin{figure}
  \centering
  \subfloat[ Conservative standard map\label{fig:test1}]{%
     \includegraphics[width=0.35\textwidth]{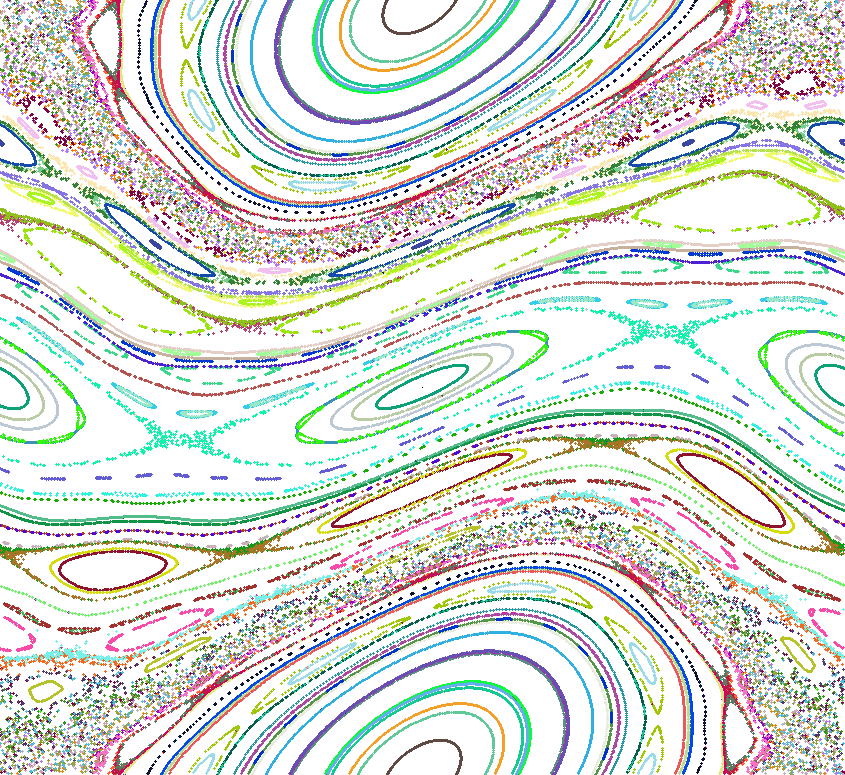}
    }
    \hfill
    \subfloat[ Dissipative standard map with un-adjusted drift\label{fig:test2}]{%
    \includegraphics[width=0.35\textwidth]{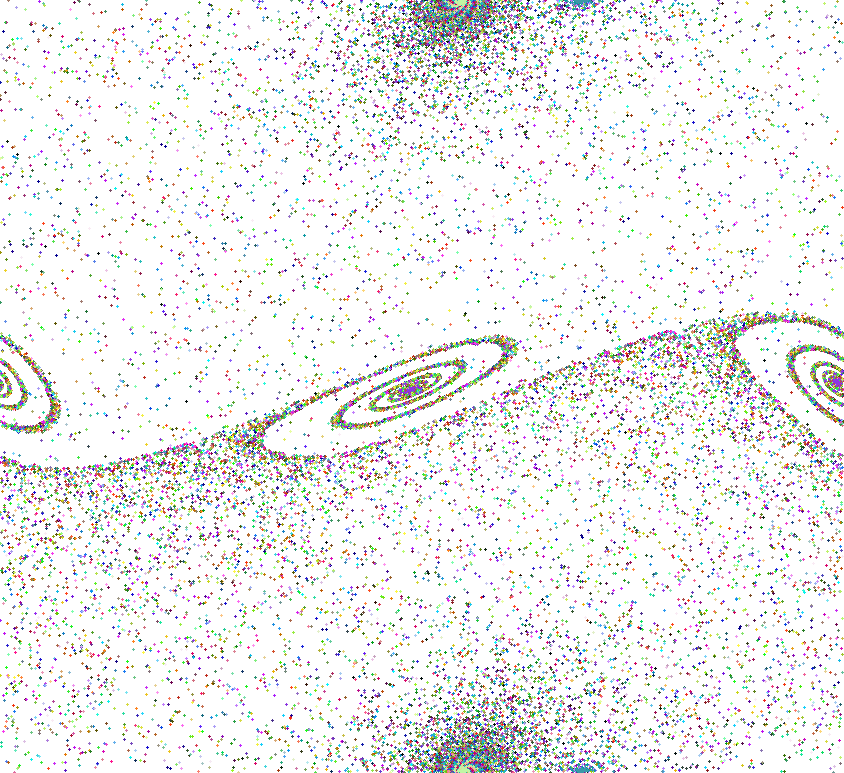}
    }\\
  \subfloat[Dissipative  standard map with adjusted drift\label{fig:test3}]{%
      \includegraphics[width=0.35\textwidth]{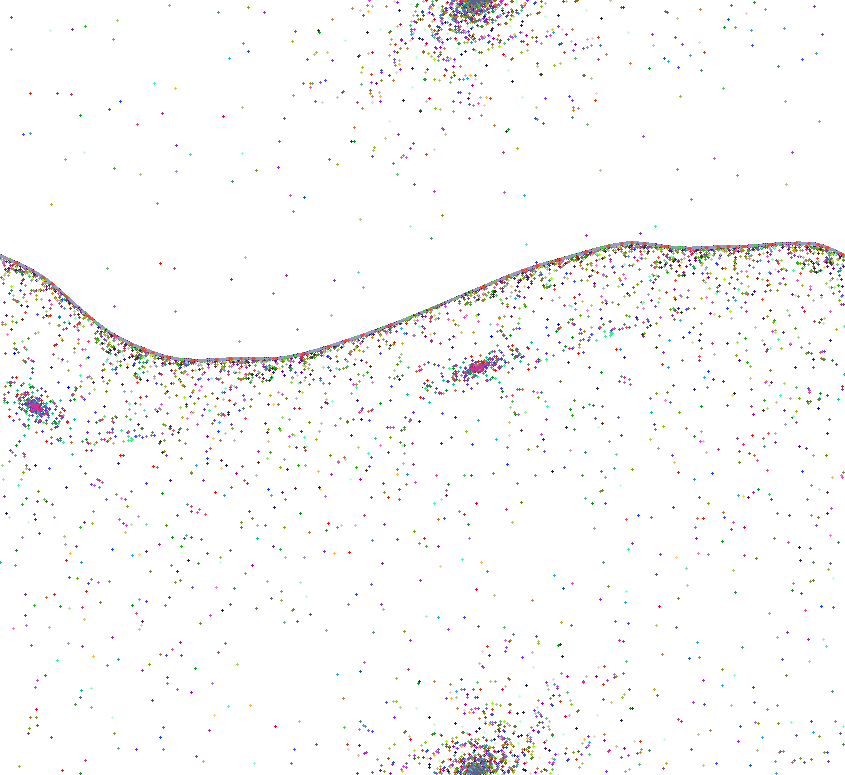}
    }
    \hfill
   \subfloat[The basins of attraction of the attractors appearing in
(c), using a colour scale based on rotation numbers  (Credit Matteo Manzi)\label{fig:test4}]{%
   \includegraphics[width=0.35\textwidth]{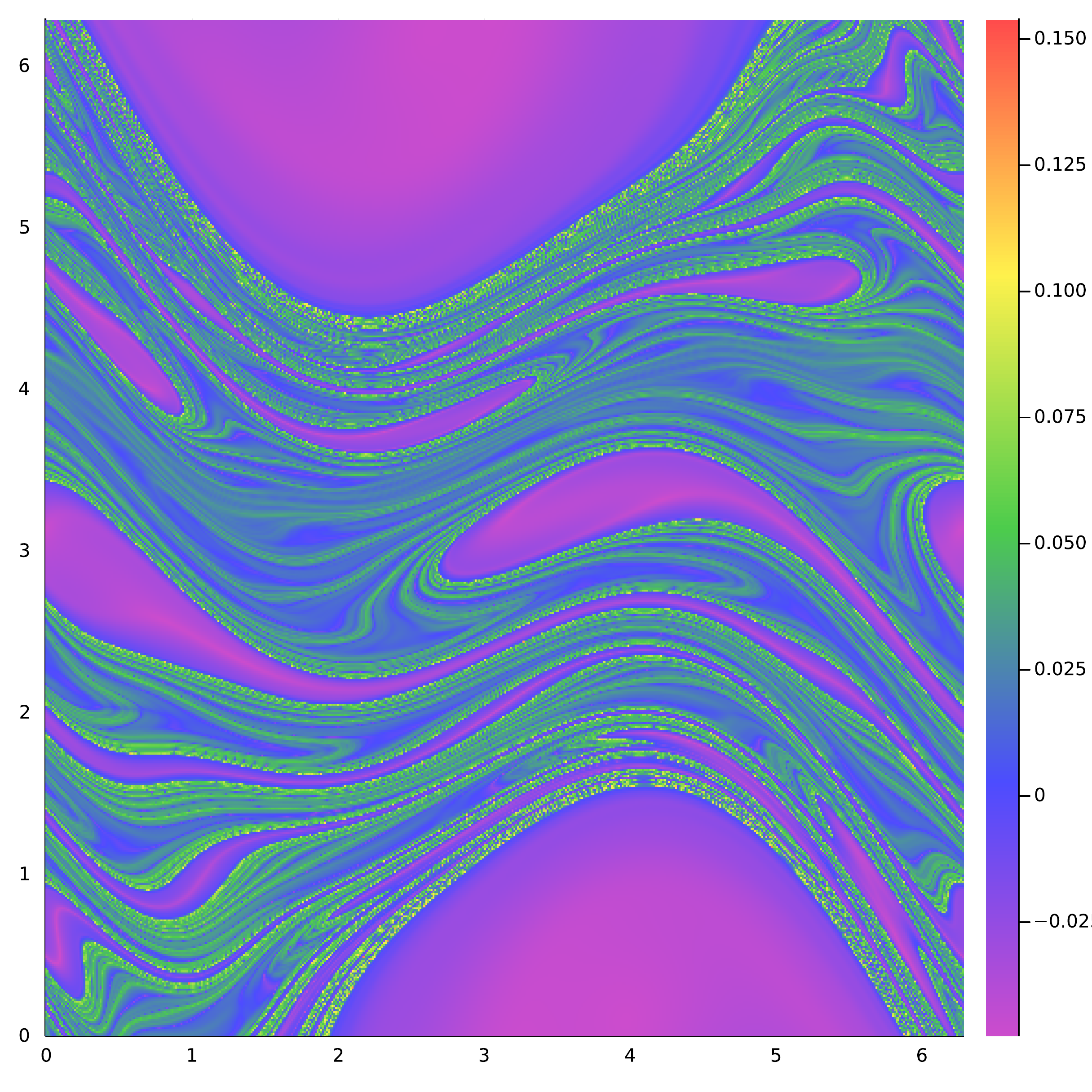}
    }
    \caption{The conservative and dissipative standard map}
  \end{figure}

\end{ex}
\begin{ex}
The second example is the pendulum, given by the Hamiltonian \[h_1(p,q)=\frac{p^2}{2}+(\cos(q)-1).\]
As it is well known, the pendulum has a hyperbolic fixed point whose stable and unstable manifolds coincide (see Fig.~\ref{fig:pendulum}).

When dissipation is added to the pendulum
\begin{equation*}
\begin{split}
\dot{p}=&-\lambda p +\sin(q),\\
\dot{q}=&p,
\end{split}
\end{equation*}
the origin is again a hyperbolic fix point with eigenvalues $\pm \sqrt{1+(\frac{\lambda}{2})^2}-\frac{\lambda}{2}$. Nevertheless, its stable and unstable manifolds cease to intersect, for dissipative coefficient  $\lambda>0$ (see Fig.~\ref{fig:pendulum_dis}).

However, when both dissipation and periodic forcing are added to the pendulum,
\begin{equation*}
\begin{split}
\dot{p}=&-\lambda p +\sin(q)+\eps\sin(t),\\
\dot{q}=&p,
\end{split}
\end{equation*}
for certain parameter values $\lambda>0$ and $\eps>0$, the time-$2\pi$ map exhibits chaotic attractors in the Poincar\'e section (see Fig.~\ref{fig:pendulum_dis_forced}).
\end{ex}

These simple examples illustrate that adding dissipation  to a Hamiltonian system typically destroys -- sometimes dramatically -- some of the geometric structures  -- KAM tori, homoclinic connections --  that are relevant in Arnold's mechanism of diffusion, and creates new geometric structures -- attractors -- that act as barriers for diffusion. On the other hand, the addition of forcing can compensate the effects of dissipation.

  \begin{figure}
  \centering
  \subfloat[Phase space of the conservative pendulum\label{fig:pendulum}]{%
      \includegraphics[width=0.33\textwidth]{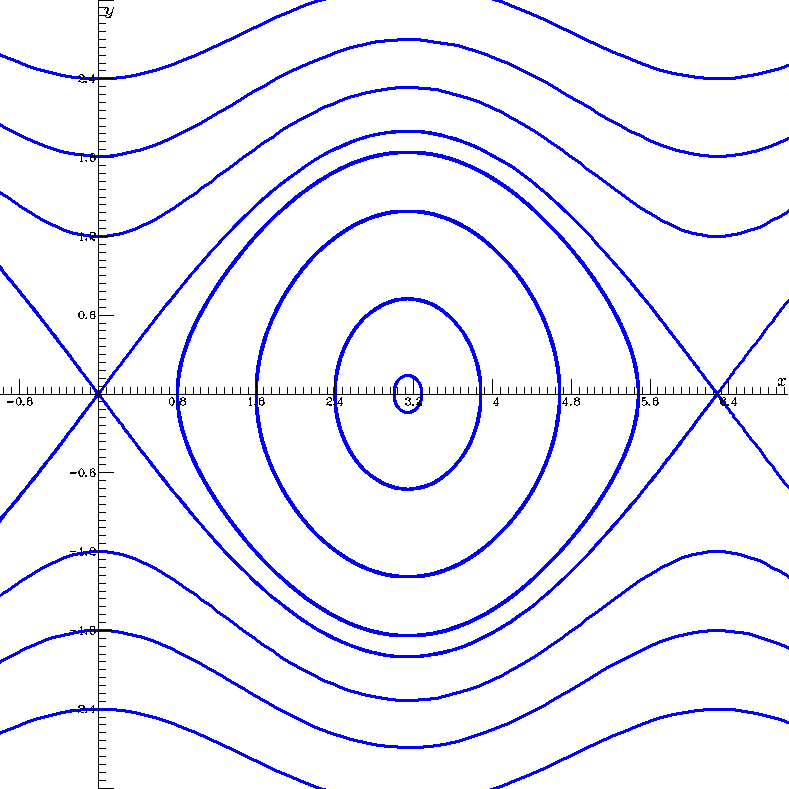}
     }
    \subfloat[Phase space of the dissipative pendulum\label{fig:pendulum_dis}]{%
      \includegraphics[width=0.33\textwidth]{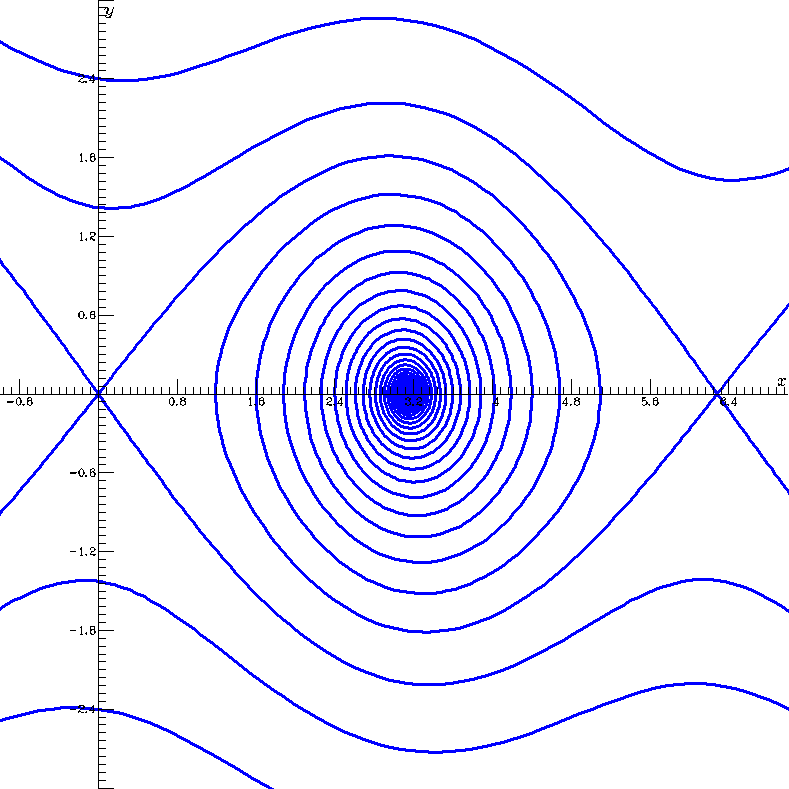}
      }
      \subfloat[Poincar\'e section for the pendulum subject to dissipation and periodic forcing\label{fig:pendulum_dis_forced}]{
    \includegraphics[width=0.33\linewidth]{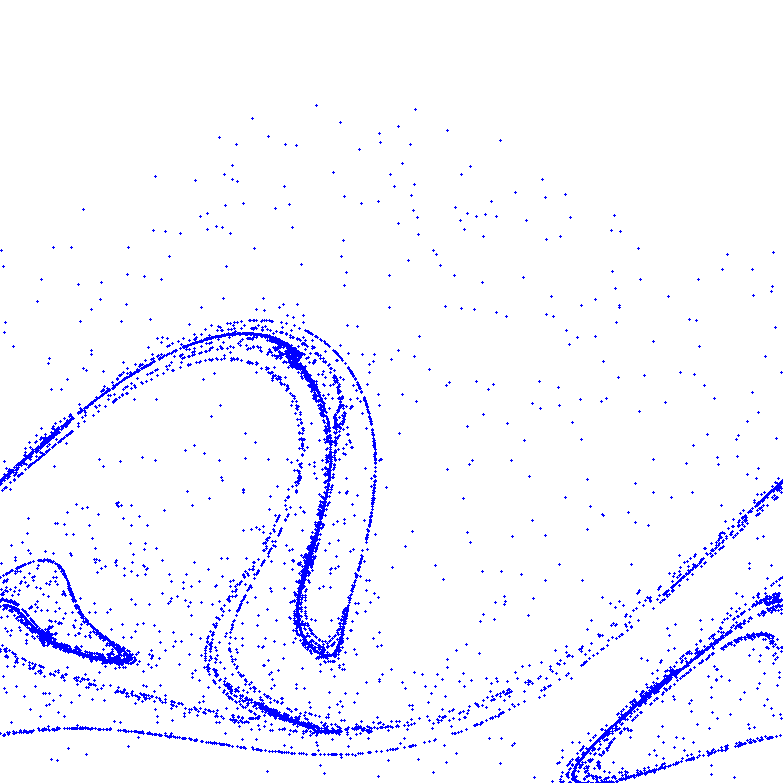}
  }
  \caption{The pendulum}

\end{figure}

\section{Model}\label{sec:model}
The model that we consider is described by an integrable Hamiltonian system subject to a time-dependent, Hamiltonian perturbation (or coupling), and to a second, non-Hamiltonian,  perturbation that is dissipative.

The unperturbed Hamiltonian  corresponds to an uncoupled rotator-pendulum system and is given by
\begin{equation}\label{eqn:H0}
H_0(p,q,I,\theta)= \frac{I^2}{2}+\frac{p^2}{2}+(\cos q-1),\end{equation}
where $(p,q,I,\theta)\in \mathbb{R}\times \mathbb{T}^1\times \mathbb{R}\times \mathbb{T}^1$, which is endowed with the standard symplectic structure
$\omega=dp\wedge dq+dI\wedge d\theta$.

For the rotator part of the Hamiltonian, given by
$h_0(I)=\frac{I^2}{2}$, each level set $I=\textrm{constant}$ is invariant under the flow of $h_0$, and the corresponding dynamics is a rigid rotation of frequency
\begin{equation}\label{eq:frequency}
\omega(I):=\frac{\partial h_0}{\partial I}=I.
\end{equation}

The pendulum part of the Hamiltonian, is given by
\begin{equation}\label{eq:h1}
h_1(p,q)=\frac{p^2}{2}+(\cos q-1),
\end{equation}
it has a hyperbolic fixed point at $(p,q)=(0,0)$ and an elliptic fixed point at  $(p,q)=(0,\pi)$. The stable and unstable manifolds of the hyperbolic fixed point $(0,0)$ coincide, and can be parametrized as
\begin{equation}\label{eqn:separatrix}
(p_0(t),q_0(t))=\left (\pm \frac{2}{\cosh t}, 4 \arctan e^{\pm t}\right).
\end{equation}

Since the system $H_0$ is uncoupled,  $I$ is a conserved quantity and so each hypersurface $\{I=\textrm{const.}\}$ constitutes a barrier for the dynamics of $H_0$: there are no trajectories along which the variable $I$ can change.

When we add the  time-dependent, Hamiltonian perturbation, we have
\begin{equation}\label{eqn:Heps}
H_\eps(p,q,I,\theta,t)=H_0(p,q,I,\theta)+\eps H_1(p,q,I,\theta,t),
\end{equation}
where $t\in\mathbb{T}^1$, meaning that the perturbation $H_1$ is $2\pi$-periodic in time.

We will  assume that $H_1$ is of the form
\begin{equation}\label{eqn:H1}
H_1(p,q,I,\theta,t)=f(q) \cdot g(\theta,t).
\end{equation}

The dissipative perturbation is given by a vector field $\mathcal{X}_\lambda$ that is added to the Hamiltonian vector field $J\nabla H_\eps$ of $H_\eps$,  where
\begin{equation}\label{eqn:Xeps}
\mathcal{X}_\lambda(p,q,I,\theta)=(-\lambda p,0,-\lambda(I-\omega_*),0),
\end{equation}
where $\lambda$  is the dissipation coefficient, and $\omega_*$ is a fixed Diophantine frequency.
For the moment, we will treat $\lambda$ as an independent parameter, but for most of the paper  we will consider $\lambda$ of the form $\lambda=\eps\rho$, with $\rho$ being a sufficiently small independent parameter. In  our main result Theorem \ref{thm:main}  we will use  $\lambda=\eps\rho(\eps)$ where
$0<\rho(\eps)=\frac{\bar \rho}{\log{\frac{1}{\eps}}}\ll 1$, where $\bar \rho$ is a positive constant.

The system of interest is
\begin{equation}\label{eqn:main_system}
  \dot z= J\nabla_zH_0(z)+\eps J\nabla_z H_1(z,t)+\mathcal{X}_{\lambda}(z), \quad z=(p,q,I,\theta).
\end{equation}
We obtain the following equations:
\begin{equation}\label{eqn:evolution}
\begin{cases}
      \dot{p}=\sin(q)-\eps f'(q)\cdot g(\theta,t)-\lambda p,\\
      \dot{q}=p,\\
      \dot{I}=-\lambda(I-\omega_*)-\eps f(q)\cdot\frac{\partial g}{\partial \theta}(\theta,t),\\
      \dot{\theta}=I.
    \end{cases}
\end{equation}

As we shall see, the dissipative perturbation yields the existence of attractors for the dynamics restricted to the NHIM, that is,  the dynamics in the $(I,\theta)$-variables. In particular, we can have attractors  that act as barriers on the NHIM, in the sense that they separate the NHIM into topologically non-trivial connected components.
As all trajectories within the basin of attractors move towards the attractors,  the action $I$ will increase along some trajectories and will decrease along some other trajectories, however there are no trajectories within the NHIM that start on one side of the attractor and end on the other side.

Below, we will consider two concrete examples of Hamiltonian perturbations:

\begin{description}
  \item[Vanishing perturbation]  $H_1$ vanishes at $(p,q)=(0,0)$
  \begin{equation}
\label{eqn:vanishing}
\begin{split}
  f(q)=\cos(q)-1,\\
  g(\theta,t)=a_{00}+a_{10}\cos \theta+a_{01}\cos t.
  \end{split}\end{equation}

  \item[Non-vanishing perturbation]   $H_1$ does not vanish at $(p,q)=(0,0)$
\begin{equation}\label{eqn:non-vanishing}
\begin{split}f(q)=\cos(q),\\
  g(\theta,t)=a_{00}+a_{10}\cos \theta+a_{01}\cos t.\end{split}\end{equation} \end{description}

Above, $a_{00}$, $a_{10}$, and $a_{01}$ are real numbers with $a_{10}a_{01}\neq 0$.

\begin{rem}   The choice of  the coupling  of the form $H_1(p,q,I,\theta,t)=f(q)g(\theta,t)$ has been made in order to deal with a simple model.
The fact that the function $f$ satisfies $f'(0)=0$ implies that the normally hyperbolic invariant manifold, which is exhibited by the unperturbed system,  is not affected by the perturbation; see Section~\ref{sec:NHIM_perturbed}. We do not need to invoke the theory of  persistence of normally hyperbolic invariant manifolds under perturbation.
The function  $g(\theta,t)$ can be viewed as a truncation to the first two harmonics of the Fourier expansion of an analytic function.
We will deal with the general case with infinitely many harmonics, as well as with perturbations that do not preserve the NHIM,  in future work.
\end{rem}
\begin{rem}
We note that instead of  \eqref{eqn:Xeps} we can consider more general perturbations of the form
\[\mathcal{X}_\lambda(p,q,I,\theta)=(-\lambda_1 p,0,-\lambda_2(I-\omega_*),0),\]
for $\lambda_1=\eps \bar \rho_1$ and $\lambda_2=\eps\frac{\bar\rho_2}{\log{\frac{1}{\eps}}}$ with $\bar\rho_1,\bar\rho_2>0$.
One  will be able to  see that the arguments below also apply to this case
and therefore, the main result, stated below, remains valid.
\end{rem}

\section{Main result}

\begin{thm}\label{thm:main}
Consider the Hamiltonian  system  \eqref {eqn:H0} subject to the time-periodic Hamiltonian perturbation  \eqref {eqn:vanishing} or \eqref{eqn:non-vanishing},
and to the dissipative perturbation  \eqref{eqn:Xeps}, with dissipation coefficient $\lambda=\eps\rho(\eps)=\eps\frac{\bar \rho}{\log (\frac{1}{\eps})}$ with $\bar \rho >0$ suitably small.
\\
Then there exist $0<I_1<I_2$ and $\eps_0>0$ such that, for every $\omega_*$ Diophantine  number with
$0<I_1<\omega_*<I_2$,
and
every $0<\eps<\eps_0$,
there exist pseudo-orbits $z(t)$, $t\in [0,T]$, such that
\[
I(z(0))<I_1\textrm{ and } I(z(T))>I_2.
\]
Here by a pseudo-orbit $z(t)$ we mean a finite collection of trajectories  $z^i(t)$, ${t\in [t_i,t_{i+1}]}$  of \eqref{eqn:main_system},
for some times $0=t_0<t_1<\ldots<t_m=T$,  where $m>0$, such that
\begin{equation*}\begin{split}
I(z^0(0))<I_1\textrm{ and } I(z^m(T))>I_2,\\
d(z^i(t_{i+1}),z^{i+1}(t_{i+1}))<\delta(\eps), \textrm { for } i=0,\ldots,m-1,
\end{split}\end{equation*}
for some  $\delta(\eps)=O(\eps)>0$.

The  diffusion time along the pseudo-orbit $z(t)$, ${t\in [0,T]}$,  is $T=O\left(\frac{1}{\eps}\log(\frac{1}{\eps})\right)$.

\end{thm}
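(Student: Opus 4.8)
The plan is to construct the pseudo-orbit by alternating between two kinds of dynamics, exactly in the spirit of the geometric mechanism of \cite{GideaLlaveSeara20-CPAM}, but now adapted to the conformally symplectic setting. First I would establish the skeleton: because $f'(0)=0$, the plane $\{p=q=0\}$ persists as a $3$-dimensional NHIM $\tilde\Lambda$ for the full system \eqref{eqn:main_system}, with the inner dynamics given by $\dot I=-\lambda(I-\omega_*)$, $\dot\theta=I$, $\dot t=1$; this is the content of Section~\ref{sec:NHIM_perturbed} which I may assume. On $\tilde\Lambda$ the circle $\{I=\omega_*\}$ (in the $(I,\theta,t)$ picture, a $2$-torus) is a global attractor for the inner flow, and it is precisely the barrier I must cross. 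The key point is that the inner flow, although it contracts $I$ towards $\omega_*$ exponentially at rate $\lambda$, is \emph{slow}: over a time interval of length $\Delta t$ the action $I$ changes by only $O(\lambda\,\Delta t)$. Meanwhile an excursion along a homoclinic orbit to $\tilde\Lambda$, followed by the outer (scattering) map, produces a change in $I$ of size $O(\eps)$ with a sign I can choose (this is where the two harmonics $a_{10}\cos\theta$, $a_{01}\cos t$ and the Melnikov/Poincar\'e-Mel'nikov computation for the separatrix \eqref{eqn:separatrix} enter, yielding a scattering map whose $I$-component is $I\mapsto I+\eps\mu(\theta,t)+O(\eps^2)$ with $\mu$ genuinely oscillating). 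The homoclinic excursion takes time $O(\log(1/\eps))$, since one must wait for the orbit to leave and return to an $O(\eps)$-neighborhood of the NHIM at the hyperbolic rate $O(1)$.

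With these two building blocks in hand, the construction is a bookkeeping argument. Starting from a point with $I<I_1$, I alternate: (i) flow along the inner dynamics for a controlled time to position $(\theta,t)$ in the region where $\mu>0$ (equivalently where the scattering map increases $I$); (ii) jump to a homoclinic orbit — this costs a gluing error $\delta(\eps)=O(\eps)$, which is exactly the pseudo-orbit tolerance allowed in the statement — and apply the scattering map to gain $+\eps\mu + O(\eps^2)\ge c\eps$ in $I$ for some $c>0$. The crucial inequality is that the net gain per cycle beats the inner-dynamics loss: during one cycle, of total duration $O(\log(1/\eps))$, the inner drift towards $\omega_*$ costs at most $O(\lambda\log(1/\eps)) = O\!\big(\eps\rho(\eps)\log(1/\eps)\big) = O(\eps\bar\rho)$, which is strictly smaller than $c\eps$ once $\bar\rho$ is chosen small. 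Hence each cycle produces a net increase $\ge (c-O(\bar\rho))\eps =: c'\eps > 0$ in $I$, uniformly while $I$ stays in a bounded window $[I_1-1, I_2+1]$. Therefore after $m = O\!\big((I_2-I_1)/(c'\eps)\big) = O(1/\eps)$ cycles the action has moved from below $I_1$ to above $I_2$, and the total time is $T = m\cdot O(\log(1/\eps)) = O\!\big(\tfrac1\eps\log\tfrac1\eps\big)$, matching the claimed diffusion time. The choice $\rho(\eps)=\bar\rho/\log(1/\eps)$ is precisely what makes the per-cycle dissipative loss $O(\lambda\log(1/\eps))$ comparable to — and, for small $\bar\rho$, dominated by — the per-cycle Melnikov gain $O(\eps)$.

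I would organize the write-up as: (1) normal form and persistence of the NHIM and its stable/unstable manifolds (already set up earlier); (2) computation of the inner dynamics and identification of the attracting torus as the barrier, with the elementary estimate $|I(t)-I(0)|\le |I(0)-\omega_*|(1-e^{-\lambda t}) + \text{forcing} = O(\lambda t)$ away from the attractor; (3) the outer/scattering map via a Melnikov integral along \eqref{eqn:separatrix}, showing its $I$-component oscillates with amplitude $\asymp \eps$ and that the excursion time is $O(\log(1/\eps))$ with an $O(\eps)$ reattachment error; (4) the shadowing/concatenation lemma producing the pseudo-orbit, with the net-gain-per-cycle inequality and the count of cycles.

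The main obstacle I anticipate is controlling the \emph{outer dynamics in the presence of dissipation}: in the Hamiltonian case one leans on the symplectic (hence volume-preserving, Poincar\'e-recurrent) structure both to make the scattering map well-defined on a large domain and to return near the NHIM, but here the stable and unstable manifolds of $\tilde\Lambda$ need not intersect for $\lambda>0$ — precisely the phenomenon illustrated by the dissipative pendulum (Fig.~\ref{fig:pendulum_dis}). The resolution must be quantitative: one shows that for $\lambda = O(\eps/\log(1/\eps))$ the splitting \emph{distance} between $W^u(\tilde\Lambda)$ and $W^s(\tilde\Lambda)$ created by dissipation is of order $O(\lambda) = o(\eps)$, strictly smaller than the $O(\eps)$ transversal splitting produced by the Hamiltonian coupling $\eps H_1$; hence the intersection survives and the scattering map is still defined on the relevant domain, with dissipative corrections absorbed into the $O(\eps^2)$ (or $o(\eps)$) remainder. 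Making this comparison rigorous — i.e. a careful two-parameter perturbative analysis in $(\eps,\lambda)$ of the homoclinic channel, showing the Hamiltonian-driven splitting dominates the dissipation-driven gap — is the technical heart of the argument and the step I would expect to occupy the bulk of the paper.
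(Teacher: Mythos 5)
Your outline reproduces the paper's mechanism for the \emph{vanishing} coupling \eqref{eqn:vanishing}: exact persistence of the NHIM because $f'(0)=0$, a Melnikov computation along \eqref{eqn:separatrix} in which the dissipative contribution (a term $\rho\int p_0^2\,dt$, of size $O(\lambda)=O(\eps\rho)$) is dominated by the $O(\eps)$ Hamiltonian splitting so the transverse homoclinic channel and the scattering map survive, a strip where the scattering map gains $\ge c\eps$ in $I$, a per-cycle dissipative loss $O(\lambda\log(1/\eps))=O(\bar\rho\,\eps)$ beaten by choosing $\bar\rho$ small, $O(1/\eps)$ cycles of duration $O(\log(1/\eps))$, and $O(\eps)$ gluing errors turning IFS orbits into pseudo-orbits. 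All of this matches Sections \ref{sec:existence_transverse}--\ref{sec:proofs} of the paper, including your identification of the $(\eps,\lambda)$ two-parameter splitting comparison as the point where dissipation could destroy the homoclinic intersection.

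However, there is a genuine gap: your bookkeeping assumes throughout that the inner dynamics on the NHIM is the pure dissipative rotator $\dot I=-\lambda(I-\omega_*)$, $\dot\theta=I$, so that the only change in $I$ between scattering jumps is the $O(\lambda\,\Delta t)$ drift toward $\omega_*$. This is true only for \eqref{eqn:vanishing}. For the non-vanishing coupling \eqref{eqn:non-vanishing}, which the theorem also covers, $H_1=\cos q\cdot g(\theta,t)$ does not vanish on $\{p=q=0\}$, and the restricted dynamics is \eqref{eqn:Lambda_eps_nonvan}: $\dot I=-\lambda(I-\omega_*)-\eps\,\partial g/\partial\theta$, a forced, damped pendulum in $(\theta,I)$. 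Then $\{I=\omega_*\}$ is not invariant, and the conservative part of the inner dynamics alone can move $I$ by up to $O(\eps^{1/2})$ along level sets of $K(I,\theta)=\tfrac{I^2}{2}+\eps a_{10}(\cos\theta-1)$, which swamps the $O(\eps)$ gain per application of the scattering map; your ``net gain per cycle'' inequality in the variable $I$ therefore fails in this case. The paper's resolution (Section~\ref{sec:proof_case2}) is to run the gain/loss accounting on the energy $K$ of \eqref{eqn:K_Hamiltonian} rather than on $I$: $K$ is exactly conserved by the $\lambda=0$ inner flow \eqref{eq:model20}, the dissipative inner flow changes it by at most $O(1-e^{-\lambda t})=O(\bar\rho\,\eps)$ over times $O(\log(1/\eps))$ (Lemma~\ref{lem:K_change}, via a Gronwall-type comparison of the $\lambda$ and $\lambda=0$ solutions), and the scattering map increases $K$ by $I_0(I^+_\eps-I^-_\eps)+O(\eps^2)\ge c\eps$ (Lemma~\ref{lem:scattering_K}); since level sets of $K$ are $O(\eps^{1/2})$-close to level sets of $I$, a net growth of $O(1)$ in $K$ still carries the pseudo-orbit from below $I_1$ to above $I_2$. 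Without this change of monitored quantity (or some equivalent averaging/normal-form step), your argument proves the theorem only for the vanishing perturbation.
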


Above, we used the notation   $f=O(g )=O_{\mathcal{C}^k}(g )$ for a pair of functions $f$, $g$ satisfying $\|f\|_{\mathcal{C}^k}\leq M\|g\|_{\mathcal{C}^{k}}$ for some  $M>0$, where $\|\cdot\|_{\mathcal{C}^k}$ is the ${\mathcal{C}^k}$-norm for some suitable $k\geq 0$.

We illustrate the phenomenon described by Theorem \ref{thm:main} in Fig.~\ref{fig:sym1}, Fig.~\ref{fig:sym2}, Fig.~\ref{fig:sym4}, Fig.~\ref{fig:sym3}.  In Fig.~\ref{fig:sym1}, using the inner dynamics alone, orbits with $I(0)<\omega_*$ cannot pass beyond the attractor shown in Fig.~\ref{fig:sym2}.
However, using both the inner and outer dynamics, there are orbits with $I(0)<\omega_*$ that end up with $I(T)>\omega_*$, as shown in  Fig.~\ref{fig:sym3}.
These orbits move close to the
separatix of the pendulum $h_1(q,p)=0$,
as shown in Fig.~\ref{fig:sym4}.

 \begin{figure}
  \centering
  \subfloat[Inner dynamics  \label{fig:sym1}]{%
      \includegraphics[width=0.35\textwidth]{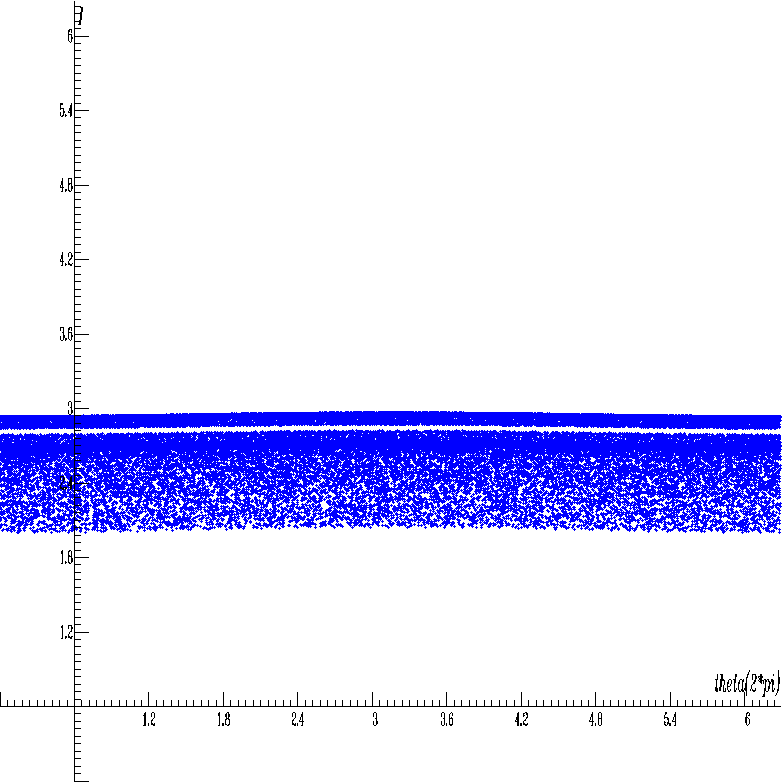}
    }
    \hfill
    \subfloat[Attractor for the inner  dynamics\label{fig:sym2}]{%
      \includegraphics[width=0.35\textwidth]{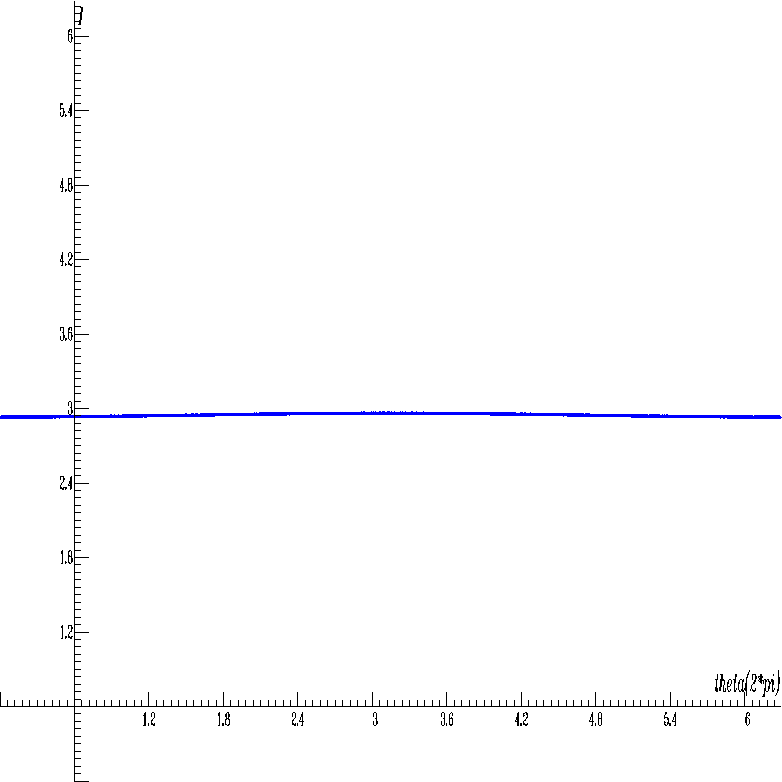}
    }\\
  \subfloat[Outer dynamics\label{fig:sym4}]{%
      \includegraphics[width=0.35\textwidth]{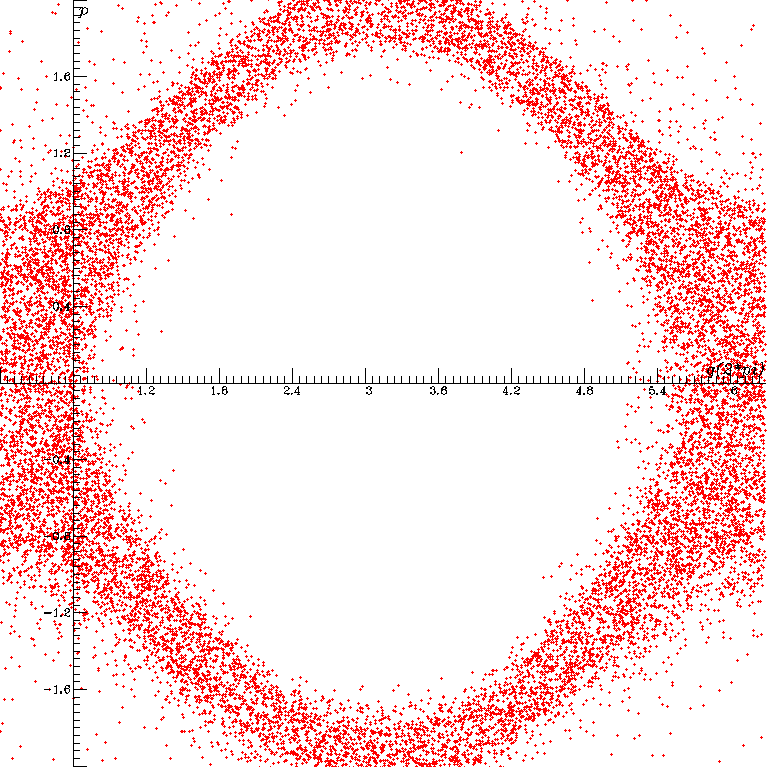}
    }
    \hfill
    \subfloat[Combined  inner and outer dynamics\label{fig:sym3}]{%
      \includegraphics[width=0.35\textwidth]{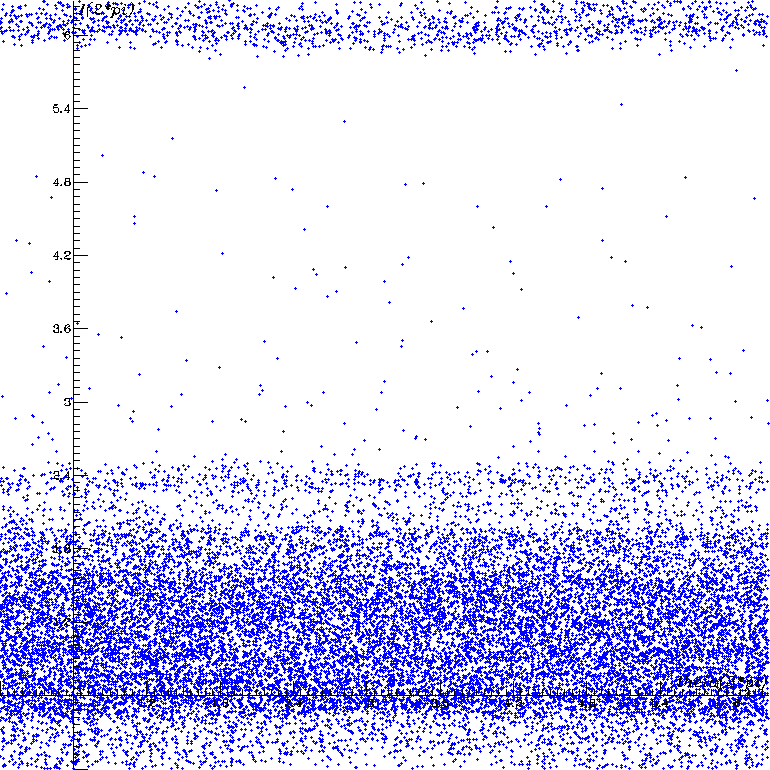}
    }
    \caption{Orbits of the rotator-pendulum system}
  \end{figure}

Theorem \ref{thm:main} gives us diffusing pseudo-orbits.  Applying a Shadowing Lemma type of results similar to those in \cite{Zcc,gelfreich2017arnold,GideaLlaveSeara20-CPAM,capinski2018arnold}, we will be able to show that there exist true orbits $z(t)$ of \eqref{eqn:ode} such that
$I(z(0))<I_1$ and $I(z(T))>I_2$. We leave the technical details for a future work.

The above pseudo-orbits are such that the end-point of one is $\delta(\eps)$-close to the starting-point of the next one, where $\delta(\eps)=O(\eps)$. We remark here that we can  also obtain pseudo-orbits with $\delta(\eps)=O(\eps^p)$, for any $p\geq 1$, with the same diffusion time order $T=O\left(\frac{1}{\eps}\log(\frac{1}{\eps})\right)$.

In practical applications one can pass from pseudo-orbits to true orbits by applying small controls; for example, in the case of artificial satellites perturbed by  atmospheric drag, the small controls can be satellite maneuvers.

\begin{rem}
In Theorem \ref{thm:main}, the action levels $I_1$ and $I_2$ can be chosen explicitly, depending on the Hamiltonian perturbation  \eqref {eqn:vanishing} or \eqref{eqn:non-vanishing} that is considered. See Section~\ref{sec:scattering_vanishing_non-vanishing}.

The condition on choosing  $\omega_*$ a Diophantine  number between $I_1$ and $I_2$ is not necessary for the proof of the theorem; see Section~\ref{sec:proof_case2}. The reason for requiring
this condition is to be able to apply the KAM theorem for conformally symplectic systems \cite{calleja2020kam}, which implies the existence of a KAM torus that is an attractor for the inner dynamics, and hence represents a barrier for the inner dynamics. In other words, we want to show that diffusing pseudo-orbits exist even if there is a barrier inside the NHIM.

The choice of the dissipation coefficient $\lambda=\eps\frac{\bar \rho}{\log (\frac{1}{\eps})}$ is related to the time
$T_h=O(\log (\frac{1}{\eps}))$ required for a point starting in an $\eps$-neighborhood of the NHIM  to travel along a homoclinic orbit and arrive in an $\eps$-neighborhood of the NHIM.
This choice of $\lambda$ implies that $\lambda\cdot T_h=O(\bar \rho \eps)$, while the order of the change in action by the scattering map is $O(\eps)$. By choosing a suitable small enough constant $\bar \rho$, we can ensure
that, when the  growth in $I$ by the scattering map competes with the decay in $I$ by the dissipation, which is of order $O(\lambda T_h)=O(\bar \rho \eps)$,
we will make  the former to win against the latter.

If we do not impose that the homoclinic orbits get $\eps$-close to the NHIM, then we can choose a shorter time $T_h$ along the homoclinic orbits and
implicitly a larger $\lambda$, as long as $\lambda\cdot T_h$ is $O(\eps)$; for example, we can choose $T_h=O(1)$ and $\lambda=\bar\rho\eps$ for some $\bar\rho>0$ suitably small.
\end{rem}

\section{Preliminaries}

\subsection{Extended system}

Since the perturbation $H_1$ is time-dependent, it is convenient to consider time as an independent variable $t$ and to work in the extended phase space
$\tilde{z}=(p,q,I,\theta,s) \in \mathbb{R}\times \mathbb{T}^1\times \mathbb{R}\times \mathbb{T}^1 \times \mathbb{T}^1$, adding the equation $\dot s=1$ to system \eqref{eqn:evolution} to obtain:
\begin{equation}\label{eqn:extended_main_system}
\begin{array}{rcl}
  \dot z &=& J\nabla_zH_0(z)+\eps J\nabla_z H_1(z,s)+\mathcal{X}_{\lambda}(z), \quad z=(p,q,I,\theta)\\
  \dot s &=& 1
  \end{array}
\end{equation}
We denote by  $\tilde{\Phi}^t_0$ the unperturbed extended flow, and by $\tilde{\Phi}^t_\eps$ the perturbed extended flow.

\subsection{Normally hyperbolic invariant manifolds}
We briefly recall the notion of a normally hyperbolic invariant manifold (NHIM) \cite{Fenichel74,HirschPS77}.

Let $M$ be a $\mathscr{C}^r$-smooth manifold, $\Phi^t$ a $\mathscr{C}^r$-flow on $M$. A submanifold  (with or without boundary) $\Lambda$ of $M$ is a  normally hyperbolic invariant manifold (NHIM) for $\Phi^t$ if  it is invariant under $\Phi^t$, and there exists a splitting of the tangent bundle of $TM$ into sub-bundles over $\Lambda$
\begin{equation}
\label{eqn:NHIM_splitting}
T_z M=E^{\un}_z \oplus E^{\st}_z \oplus T_z \Lambda, \quad \forall z \in \Lambda
\end{equation}
that are invariant under $D\Phi^t$ for all $t\in\mathbb{R}$, and there exist  rates
\[\lambda_-\le \lambda_+<\lambda_c<0<\mu_c<\mu_-\le \mu_+\]
and a constant ${C}>0$, such that for all $x\in\Lambda$ we have
\begin{equation}
\label{eqn:NHIM_rates}
\begin{split} {C}e^{t\lambda_- }\|v\| \leq \|D\Phi^t(z)(v)\|\leq  {C}e^{t\lambda_+}\|v\|  \textrm{ for all } t\geq 0, &\textrm{ if and only if } v\in E^{\st}_z,\\
{C}e^{t\mu_+ }\|v\|\leq \|D\Phi^t(z)(v)\|\leq  {C}e^{t\mu_- }\|v\|  \textrm{ for all } t\leq 0,  &\textrm{ if and only if }v\in E^{\un}_z,\\
{C}e^{|t|\lambda_c }\|v\|\leq  \|D\Phi^t(z)(v)\|\leq  {C}e^{|t|\mu_c}\|v\| \textrm{ for all } t\in\mathbb{R}, &\textrm{ if and only if }v\in T_z\Lambda.
\end{split}
\end{equation}

It is known that  $\Lambda$ is $\mathscr{C}^{\ell}$-differentiable, with $\ell\leq r-1$,   provided that
\begin{equation}\label{eqn:ratesdifferentiable}
\begin{split}
& \ell {\mu}_c  + {\lambda}_+ < 0, \\
& \ell {\lambda}_c +  {\mu}_- > 0.
\end{split}
\end{equation}

The manifold  $\Lambda$ has associated  unstable and stable manifolds,
denoted $W^{\un}(\Lambda)$ and $W^{\st}(\Lambda)$, which are tangent to
$E^\un_{\Lambda}$ and  $E^\st_{\Lambda}$ respectively, and  $\mathscr{C}^{\ell-1}$-differentiable.
They are foliated by $1$-dimensional unstable and stable manifolds (fibers) of points,
$W^{\un}(z)$, $W^{\st}(z)$, $z\in\Lambda$,  respectively, which are as smooth as the flow,
i.e., $\mathscr{C}^r$-differentiable.
These fibers are equivariant in the sense that
\begin{equation*}\begin{split}
\Phi^t(W^\un(z))&=W^\un(\Phi^t(z)),\\
\Phi^t(W^\st(z))&=W^\st(\Phi^t(z)).
\end{split}\end{equation*}

\subsection{The  NHIM of the unperturbed system}
\label{sec:NHIM_unperurbed}

We now describe the geometric structures for the unperturbed system corresponding to $\eps=0$ and $\lambda=0$.
Fix $0<I_1<I_2$.

The unperturbed system $H_0$ has a NHIM:
\[
\Lambda_0=\{(0,0,I,\theta)\,|\,I\in[I_1,I_2],\,\theta\in\mathbb{T}^1\}.
\]
The flow restricted to $\Lambda_0$ corresponds to the equations of the rotator subsystem:
\begin{equation}\begin{cases}\label{eqn:Lambda_0}
      \dot{I}=0\\
      \dot{\theta}=I\\
    \end{cases}
\end{equation}
Hence every level set $I=\textrm{const.}$ is invariant under the flow.
The stable and unstable manifolds of the NHIM $\Lambda_0$ coincide, that is
\[
W^{\st}(\Lambda_0)=W^{\un}(\Lambda_0)=\{(p,q,I,\theta)\,|\,
h_1(p,q)=0, \, I\in[I_1,I_2],\,\theta\in\mathbb{T}^1\}.
\]
where $h_1$ is the Hamiltonian of the pendulum in \eqref{eq:h1}.
The contraction/expansion rates along $E^\st$ and $E^\un$ are $\mp1$, respectively.
For the time-$2\pi$ map, the corresponding contraction/expansion rates are $e^{\mp 2\pi}$.

In the extended phase space, we have that $\tilde{\Lambda}_0=\Lambda_0\times \mathbb{T}^1$ is a NHIM, and
\[
W^\st(\tilde{\Lambda}_0)=W^\st(\Lambda_0)\times \mathbb{T}^1=W^\un(\Lambda_0)\times \mathbb{T}^1=W^\un(\tilde{\Lambda}_0)
\]
for system \eqref{eqn:extended_main_system}.

Note that $\Lambda_0$ is also the NHIM for the time-$2\pi$ map $f_0$ of the extended flow $\tilde{\Phi}^t_0$, which represents the first-return map to the Poincar\'e section
\[
\Sigma=\{(p,q,I,\theta,s) \,|\, s=0\}.
\]

\subsection{The inner map of the unperturbed system}\label{sec:NHIM_perturbed}
Now, let us consider  the time-$2\pi$ map for the Hamiltonian flow of the rotator:
$$
\begin{cases}
      \dot{I}=0,\\
      \dot{\theta}=I.\\
    \end{cases}
$$
Solving, we have $I(t)=I_0$ and $\theta(t)=\theta_0+I_0t$, which gives the time-$2\pi$-map $f_0$:
\begin{equation}\label{eq:f0}
f_0(I,\theta)=(I',\theta')=(I,\theta+ 2\pi I).
\end{equation}
Note that $f_0$  satisfies the twist condition
\begin{equation}\label{eqn:twist_unperturbed}\frac{\partial \theta'}{\partial I}=2\pi>0.
\end{equation}

\subsection{The model with small dissipation}
\label{sec:modeldis}
From  now on we will work with small dissipation. We will assume
\begin{equation}\label{eq:lambdaeps}
\lambda=\eps\rho,
\end{equation}
where  $\rho$ is  a free parameter.
Consequently,  the vector field \eqref{eqn:main_system} can be written as
\begin{equation}\label{eq:perturbedepsilon}
\dot{z}=\X^0(z)+\eps\X^1(z,t;\rho)
\end{equation}
with
$\X^0(z)= J\nabla H_0(z)$ is the unpertubed system \eqref{eqn:H0}, and
\begin{equation}\label{eq:pertubedfieldepsilon}
\X^1  (z,t;\rho)=J\nabla H_1(z,t)+ \X_{\rho },
\end{equation}
with $H_1$ given in \eqref{eqn:H1} and $\X_{\rho}$ given in \eqref{eqn:Xeps}.
Even when  we use  $\lambda$ in the notation, we always assume that $\lambda=\eps\rho$.

\subsection{The NHIM in the case of vanishing perturbation}
In the case when the perturbation  $H_1$  is of the form
\begin{equation*}
H_1(p,q,I,\theta,s)=(\cos q-1)  \cdot g(\theta,s),
\end{equation*}
then $H_1$ vanishes at $(p,q)=(0,0)$.
When  $\eps H_1$ is added to $H_0$,
the NHIM $\tilde{\Lambda}_0$ persists as $\tilde{\Lambda}_\eps=\tilde{\Lambda}_0$  for the perturbed system for $\eps>0$, and the flow restricted to the NHIM is given by \eqref{eqn:Lambda_0}. Consequently, each level set $\{I=\textrm{constant}\}$ in the NHIM persists.

When we add  the  dissipation $\X_\lambda$, where $\lambda=\eps\rho$,  since the $(p,q)$-components of $\X_\lambda$ vanish at $(p,q)=(0,0)$, then
the NHIM   survives  for the perturbed system \eqref{eqn:main_system} as $\tilde{\Lambda}_\eps=\tilde{\Lambda}_0$
(if we consider $\lambda$ as an independent parameter, then the perturbed NHIM in general depends on both $\eps$ and $\lambda$).
The induced dynamics on $\tilde{\Lambda}_\eps=\tilde{\Lambda}_0$ is given by
\begin{equation}
\begin{cases}\label{eqn:Lambda_0_X}
      \dot{I}=-\lambda(I-\omega_*)\\
      \dot{\theta}=I\\
      \dot s =1
\end{cases}
\end{equation}
Note that $\dot I=0 \Leftrightarrow   I=\omega_*$.
It follows that
\begin{equation}\label{eq:torus}
\tilde{A}_\eps=\lbrace (I,\theta,s), \ I=\omega_*, \ (\theta,s)\in\mathbb{T}^2 \rbrace \subset \tilde \Lambda_\eps
\end{equation}
is a $2$-dimensional torus invariant under the flow restricted to $\tilde{\Lambda}_\eps$.
This is the only invariant torus for the  flow on $\tilde{\Lambda}_\eps$. On $\tilde{A}_\eps$ we have $\dot{\theta}=I=\omega_*$ and $\dot{s}=1$, so the flow along this level set is a linear flow with frequency vector $(\omega_*,1)$.

By integration of  \eqref{eqn:Lambda_0_X}, we obtain the general solution with initial condition $(I_0,\theta_0,s_0)$ as:
\begin{equation}\label{eq:generalsolution}
\begin{split}
 I(t)&=(I_0-\omega_*)e^{-\lambda t}+\omega_*,\\
 \theta(t)&=\theta_0+\frac{1}{\lambda}(I_0-\omega_*)(1-e^{-\lambda t})+\omega_*t,\\
 s(t) &=s_0+t.
 \end{split}
\end{equation}
Using these explicit formulas, one can see that given $(\omega_*,\theta_0,s_0)\in\tilde{A}_\eps$, if we consider
$(I,\theta,s)\in\tilde\Lambda_\eps$, where $s=s_0$, $\theta=\theta_0-\frac{1}{\lambda}(I-\omega_*)$, then

\begin{equation}\label{eqn: asymptotics_flow}
  \|\tilde{\Phi}^t_\eps(I,\theta,s_0)-\tilde{\Phi}^t_\eps(\omega_*,\theta_0,s_0)\|\leq \left (1+\frac{1}{\lambda^2}\right)^{1/2}\left| I-\omega_*\right|e^{-\lambda t}\rightarrow 0 \textrm{ as }t\to \infty,
\end{equation}
showing that $\tilde{A_\eps}$ is a global attractor for the flow on $\tilde{\Lambda}_\eps$. (Above we also denoted by $\tilde{\Phi}^t_\eps$ the flow restricted to  $\tilde{\Lambda}_\eps$.)


\subsection{The inner map in the case of vanishing perturbation}
\label{sec:A_eps}

From the explicit solutions of $I(t)$ and $\theta(t)$ in \eqref{eq:generalsolution} with $t=2\pi$, we have that
\begin{equation}\label{eqn:inner-vanishing}
f_\eps( I,\theta)=\bigg((I-\omega_*)e^{-2\pi\lambda}+\omega_*,\,
\theta+\frac{1}{\lambda}(I-\omega_*)(1-e^{-2\pi\lambda})+2\pi\omega_* \bigg),
\end{equation}
which is the first-return map to the section  $\Lambda_\eps=\tilde{\Lambda}_\eps\cap\{s=0 \, (\textrm{mod}\, 2\pi)\}$.

In particular, for  $I=\omega_*$ we have
$f_\eps(I,\theta)=(\omega_*,\theta+2\pi\omega_*)=(I,\theta+2\pi\omega_*)$.
That is,
$A_\eps=\tilde{A}_\eps\cap\{s=0\, (\textrm{mod}\, 2\pi)\}$ is an invariant circle for $f_\eps$  of irrational rotation number $2\pi\omega_*$.

From \eqref{eqn: asymptotics_flow} we have that, for $(I,\theta)$ with
$\theta=\theta_0-\frac{1}{\lambda}(I-\omega_*)$,
\begin{equation}\label{eqn: asymptotics_map}
 \|f^k_\eps(I,\theta)-f^k_\eps(\omega_*,\theta_0)\|\leq \left (1+\frac{1}{\lambda^2}\right)^{1/2}\left| I-\omega_*\right|e^{-2\pi\lambda k}\rightarrow 0 \textrm{ as }k\to \infty.
\end{equation}
This shows that $A_\eps$ is a global attractor for the map $f_\eps$ on $\Lambda_\eps$, and, moreover, the orbits of $(I,\theta)$ and $(\omega_*,\theta_0)$ become asymptotically close to one another as $n\to\infty$.

We have
\begin{equation*}
Df_\eps(I,\theta) =
\begin{pmatrix}
e^{-2\pi\lambda} & 0\\
\frac{1}{\lambda}(1-e^{-2\pi\lambda}) &1
\end{pmatrix}
\end{equation*}
with eigenvalues $e^{-2\pi\lambda}$ and $1$ and with corresponding eigenvectors
\begin{equation*}
\begin{pmatrix}
-\lambda  \\
1
\end{pmatrix}
\textrm{ and }
\begin{pmatrix}
0  \\
1
\end{pmatrix}
\end{equation*}
respectively.
The eigenvalue  $1$ is associated to the dynamics along the $\theta$-coordinate, and the eigenvalue of $e^{-2\pi\lambda}<1$ is associated to the dynamics along the $I$-coordinate.

We conclude that
$A_\eps$ is a NHIM for $(f_\eps)_{\mid\Lambda_\eps}$, for which there is only stable manifold $W^\st(A_\eps)$ tangent to
$\begin{pmatrix}
-\lambda  \\
1
\end{pmatrix}$,
and no unstable manifold $W^\un(A_\eps)$.
We note that for $\lambda\gtrsim 0 $ (recall that $\lambda=\eps\rho$) the Lyapunov multipliers  $1$ and $e^{-2\pi\lambda}\lesssim 1$ for $(f_\eps)_{\mid\Lambda_\eps}$,
are dominated by the contraction rate of $Df_\eps$ on the stable bundle $E^\st$ of $\Lambda_\eps$, which is $e^{-2\pi+O(\lambda)}$; see Section \ref{sec:NHIM_unperurbed}.

Since $|\det(Df_\eps)|=e^{-2\pi\lambda}<1$ we have that $f_\eps$ is area-contracting on $\Lambda_\eps$, hence it is conformally symplectic, i.e.
\begin{equation}\label{eqn:conformally_symplectic}
  (f_\eps)_{\mid\Lambda_\eps}^*(\omega_{\mid\Lambda_\eps})=e^{-2\pi\lambda} \omega_{\mid\Lambda_\eps}.
\end{equation}

We now show that $f_\eps$ is a $\lambda$-perturbation of $f_0$, a time-$2\pi$ map for the rotator part of the unperturbed system given in \eqref{eq:f0}.
Since
\begin{equation*}
\begin{split}
e^{-2\pi \lambda}=&1-2\pi\lambda +O(\lambda^2),\end{split}\end{equation*}
we have
\begin{equation*}\begin{split}
f_\eps(I,\theta)=&\left(I-2\pi\lambda(I-\omega_*)+O(\lambda^2),\,\theta+2\pi I-{2\pi^2\lambda}(I-\omega_*)+O(\lambda^2)\right).
\end{split}\end{equation*}

Therefore $f_\eps$ is a $\lambda$-perturbation of $f_0$ in \eqref{eq:f0}, i.e.
\begin{equation*}
\begin{split}
f_\eps(I,\theta)=&f_0(I,\theta)+O(\lambda)= f_0(I,\theta)+O(\eps\rho).
\end{split}
\end{equation*}

\subsection{The case of non-vanishing perturbation}
In this case the time-periodic perturbation of the Hamiltonian in \eqref{eqn:Heps}, is of the form
\begin{equation}\label{eqn:H1bis}
H_1(p,q,I,\theta,s)=\cos q  \cdot g(\theta,s).
\end{equation}
The perturbation $H_1$ does not vanish  at the hyperbolic fixed point of the pendulum $(p,q)=(0,0)$.
The dissipative perturbation is given by the vector field $\eps \mathcal{X}_\rho$, where $\mathcal{X}_\rho$ is given by \eqref{eqn:Xeps}, as before (see \eqref{eq:perturbedepsilon} and \eqref{eq:pertubedfieldepsilon}).

From \eqref{eqn:evolution}, since $f'(q)=-\sin q$ vanishes at $q=0$, we obtain that the unperturbed NHIM $\tilde{\Lambda}_0$ survives the perturbation,
that is $\tilde{\Lambda}_\eps=\tilde{\Lambda}_0$ for all $\eps$.

When $\lambda\neq 0$, the perturbed dynamics  restricted to $\tilde{\Lambda}_\eps=\tilde{\Lambda}_0$ is given by the following equations:
\begin{equation}\label{eqn:Lambda_eps_nonvan}
\begin{cases}
      \dot{I}=-\lambda(I-\omega_*)-\eps \frac{\partial g}{\partial \theta}(\theta,s)\\
      \dot{\theta}=I\\
      \dot{s}=1.
    \end{cases}
\end{equation}
Using the expression of $g$ in \eqref{eqn:non-vanishing}, this system can be reduced to the second-order nonlinear differential equation
\[\ddot \theta+\lambda\dot\theta-\eps a_{10}\sin\theta-\lambda\omega_*=0.\]
Ignoring the last term, the remaining terms represent the equation of the damped non-linear pendulum, for which explicit solutions are unknown; an analytical approximation can be found in \cite{johannessen2014analytical}.
Hence, we do not have an explicit formula for the time-$2\pi$ map $f_\eps$ in this case.

\section{Existence of a Transverse Homoclinic Intersection}
\label{sec:existence_transverse}

In the sequel,  we will identify vector fields  with
differential operators, which is a standard operation in differential geometry (see, e.g., \cite{BurnsG05}).
That is,  given a smooth vector field $\X$ and a smooth function $f$ on the manifold $M$, we denote:
\begin{equation}
\label{eqn:vf_derivative}(\X  f)(z) = \sum_j (\X)_j (z) \frac{\partial f}{\partial z_j}(z),
\end{equation}
where $z_j$, $j\in\{1,\ldots,\dim(M)\}$, are local coordinates.
Similarly, a smooth time-dependent and parameter-dependent vector field acts as a differential operator
by
\begin{equation}
\label{eqn:vf_derivative_time}(\X  f)(z,t;\eps) = \sum_j (\X)_j (z,t;\eps)  \frac{\partial f}{\partial z_j}(z).
\end{equation}

For the pendulum system, whose hamiltonian $h_1$ is given in \eqref{eq:h1}, we denote by $(p_0(t),q_0(t))$ a parametrization of a separatrix of the pendulum, with  $(p_0(0),q_0(0))=(p_0,q_0)$, where $(p_0,q_0)$ is some initial point; this parametrization is explicitly given in \eqref{eqn:separatrix}.
We define a new locally defined system of symplectic coordinates $(y , x)$ in a neighborhood of the separatrix  -- chosen away from the hyperbolic equilibrium point -- as follows.
The coordinate $y$ is chosen to be equal to the energy of the   pendulum, i.e.,
\begin{equation}\label{eq:y0}
y=h_1(p,q)=\frac{p^2}{2}+(\cos(q)-1)
\end{equation}
and is defined in a whole neighborhood of one of its separatrices.
The coordinate $x$ is defined by
\[
dx=\frac{dt}{||\nabla y||},
\]
where $dt=(dp^2+dq^2)^\frac{1}{2}$ .
%
%
It is immediate to see that  $x$ equals   the time $\tau$ it takes the solution $(p(t),q(t))$ to go along the $y$-level set from one point to another (see \cite{gidea2021global}).
This coordinate system $(y , x)$ constructed above is not defined in a neighborhood of the separatrix that contains the hyperbolic equilibrium point, since this is a critical point of the energy function.
We define this coordinate system only in some neighborhood $\mathcal{N}$ of a segment of the separatrix containing $(p_0,q_0)$.
On this neighborhood, we have $dy\wedge dx =dp\wedge dq$.
Relative to this new coordinate system, the separatrix is given by $y=0$.

An arbitrary point on the separatrix can be given in terms of the $(p,q)$-coordinates as $(p_0(\tau),q_0(\tau))$ for some $\tau\in\mathbb{R}$, and in terms of the  $(y,x)$-coordinates as $(0,x)$ for some $x\in\mathbb{R}$, where $x=\tau$.

Now let's extend this coordinate system to  a system of coordinates $(y,x,I,\theta,s)$ on some neighborhood $\tilde{\mathcal{N}}$ of $\{(p_0(\tau), q_0(\tau), I,\theta, s)\}$ in the extended phase space.

Relative to this coordinate system, in the unperturbed case,
the stable/unstable manifolds $W^\st(\tilde{\Lambda}_0)=W^\un(\tilde{\Lambda}_0)$ are locally given by $y=0$.
A point $\tilde{z}_0\in W^\st(\tilde{\Lambda}_0)=W^\un(\tilde{\Lambda}_0)$ can be written in terms of the
original coordinates $(p, q, I, \theta, s)$ as
\[
\tilde{z}_0=(p_0(\tau),q_0(\tau),I,\theta,s) , \textrm { for some } \tau \in\mathbb{R},
\]
and in terms of the extended coordinates $(y, x, I, \theta, s)$ as
\[
\tilde{z}_0=(0,x,I,\theta,s), \textrm { for }  x=\tau \in\mathbb{R}.
\]
When we apply the flow to the point $\tilde{z}_0$ we obtain
$$
\tilde{\Phi}_0^t(\tilde{z}_0)=(p_0(\tau+t),q_0(\tau+t),I,\theta +\omega(I)t,s+t).
$$

Observe that if we denote by $\tilde{z}^\pm_0:=(p,q,I,\theta,s)=(0,0,I,\theta,s)$, we have
$\tilde{\Phi}_0^t(\tilde{z}^\pm_0)=(0,0,I,\theta+\omega(I)t,s+t)$, therefore:
\[
\tilde{\Phi}_0^t(\tilde{z}_0)-\tilde{\Phi}_0^t(\tilde{z}^\pm_0)\to 0, \quad \mbox{as} \quad t\to\pm\infty .
\]

In the perturbed case, for $\eps\neq 0$ small and $\lambda=\rho \eps$,  we can locally describe both the stable and unstable manifolds of $\tilde \Lambda_\eps$ as graphs of $C^{\ell-1}$-smooth functions $y_\eps^s,y_\eps^u,$ over $(x,I,\theta,s)$, recalling that $x=\tau$, given by

\begin{equation*}
\begin{split}
y_\eps^s=&y_\eps^s(x,I,\theta,s;\rho)=y_\eps^s(\tau,I,\theta,s;\rho),
\\
y_\eps^u=&y_\eps^u(x,I,\theta,s;\rho)=y_\eps^u(\tau,I,\theta,s;\rho),
\end{split}
\end{equation*}
respectively, for $(0,x,I,\theta,s) \in \tilde{\mathcal{N}}$. We stress the dependence of $\rho$ of these functions because will be important in the sequel.

Observe that, when $\eps=0$ we have the equation of the separatrix of the pendulum
\[
y_0^s(\tau,I,\theta,s;\rho)=y_0^u(\tau,I,\theta,s;\rho)=0.
\]
Consequently $y_\eps^u, y_\eps^s=O(\eps)$.

We recall the following Melnikov-type result for non-conservative perturbations:
\begin{thm}[Splitting of the Stable and Unstable Manifolds  \cite{gidea2021global}]\label{thm:GdLM1}
{$ $}\\
Fix $\rho_0>0$, then there exists $\eps_0>0$ such that for any $0\le \rho\le \rho_0$ and $0\le |\eps|\le \eps_0$ we have:
for $(0,\tau,I,\theta,s)\in \tilde{\mathcal{N}}$, the difference between $y_\eps^{s}(\tau,I,\theta,s;\rho)$ and $y_\eps^\un(\tau,I,\theta,s;\rho)$ is given by
\begin{equation*}
\begin{split}
y_\eps^\st-y_\eps^\un=& -\eps \int_{-\infty}^{+\infty} ((\mathcal{X}^1 h_1)(\tilde{\Phi}_0 ^t(\tilde{z}_0))-(\mathcal{X}^1 h_1)(\tilde{\Phi}_0 ^t(\tilde{z}^\pm_0)))dt+O(\eps^2),
\end{split}
\end{equation*}
where we recall that $h_1(p,q)=\frac{p^2}{2}+(\cos q -1)$ and $\mathcal{X}^1$ is given in \eqref{eq:pertubedfieldepsilon}.
\end{thm}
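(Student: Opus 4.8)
The plan is to derive the splitting formula by the standard Melnikov method adapted to non-conservative perturbations, measuring the splitting through the pendulum energy $h_1$ rather than through a graph parametrization directly. First I would set up the deformation: since for $\eps=0$ the stable and unstable manifolds of $\tilde\Lambda_0$ coincide with $\{y=0\}=\{h_1(p,q)=0\}$, and since we have the explicit homoclinic parametrization $(p_0(t),q_0(t))$, I would fix a homoclinic point $\tilde z_0=(p_0(\tau),q_0(\tau),I,\theta,s)\in\tilde{\mathcal N}$ and track, along the perturbed flow, the value of $h_1$ at the intersection of $W^\st(\tilde\Lambda_\eps)$ and of $W^\un(\tilde\Lambda_\eps)$ with the section transverse to the homoclinic orbit through $\tilde z_0$. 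Because the perturbed manifolds are $O(\eps)$-graphs $y^{\st}_\eps, y^{\un}_\eps$ over $(x,I,\theta,s)$, and because $y=h_1$ is exactly the first coordinate, the quantity $y^\st_\eps-y^\un_\eps$ is precisely the difference of the $h_1$-values of the two manifolds at the base point, up to $O(\eps^2)$.

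Next I would compute the evolution of $h_1$ along the perturbed flow. Since $h_1$ is a first integral of the unperturbed pendulum-rotator system, $\X^0 h_1 = \{h_1,H_0\}=0$, so along \eqref{eq:perturbedepsilon} one has $\frac{d}{dt}h_1(\tilde\Phi^t_\eps(\cdot)) = \eps(\X^1 h_1)(\tilde\Phi^t_\eps(\cdot))$, using the differential-operator notation \eqref{eqn:vf_derivative_time}. Integrating this from $-\infty$ to $0$ along the unstable manifold and from $0$ to $+\infty$ along the stable manifold, and using that on $W^\un(\tilde\Lambda_\eps)$ the forward-backward asymptotics approach the NHIM where $h_1\to 0$ (and similarly on $W^\st$), one gets expressions for $y^\un_\eps$ and $y^\st_\eps$ as $\eps$ times an improper integral of $\X^1 h_1$ along the perturbed trajectory plus a boundary term coming from the limiting value of $h_1$ on $\tilde\Lambda_\eps$. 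The subtraction of the two asymptotic base-point contributions $\tilde\Phi_0^t(\tilde z_0)$ versus $\tilde\Phi_0^t(\tilde z^\pm_0)$ in the statement is exactly what renders the improper integral convergent: $\X^1 h_1$ need not vanish on the NHIM (especially in the non-vanishing case $f(q)=\cos q$), but its difference with the value along the asymptotic orbit on $\tilde\Lambda_0$ decays, since $\tilde\Phi^t_0(\tilde z_0)-\tilde\Phi^t_0(\tilde z^\pm_0)\to 0$ as $t\to\pm\infty$ and $\X^1 h_1$ is Lipschitz.

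Then I would replace the perturbed trajectory $\tilde\Phi^t_\eps$ by the unperturbed one $\tilde\Phi^t_0$ inside the integrand, which introduces only an $O(\eps^2)$ error after multiplication by the prefactor $\eps$, by the usual Gronwall estimate on $\|\tilde\Phi^t_\eps-\tilde\Phi^t_0\|=O(\eps)$ on compact-in-rescaled-time intervals together with the exponential-in-$t$ decay of the integrand difference that justifies truncating the improper integral at $|t|=O(\log(1/\eps))$ and controlling the tails. Combining the stable and unstable pieces and orienting the sign conventions so that $y^\st_\eps-y^\un_\eps$ comes out with the stated sign gives
\[
y^\st_\eps-y^\un_\eps = -\eps\int_{-\infty}^{+\infty}\big((\X^1 h_1)(\tilde\Phi^t_0(\tilde z_0))-(\X^1 h_1)(\tilde\Phi^t_0(\tilde z^\pm_0))\big)\,dt + O(\eps^2),
\]
which is the claim. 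The main obstacle, and the step I would spend the most care on, is the convergence and error control of the improper integral in the non-vanishing case: one must verify that $(\X^1 h_1)(\tilde\Phi_0^t(\tilde z_0))-(\X^1 h_1)(\tilde\Phi_0^t(\tilde z^\pm_0))$ decays fast enough (exponentially, coming from $p_0(t)\sim e^{-|t|}$ and $q_0(t)$ approaching $0$ or $2\pi$) so that both the improper integral defining the Melnikov term and the $O(\eps^2)$ remainder are genuinely finite and uniform in $(I,\theta,s)$ and in $0\le\rho\le\rho_0$; the dissipative part $\X_\rho$ of $\X^1$ contributes a term $\rho\cdot(\X_{1} h_1)$ that does not decay on the NHIM, so the subtraction is essential precisely there. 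Since this is the content of Theorem~\ref{thm:GdLM1}, cited from \cite{gidea2021global}, I would either invoke that reference directly or reproduce its Gronwall-plus-tail-truncation argument in the present notation.
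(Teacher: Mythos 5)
The paper itself gives no proof of this theorem: it is imported verbatim from \cite{gidea2021global}, so the only internal ``proof'' is the citation. Your sketch reproduces the standard argument behind that result --- use $\X^0 h_1=0$ so that $\frac{d}{dt}h_1(\tilde{\Phi}^t_\eps(\cdot))=\eps(\X^1 h_1)(\tilde{\Phi}^t_\eps(\cdot))$, integrate forward along the stable manifold and backward along the unstable one with boundary value $h_1=0$ on $\tilde{\Lambda}_\eps=\{p=q=0\}$, then replace the perturbed orbits by the unperturbed homoclinic --- and this is consistent with the approach of the cited reference, so the strategy is sound. Two remarks. First, a factual slip: your claim that the dissipative part of $\X^1$ ``does not decay on the NHIM'' is false in this model, since $\X_{\rho}h_1=-\rho p^2$ vanishes identically on $\{p=q=0\}$, and so does $\{h_1,H_1\}$ there (a fact the paper uses explicitly in the proof of Corollary \ref{cor:GdLM1}); hence the subtracted term $(\X^1 h_1)(\tilde{\Phi}^t_0(\tilde{z}^\pm_0))$ is identically zero here, convergence of the Melnikov integral comes solely from the exponential decay of $p_0(t)$ and $\sin q_0(t)$ along the separatrix, and the subtraction is needed only for the general setting of the cited theorem. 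Second, the step you rightly flag as delicate is genuinely the crux: a bare Gronwall estimate gives $\|\tilde{\Phi}^t_\eps-\tilde{\Phi}^t_0\|=O(\eps e^{Ct})$, which is not $O(\eps)$ uniformly up to times $O(\log(1/\eps))$, so one needs the finite-window-plus-exponentially-small-tails splitting (or smooth dependence of the stable/unstable fibrations on $\eps$) to land the $O(\eps^2)$ remainder; deferring that control to \cite{gidea2021global} is acceptable and is exactly what the paper does.
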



\begin{cor}[Sufficient Conditions for the Existence of a Transverse Homoclinic Intersection]
\label{cor:GdLM1}
Fix $\rho_0>0$, then there exists $\eps_0>0$ such that for any $0\le \rho\le \rho_0$ and $|\eps|\le \eps_0$ we have:
for $(0,\tau,I,\theta,s)\in\tilde{\mathcal{N}}$, the difference between $y_\eps^\st(\tau,I,\theta,s;\rho)$ and $y_\eps^\un(\tau,I,\theta,s;\rho)$ is given by
\begin{equation}\label{eq:distwithro}
\begin{split}
y_\eps^\st-y_\eps^\un=&-\eps \left [ \int_{-\infty}^{+\infty} \lbrace h_1,H_1\rbrace(p_0(\tau+t),q_0(\tau+t),I,\theta+\omega(I)t,s+t) dt\right. \\
&\left.\qquad - {\rho} \int_{-\infty}^{+\infty}p_0^2(t)dt\right]
+O(\eps^2),
\end{split}
\end{equation}
where $\{\cdot,\cdot\}$ denotes the Poisson bracket.

If $\tau^*=\tau^*(I,\theta,s)$ is a non-degenerate zero of the mapping
\begin{equation}\label{eqn:yun_minus_yst_potential}
\begin{split}
\tau\in\mathbb{R}\mapsto &-\int_{-\infty}^{+\infty} \lbrace h_1,H_1\rbrace(p_0(\tau+t),q_0(\tau+t),I,\theta+\omega(I)t,s+t)dt
\end{split}
\end{equation}
then there exists $0<\rho_1\le \rho_0$ such that for all $0\le \rho\le \rho_1$
\begin{equation}\label{eqn:yun_minus_yst_potential_rho}
\begin{split}\tau\in\mathbb{R}\mapsto &-\left [\int_{-\infty}^{+\infty} \lbrace h_1,H_1\rbrace(p_0(\tau+t),q_0(\tau+t),I,\theta+\omega(I)t,s+t)dt\right.
\\
&\left. \qquad -{\rho} \int_{-\infty}^{+\infty}p_0^2(t)dt\right]
\end{split}
\end{equation}
has a non degenerate zero $\tau^*(I,\theta,s;\rho)$.
 \\
Moreover, there exists $0<\eps_1\le \eps_0$ such that for all $0\le \rho\le \rho_1$ and  $0<|\eps|\le\eps_1$,
$W^\st(\tilde{\Lambda}_\eps)$ and $W^\un(\tilde{\Lambda}_\eps)$ have a transverse homoclinic intersection which can be parametrized as
\begin{equation}\label{eqn:parametrization_star}
(\tau^*,y^\st_\eps(\tau^*,I,\theta,s;\rho), I, \theta,s)=(\tau^*,y^\un_\eps(\tau^*,I,\theta,s;\rho), I, \theta,s),
\end{equation}
where
$\tau^*=\tau^*(I,\theta,s;\rho, \eps)=\tau^*(I,\theta,s;\rho)+O(\eps)
=\tau^*(I,\theta,s)+O(\rho, \eps)$, for $(I,\theta,s)$ in some open set in $\tilde{U}\subseteq\mathbb{R}\times \mathbb{T}^1\times \mathbb{T}^1$.
\end{cor}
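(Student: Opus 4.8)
\emph{Proof proposal.} The plan is to convert the abstract Melnikov formula of Theorem~\ref{thm:GdLM1} into the explicit expression \eqref{eq:distwithro}, and then to produce a transverse homoclinic point by applying the implicit function theorem twice --- first to continue a non-degenerate zero from $\rho=0$ to small $\rho>0$, then to continue it from $\eps=0$ to small $\eps\neq 0$ --- and finally to read transversality of the manifolds off the non-degeneracy of that zero.

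First I would make the integrand of Theorem~\ref{thm:GdLM1} explicit. Writing $\mathcal{X}^1=J\nabla H_1+\mathcal{X}_\rho$ and using the identification of vector fields with derivations, the Hamiltonian part gives $(J\nabla H_1)h_1=\{h_1,H_1\}$, while, since $h_1$ depends only on $(p,q)$ and $\partial_p h_1=p$, the dissipative part gives $\mathcal{X}_\rho h_1=-\rho p\,\partial_p h_1=-\rho p^2$; hence $\mathcal{X}^1 h_1=\{h_1,H_1\}-\rho p^2$. Along $\tilde{\Phi}_0^t(\tilde{z}^\pm_0)$ the motion stays in $\{p=q=0\}$, on which both $\{h_1,H_1\}$ and $p^2$ vanish, so the integrand of Theorem~\ref{thm:GdLM1} reduces to $\{h_1,H_1\}(p_0(\tau+t),q_0(\tau+t),I,\theta+\omega(I)t,s+t)-\rho\,p_0^2(\tau+t)$; the substitution $u=\tau+t$ turns $\int p_0^2(\tau+t)\,dt$ into the $\tau$-independent positive constant $\int_{-\infty}^{+\infty}p_0^2(t)\,dt$, convergence of the first integral being inherited from Theorem~\ref{thm:GdLM1}. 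This is precisely \eqref{eq:distwithro}.

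Next, let $\Psi_\rho(\tau)$ denote the map in \eqref{eqn:yun_minus_yst_potential_rho} and $\Psi_0(\tau)$ the map in \eqref{eqn:yun_minus_yst_potential}; the previous step gives $\Psi_\rho=\Psi_0+\rho\int_{-\infty}^{+\infty}p_0^2(t)\,dt$, i.e. the two differ by a $\tau$-independent additive constant. Since by hypothesis $\Psi_0$ has a non-degenerate zero $\tau^*(I,\theta,s)$, so that $\partial_\tau\Psi_0(\tau^*)\neq 0$, the implicit function theorem yields $\rho_1>0$ and a family $\tau^*(I,\theta,s;\rho)=\tau^*(I,\theta,s)+O(\rho)$, $0\leq\rho\leq\rho_1$, of non-degenerate zeros of $\Psi_\rho$; restricting $(I,\theta,s)$ to a relatively compact subset of the set on which the hypothesis holds makes $\rho_1$ and the neighbourhood uniform, and this subset is the $\tilde{U}$ of the statement. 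Now \eqref{eq:distwithro} reads $y_\eps^{\st}-y_\eps^{\un}=\eps\,\Psi_\rho(\tau,I,\theta,s)+O_{\mathcal{C}^1}(\eps^2)$, hence $\frac{1}{\eps}(y_\eps^{\st}-y_\eps^{\un})=\Psi_\rho+O_{\mathcal{C}^1}(\eps)$; a second application of the implicit function theorem --- again using $\partial_\tau\Psi_\rho(\tau^*)\neq 0$ together with the $\mathcal{C}^1$-smallness of the remainder, which is what the $O_{\mathcal{C}^k}$ convention in Theorem~\ref{thm:GdLM1} encodes --- produces $\eps_1>0$ and a non-degenerate zero $\tau^*(I,\theta,s;\rho,\eps)=\tau^*(I,\theta,s;\rho)+O(\eps)=\tau^*(I,\theta,s)+O(\rho,\eps)$ of $y_\eps^{\st}-y_\eps^{\un}$, valid for $0<|\eps|\leq\eps_1$ and $(I,\theta,s)\in\tilde{U}$.

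Finally I would upgrade this simple zero to a transverse homoclinic intersection. At the common point $(\tau^*,y_\eps^{\st}(\tau^*,I,\theta,s;\rho),I,\theta,s)=(\tau^*,y_\eps^{\un}(\tau^*,I,\theta,s;\rho),I,\theta,s)$, the graph representations of $W^{\st}(\tilde{\Lambda}_\eps)$ and $W^{\un}(\tilde{\Lambda}_\eps)$ over $(\tau,I,\theta,s)$ show that $TW^{\st}(\tilde{\Lambda}_\eps)+TW^{\un}(\tilde{\Lambda}_\eps)$ already spans all the $(\tau,I,\theta,s)$-directions, and it picks up the remaining $y$-direction precisely because $\partial_\tau(y_\eps^{\st}-y_\eps^{\un})(\tau^*)\neq 0$; since $\dim W^{\st}+\dim W^{\un}-\dim(W^{\st}\cap W^{\un})=4+4-3=5$ equals the dimension of the extended phase space, the intersection is transverse, which gives the parametrization \eqref{eqn:parametrization_star}. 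In this scheme the Melnikov algebra and the two implicit-function-theorem steps are routine; the point that needs care is \emph{uniformity} --- extracting a single $\rho_1$, a single $\eps_1$, and a single open set $\tilde{U}$ on which the non-degenerate zero persists and the $O(\eps^2)$ error remains $\mathcal{C}^1$-small --- which I would secure by the compactness argument indicated above.
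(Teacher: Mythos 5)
Your proposal is correct and follows essentially the same route as the paper: split $\mathcal{X}^1 h_1$ into $\{h_1,H_1\}-\rho p^2$, use that both terms vanish along the orbit in $\{p=q=0\}$ and the change of variables $t\mapsto \tau+t$ to get \eqref{eq:distwithro}, then continue the non-degenerate zero first in $\rho$ and then in $\eps$ by the implicit function theorem, keeping $\rho_1$, $\eps_1$ and $\tilde U$ uniform. The only difference is cosmetic: you spell out the tangent-space/dimension-count argument for transversality, which the paper delegates to the reference \cite{DelshamsLS03a}, and the paper secures uniformity of $\eps_1$ in $\rho$ by taking $\eps_1=\min_{[0,\rho_1]}\tilde\eps_1(\rho)$, which is the same compactness consideration you indicate.
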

\begin{proof}
From Theorem \ref{thm:GdLM1}, we have
\begin{equation*}
\begin{split}
y_\eps^\st-y_\eps^\un=&-\eps  \int_{-\infty}^{+\infty} ((\mathcal{X}^{1}h_1)(\tilde{\Phi}_0^t(\tilde{z}_0))-(\mathcal{X}^{1}h_1)(\tilde{\Phi}_0^t(\tilde{z}_0^{\pm})))dt + O(\eps^2).
\end{split}
\end{equation*}
As  $\lambda=\eps{\rho}$, the vector field $\mathcal{X}^1$ (see \eqref{eq:pertubedfieldepsilon})  is the sum of the Hamiltonian vector field $J\nabla H_1$ and the dissipative vector field
\[
\mathcal{X}_{\rho}(p,q,I,\theta)=(-{\rho} p,0,-{\rho}(I-\omega_*),0),
\]
therefore
\begin{equation*}
\begin{split}
y_\eps^\st-y_\eps^\un=&-\eps  \int_{-\infty}^{+\infty} \bigg[(J\nabla H_1+\mathcal{X}_{\rho})h_1(\tilde{\Phi}_0^t(\tilde{z}_0))-(J\nabla H_1+\mathcal{X}_{\rho}) h_1(\tilde{\Phi}_0^t(\tilde{z}_0^{\pm}))\bigg]dt + O(\eps^2)\\
=&
-\eps  \bigg[\int_{-\infty}^{+\infty} (J\nabla H_1h_1)(\tilde{\Phi}_0^t(\tilde{z}_0))-(J\nabla H_1h_1)(\tilde{\Phi}_0^t(\tilde{z}_0^{\pm}))dt\\
&
\qquad +\int_{-\infty}^{+\infty}(\mathcal{X}_{\rho} h_1)(\tilde{\Phi}_0^t(\tilde{z}_0))-(\mathcal{X}_{\rho} h_1)(\tilde{\Phi}_0^t(\tilde{z}_0^{\pm}))dt\bigg]+ O(\eps^2)\\
:=& -\eps(\mathcal{F}_1+\mathcal{F}_2)+O(\eps^2).
\end{split}
\end{equation*}
In the above,
\begin{equation*}
\begin{split}
\mathcal{F}_1:=
&\int_{-\infty}^{+\infty} (J\nabla H_1h_1)(\tilde{\Phi}_0^t(\tilde{z}_0))-(J\nabla H_1h_1)(\tilde{\Phi}_0^t(\tilde{z}_0^{\pm}))dt\\
=&\int_{-\infty}^{+\infty} \lbrace h_1,H_1\rbrace(\tilde{\Phi}_0^t(\tilde{z}_0))-\lbrace h_1,H_1\rbrace(\tilde{\Phi}_0^t(\tilde{z}_0^{\pm}))dt, \\
=&\int_{-\infty}^{+\infty} \bigg(\lbrace h_1,H_1\rbrace(p_0(\tau+t),q_0(\tau+t),I,\theta+\omega(I)t,s+t)\\
&\qquad -\lbrace h_1,H_1\rbrace(0,0,I,\theta+\omega(I),s+t)\bigg)dt\\
\mathcal{F}_2:=
& \int_{-\infty}^{+\infty}(\mathcal{X}_{\rho} h_1)(\tilde{\Phi}_0^t(\tilde{z}_0))-(\mathcal{X}_{\rho} h_1)(\tilde{\Phi}_0^t(\tilde{z}_0^{\pm}))dt\\
=&{\rho} \int_{-\infty}^{+\infty}(\mathcal{X}_{\omega_*} h_1)(\tilde{\Phi}_0^t(\tilde{z}_0))-(\mathcal{X}_{\omega_*} h_1)(\tilde{\Phi}_0^t(\tilde{z}_0^{\pm}))dt
\end{split}
\end{equation*}
where we denote
$\mathcal{X}_{\omega_*}=(-p,0,-(I-\omega_*),0)$.
Since
$\mathcal{X}_{\omega_*} h_1=-p\frac{\partial h_1}{\partial p}=-p^2$,
and recalling that
$\tilde{\Phi}^t_0(\tilde{z}_0)=(p_0(\tau+t),q_0(\tau+t),I, \theta+\omega(I)t, s+t)$ and
$\tilde{\Phi}^t_0(\tilde{z}_0^\pm)=(0,0,I, \theta+\omega(I)t, s+t)$, we obtain
\begin{equation*}
\begin{split}
\mathcal{F}_2=&-{\rho} \int_{-\infty}^{+\infty}p_0^2(\tilde{\Phi}_0^t(\tilde{z}_0))dt =
-\rho \int_{-\infty}^{+\infty}p_0^2(\tau+t)dt
\end{split}
\end{equation*}
Finally,
\begin{equation*}
\begin{split}
y_\eps^\st-y_\eps^\un=
&-\eps \int_{-\infty}^{+\infty} \bigg(\lbrace h_1,H_1\rbrace(p_0(\tau+t),q_0(\tau+t),I,\theta+\omega(I)t,s+t)\\
&\qquad -\lbrace h_1,H_1\rbrace(0,0,I,\theta+\omega(I),s+t)\bigg)dt\\
&+\eps{\rho} \int_{-\infty}^{+\infty}p_0^2(\tau+t)dt
+O(\eps^2).
\end{split}
\end{equation*}
Note that $\lbrace h_1,H_1\rbrace=-\sin q\frac{\partial H_1}{\partial p}-p\frac{\partial H_1}{\partial q}$ hence $\lbrace h_1,H_1\rbrace(0,0,I,\theta+\omega(I),s+t)=0$.
Also note that by the change of variable formula $ \int_{-\infty}^{+\infty}p_0^2(\tau+t)dt= \int_{-\infty}^{+\infty}p_0^2(t)dt$. Thus, we obtain the first part of Corollary \ref{cor:GdLM1}.


The second part of Corollary \ref{cor:GdLM1} is as follows.
First, if $\tau^*=\tau^*(I,\theta,s)$ is a non-degenerate zero of the mapping \eqref{eqn:yun_minus_yst_potential}, there exists $0<\rho_1\le \rho_0$  such that the function:
\begin{equation*}
\begin{split}
\tau\in\mathbb{R}\mapsto &-\left[\int_{-\infty}^{+\infty} \lbrace h_1,H_1\rbrace(p_0(\tau+t),q_0(\tau+t),I,\theta+\omega(I)t,s+t)dt -{\rho} \int_{-\infty}^{+\infty}p_0^2(t)dt\right]
\end{split}
\end{equation*}
also has a non-degenerate zero
$\tau^*(I,\theta,s;\rho)=\tau^*(I,\theta,s)+O(\rho)$ for any $0\le \rho\le \rho_1$.
\\
Now, we apply the  implicit function theorem to find the zeroes of the function:
\[
\tau \to 
y_\eps^\st (\tau, I, \theta,s;\rho)-y_\eps^\un (\tau, I, \theta,s;\rho),
\]
obtaining a value $0<\tilde \eps_1(\rho)\le \eps_0$, such that, for any $0<\eps\le \tilde\eps_1$,  this map has a non-degenerate  zero
$\tau^*(I,\theta,s;\rho,\eps)=\tau^*(I,\theta,s;\rho)+O(\eps)=\tau^*(I,\theta,s)+O(\rho,\eps)$.
An important observation is that $\tilde \eps_1(0)\ne 0$, therefore we set $\eps_1=\min _{[0,\rho_1]}\tilde \eps_1 (\rho) >0$.
In this way, arguing as in \cite{DelshamsLS03a}, the stable and unstable manifolds  $W^\st(\tilde{\Lambda}_\eps)$ and $W^\un(\tilde{\Lambda}_\eps)$ have a transverse homoclinic intersection which can be parametrized as in
\eqref{eqn:parametrization_star}.
\end{proof}

Provided that the  unperturbed stable and unstable manifolds of the NHIM coincide, adding a generic  Hamiltonian perturbation makes the stable and unstable manifolds to intersect transversally; see, e.g.,  \cite{gidea2018global}.
However, non-conservative perturbations can in general destroy the homoclinic intersection; this is for example the case of the dissipative pendulum shown in Fig.~\ref{fig:pendulum_dis}.  In contrast, the Corollary \ref{cor:GdLM1} shows that for the system \eqref{eqn:main_system}, where the  dissipation is of the same order as the forcing, that is $\lambda=\eps \rho$, the perturbed stable and unstable manifolds intersect transversally  for all sufficiently small perturbation parameter values $\rho$. Later, in Section \ref{sec:proofs}, we will be interested  in taking
$\rho=\rho(\eps)=\frac{\bar \rho}{\log (\frac{1}{\eps})}$, but, clearly, for $\eps $ small enough, these values of $\rho$ satisfy the hypotheses of Corollary \ref{cor:GdLM1}. The result is summarized in next corollary.

\begin{cor}[Existence of Transverse Intersection in the Model]\label{cor:trans_model}
Take any $\bar \rho >0$. Consider the perturbation $H_1$ given by \eqref{eqn:vanishing} or \eqref{eqn:non-vanishing} and the dissipative pertubation as in \eqref{eqn:Xeps} with $\lambda =\eps \frac{\bar \rho}{\log(\frac{1}{\eps})}$.
Then there exists $\eps_0$  sufficiently small such that for all $0<|\eps|<\eps_0$,
$W^\st(\tilde{\Lambda}_\eps)$ and $W^\un(\tilde{\Lambda}_\eps)$ have a transverse homoclinic intersection $\tilde{\Gamma}_\eps$ which can be parametrized as in
\eqref{eqn:parametrization_star}.
\end{cor}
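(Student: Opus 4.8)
The plan is to obtain Corollary~\ref{cor:trans_model} as a direct specialization of Corollary~\ref{cor:GdLM1} to the choice $\rho=\rho(\eps)=\bar\rho/\log(1/\eps)$, the only genuine work being the verification of the standing hypothesis of Corollary~\ref{cor:GdLM1}, namely that the reduced Melnikov map~\eqref{eqn:yun_minus_yst_potential} has a non-degenerate zero at some point $(I_0,\theta_0,s_0)$. Granting this, Corollary~\ref{cor:GdLM1} produces thresholds $\rho_1,\eps_1>0$ together with the transverse homoclinic intersection parametrized as in~\eqref{eqn:parametrization_star} for all $0\le\rho\le\rho_1$ and $0<|\eps|\le\eps_1$; since $\rho(\eps)\to0$ as $\eps\to0$, there is $\eps_*>0$ with $0<\rho(\eps)\le\rho_1$ for all $0<|\eps|<\eps_*$, and then $\eps_0:=\min\{\eps_*,\eps_1\}$ works. (Recall that for both perturbations the NHIM is unaffected, $\tilde{\Lambda}_\eps=\tilde{\Lambda}_0$, so $W^{\st}(\tilde{\Lambda}_\eps)$ and $W^{\un}(\tilde{\Lambda}_\eps)$ are precisely the objects to which Corollary~\ref{cor:GdLM1} applies.)

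For the Melnikov computation: in both cases~\eqref{eqn:vanishing} and~\eqref{eqn:non-vanishing} we have $H_1=f(q)g(\theta,s)$ with $f'(q)=-\sin q$ and $g(\theta,s)=a_{00}+a_{10}\cos\theta+a_{01}\cos s$, so $\{h_1,H_1\}=-p\,f'(q)\,g(\theta,s)$ and hence the map~\eqref{eqn:yun_minus_yst_potential} equals
\[
M(\tau,I,\theta,s)=\int_{-\infty}^{\infty}p_0(\tau+t)\,f'(q_0(\tau+t))\,g(\theta+It,\,s+t)\,dt.
\]
Substituting $u=\tau+t$ and using $p_0(u)f'(q_0(u))=\tfrac{d}{du}\cos q_0(u)$, the $a_{00}$-term telescopes to $\cos q_0(+\infty)-\cos q_0(-\infty)=0$. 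For the remaining two harmonics one uses that $p_0$ and $\cos q_0$ are even while $p_0\sin q_0$ is odd (immediate from~\eqref{eqn:separatrix}, since $q_0(-u)=2\pi-q_0(u)$): the $\cos(\omega u)$-type integrals vanish by oddness, and after one integration by parts the $\sin(\omega u)$-type integrals reduce to the classical identity $\int_{-\infty}^{\infty}\cos(\omega u)/\cosh^2u\,du=\pi\omega/\sinh(\pi\omega/2)$. This yields
\[
M(\tau,I,\theta,s)=-\frac{2\pi a_{10}I^2}{\sinh(\pi I/2)}\sin(\theta-I\tau)-\frac{2\pi a_{01}}{\sinh(\pi/2)}\sin(s-\tau).
\]

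Finally, to exhibit a non-degenerate zero, fix $I_0>0$ with $I_0\neq1$, so that $B_1:=2\pi a_{10}I_0^2/\sinh(\pi I_0/2)$ and $B_2:=2\pi a_{01}/\sinh(\pi/2)$ are both nonzero (here we use $a_{10}a_{01}\neq0$). At $\theta_0=0$ we have $M(0,I_0,0,s)=-B_2\sin s$, which vanishes at $s_0=0$ and at $s_0=\pi$; at such a point $\partial_\tau M=B_1I_0+B_2\cos s_0$, and the two candidate values $B_1I_0+B_2$ and $B_1I_0-B_2$ cannot both vanish, so for a suitable $s_0\in\{0,\pi\}$ the point $(I_0,0,s_0)$ carries a simple zero $\tau^*=0$ of~\eqref{eqn:yun_minus_yst_potential}. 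This is exactly the input required by Corollary~\ref{cor:GdLM1}, and the argument closes. The step I expect to require the most care is not any single estimate but the bookkeeping of the two nested smallness thresholds: in Corollary~\ref{cor:GdLM1} the admissible $\eps$-range is obtained uniformly over $\rho\in[0,\rho_1]$, whereas here $\rho=\rho(\eps)$ is itself $\eps$-dependent; this is harmless precisely because $\eps\mapsto\rho(\eps)$ lands in $[0,\rho_1]$ once $\eps$ is small, but it should be stated cleanly.
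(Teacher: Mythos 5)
Your proposal is correct and takes essentially the paper's route: the paper's proof of Corollary~\ref{cor:trans_model} is precisely the observation that $\rho(\eps)=\bar\rho/\log(1/\eps)\to 0$ eventually lies in the admissible range $[0,\rho_1]$ of Corollary~\ref{cor:GdLM1}, whose threshold $\eps_1$ is already uniform over $\rho\in[0,\rho_1]$, so taking $\eps_0$ as you do is exactly the intended argument. Your extra step of verifying the non-degenerate-zero hypothesis explicitly is consistent with the paper (your $M$ is, up to sign, $\partial_\tau\mathcal{L}(I,\theta-\omega(I)\tau,s-\tau)$ for the Melnikov potential \eqref{eqn:potential_vanishing_1} computed in Section~\ref{sec:scattering_vanishing_non-vanishing}), a hypothesis the paper's one-line proof leaves implicit, so this addition is welcome rather than a deviation.
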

\begin{proof}
The proof follows by the fact that in this case $\rho=\frac{\bar \rho}{\log(\frac{1}{\eps})}$ satisfies the conditions of Corollary \ref{cor:GdLM1} if $\eps$ is small enough.
\end{proof}

\section{Computation of the scattering map for the perturbed system}

\subsection{The scattering map}\label{sec:scattering}
We give a brief description of the scattering map, following  \cite{DelshamsLS08a}.
Consider the general case of a normally hyperbolic invariant manifold $\Lambda$ for a flow $\Phi^t$ on some smooth manifold $M$. Let $W^\st(\Lambda)$, $W^\un(\Lambda)$ be the stable and unstable manifolds of $\Lambda$.
First, let
\begin{equation*}
\begin{split}
  \Omega^+:W^\st(\Lambda)\to\Lambda,\\
  \Omega^-:W^\un(\Lambda)\to\Lambda,
\end{split}
\end{equation*}
be the canonical projections along fibers,  assigning  to each  point
$x\in W^\st(\Lambda)$ its stable foot point $x^{+}=\Omega^{+}(x)$, uniquely defined by
$x\in W^\st(x^+)$, and, similarly,  assigning to $x\in W^\un(\Lambda)$ its unstable footpoint $x^{-}=\Omega^{-}(x)$  uniquely defined by
$x\in W^\un(x^-)$.

Second, choose and fix a `homoclinic channel', which is a  homoclinic manifold $ \Gamma$ in $W^\un( \Lambda)\cap
W^\st( \Lambda)$ that  satisfies the following strong transversality conditions:
\begin{equation*}\begin{split}
T_x\Gamma=&T_xW^\st(\Lambda)\cap
T_xW^\un(\Lambda),
\\
T_xM=&T_x\Gamma \oplus T_xW^\un(x^-)\oplus  T_xW^\st(x^+),
\end{split}\end{equation*}
for all  $x\in\Gamma$,
and such that  \[\Omega^\pm_{\mid \Gamma}:\Gamma\to \Omega^\pm(\Gamma) \textrm{ is a diffeomorphism. }\]

Then, the scattering map associated to the homoclinic channel $\Gamma$ is the mapping
$\sigma:\Omega^{-}(\Gamma)\to \Omega^{+}(\Gamma)$ defined by
\[\sigma=\Omega^+\circ (\Omega^{-})^{-1}.\]

The map $\sigma$ is a locally defined diffeomorphism on $\Lambda$. Moreover,  $\sigma$ is  symplectic provided that $M$, $\Lambda$, $\Phi^t$ are symplectic.

\begin{rem}\label{rem:scattering_homoclinic}We have $\sigma(x^-)=x^+$ if and only if \begin{equation}\label{eqn:scattering_homoclinic}
d(\Phi^{-T_-}(x),\Phi^{-T_-}(x^-))\to~0,\textrm{ and  }d(\Phi^{T_+}(x),\Phi^{T_+}(x^+))\to~0\end{equation} as $T_-,T_+\to+\infty$, respectively, for some uniquely defined $x\in\Gamma$.
This means that for orbits in $\Lambda$ of the form $x'_{\textrm{end}}=\Phi^{T_+}\circ \sigma\circ \Phi^{T_-}(x'_{\textrm{start}})$, where $x'_{\textrm{start}}=\Phi^{-T_-}(x^-)$ and  $x'_{\textrm{end}}=\Phi^{T_+}(x^+)$,
one can find homoclinic orbit segments in $M$ of the form $x_{\textrm{end}}=\Phi^{T_++T_-}(x_{\textrm{start}})$, such that $x_{\textrm{start}}$ is arbitrarily close to $x'_{\textrm{start}}$ and
$x_{\textrm{end}}$ is arbitrarily close to $x'_{\textrm{end}}$. See Fig.~\ref{fig:homoclinic}.
\begin{figure}
\centering
\includegraphics[width=0.35\textwidth]{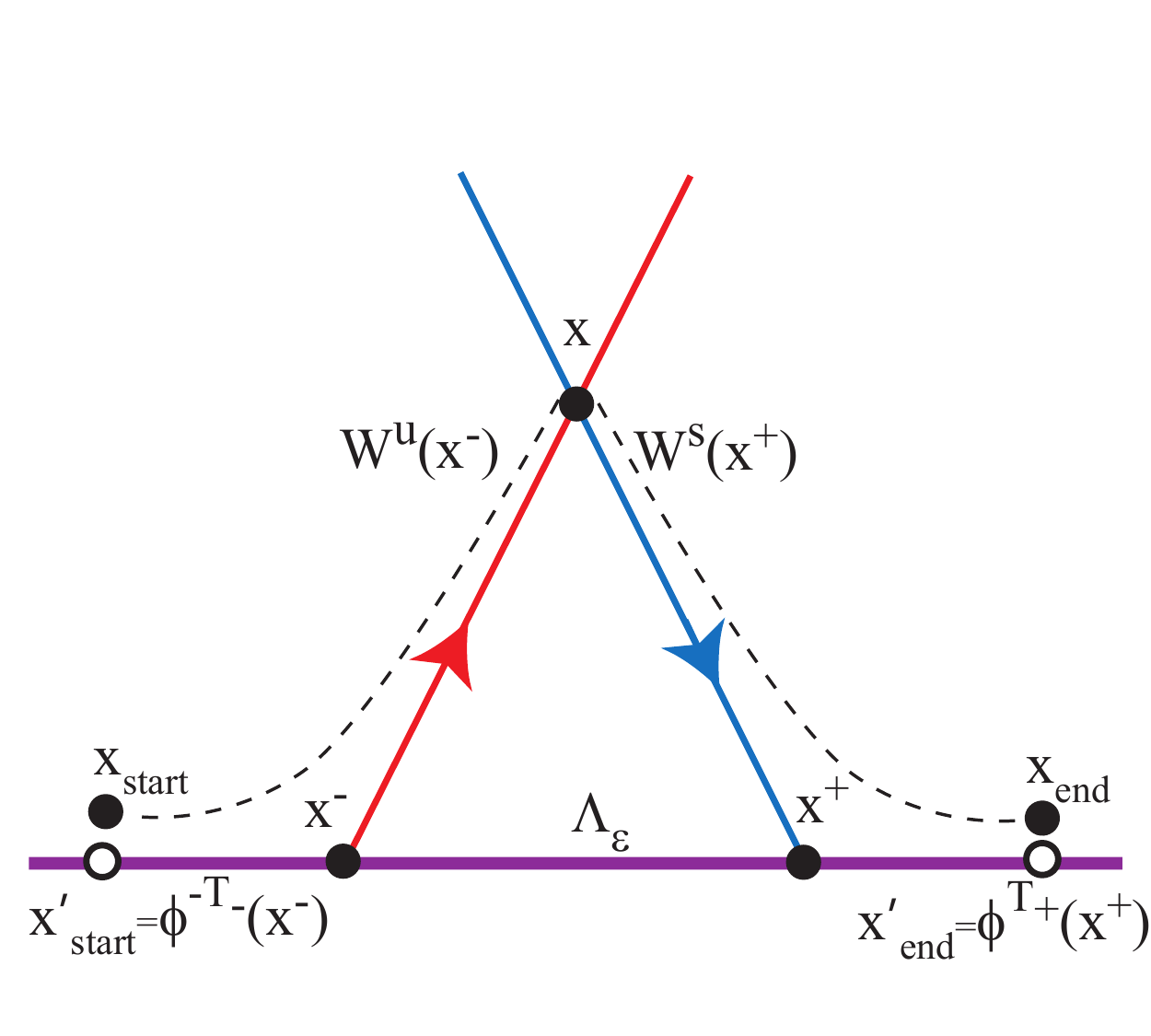}
\caption{Homoclinic orbit segment approximating an orbit obtained by applying the scattering map and the inner map.}
\label{fig:homoclinic}
\end{figure}
\end{rem}

\subsection{The scattering map of the perturbed system}

Assuming that the conditions in Corollary \ref{cor:GdLM1} are satisfied, then
$W^\st(\tilde{\Lambda}_\eps)$ and $W^\un(\tilde{\Lambda}_\eps)$ intersect transversally in the homoclinic channel $\tilde{\Gamma}_\eps$, which can be parametrized as in Corollary \ref{cor:GdLM1},
for all $0<|\eps|<\eps_1$.
Let $\tilde{z}_\eps \in \tilde{\Gamma}_\eps$ be a homoclinic point for the perturbed extended flow $\tilde{\Phi}^t_\eps$.
In terms of the  coordinates from Section \ref{sec:existence_transverse}, we have
\[
\tilde{z}_\eps=(\tau^*(I,\theta,s;\rho,\eps), y^\st_\eps(\tau^*(I,\theta,s;\rho,\eps),I,\theta,s;\rho), I,\theta,s)
\]
where $\tau^*(I,\theta,s;\rho,\eps)=\tau^*(I,\theta,s;\rho)+O(\eps)$ and
$\tau^*(I,\theta,s;\rho)=\tau^*(I,\theta,s)+O(\eps)$
is a  non-degenerate zero of the mapping
\eqref{eqn:yun_minus_yst_potential_rho}
near $\tau^*(I,\theta,s)$, a chosen  non-degenerate zero of the mapping \eqref{eqn:yun_minus_yst_potential}.

Because of the smooth dependence
of the NHIM  and of its stable and unstable
manifolds on the perturbation parameter, to the homoclinic point $\tilde{z}_\eps =(z_\eps,s)$ for perturbed flow $\tilde{\Phi}^t_\eps$ it corresponds a homoclinic point
$\tilde{z}_0=(z_0, s)$   for the unperturbed flow $\tilde{\Phi}^t_0$,  which is $O(\eps)$-close to $\tilde{z}_\eps$. In fact, going back to  the original coordinates, the point $\tilde z_\eps$ becomes:

\begin{equation}\label{eqn:zepsz0}
\begin{split}
\tilde{z}_\eps &=
(p_0(\tau^*(I,\theta,s;\rho,\eps)), q_0(\tau^*(I,\theta,s;\rho,\eps)),I,\theta,s) +O(\eps)\\
&= (p_0(\tau^*(I,\theta,s;\rho)), q_0(\tau^*(I,\theta,s;\rho)),I,\theta,s) +O(\eps)\\
&=\tilde{z}_0+O(\eps),
\mbox{ where }\\
\tilde{z}_0&= (p_0(\tau^*(I,\theta,s;\rho)), q_0(\tau^*(I,\theta,s;\rho)),I,\theta,s)
\end{split}
\end{equation}
Note that in the above  the $O(\eps)$-error only  affects the $p,q$ components.

We denote the stable- and unstable-footpoints of
$\tilde{z}_\eps$ and of  $\tilde{z}_0$ by  $\tilde{z}_\eps^\pm$ and  $\tilde{z}_0^\pm$, respectively. Recall that we already know that $\tilde{z}_0^\pm=(0,0,I,\theta,s)$.
Summarizing  the notation:
\begin{itemize}
\item $\tilde{z}_\eps\in \tilde{\Gamma}_\eps\subset W^\st(\tilde{\Lambda}_\eps)\pitchfork W^\un(\tilde{\Lambda}_\eps)$;
\item $\tilde{z}^{\pm}_\eps=\Omega^{\pm}(\tilde{z}_\eps)\in \tilde{\Lambda}_\eps$;
\item $\tilde{z}_0\in \tilde{\Gamma}_0\subset W^\st(\tilde{\Lambda}_0)\pitchfork W^\un(\tilde{\Lambda}_0)$;
\item $\tilde{z}^{\pm}_0=\Omega^{\pm}(\tilde{z}_0)\in \tilde{\Lambda}_0$;
\end{itemize}

Under the above assumptions, we have $\tilde{\sigma}_\eps(\tilde{z}^{-}_\eps)=\tilde{z}^{+}_\eps$, and $\tilde{\sigma}_0(\tilde{z}^{-}_0)=\tilde{z}^{+}_0$.
We recall that, in our model,  for the unperturbed system,
$\tilde{z}^{-}_0=\tilde{z}^{+}_0=\tilde z_0^\pm$
and therefore the scattering map is the identity: $\tilde{\sigma}_0=$Id.

The perturbed scattering map $\tilde{\sigma}_\eps$ can be expanded in terms of powers of $\eps$,  with the zero-th order term being the unperturbed scattering map $\tilde{\sigma}_0$, as follows
\begin{equation*}
\begin{split}
\tilde{\sigma}_\eps(I,\theta,s)=&\tilde{\sigma}_0(I,\theta,s)+\eps\mathcal{S}(I,\theta,s)+O(\eps^2)\\
=&(I,\theta,s)+\eps\mathcal{S}(I,\theta,s)+O(\eps^2),
\end{split}
\end{equation*}
where $\mathcal{S}=(\mathcal{S}^I,\mathcal{S}^\theta, \textrm{Id}^s)$.

In the sequel, we  evaluate the components $\mathcal{S}^I$ and $\mathcal{S}^\theta$ in order to compute the change in action $I$ and the change in angle $\theta$ by the scattering map. We follow the approach in \cite{gidea2021global,gidea2022melnikov}.

\subsection{Change in Action by the Scattering Map}
\label{sec:scattering_change_in_action}
We use the following result:

\begin{thm}[Change in Action by the Scattering Map \cite{gidea2021global}] \label{thm:change_in_I}
For a general non-conservative perturbation $\X^1$ of \eqref{eqn:H0} like in \eqref{eq:perturbedepsilon},
the change in action $I$ by the scattering map $\tilde{\sigma}_\eps$ is given by:
\begin{equation}\label{eqn:change_in_I_non_ham}
\begin{split}
I\left(\tilde{z}^{+}_{\eps}\right)-I\left(\tilde{z}^-_{\eps}\right)
= &\eps\int_{-\infty}^{+\infty}\left(\mathcal{X}^{1}I(\tilde{\Phi}^{t}_{0}(\tilde{z}_{0}))
  -\mathcal{X}^{1}I(\tilde{\Phi}^{t}_{0}(\tilde{z}^\pm_{0}))\right)dt\\
&+O\left(\eps^{2}\right),
\end{split}
\end{equation}
where $\tilde z_\eps$ and $\tilde z_0$ are given in \eqref{eqn:zepsz0}, and  we denote $I_0=I(\tilde{z}_0)=I(\tilde{z}^\pm_0)$.
\end{thm}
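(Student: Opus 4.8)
The plan is to prove Theorem~\ref{thm:change_in_I} by a direct computation of the difference of the two footpoints along the action coordinate, using the equivariance of the stable and unstable foliations and an absolutely convergent improper integral. First I would write $I(\tilde z_\eps^+)-I(\tilde z_\eps^-)$ as a telescoping sum: introducing the flowed footpoints $\tilde\Phi^t_\eps(\tilde z_\eps^+)$ and $\tilde\Phi^t_\eps(\tilde z_\eps^-)$, I would use that $I(\tilde z_\eps^\pm) = I(\tilde z_\eps^+) $ cannot be read off directly, so instead I decompose
\[
I(\tilde z_\eps^+)-I(\tilde z_\eps^-)=\lim_{T\to+\infty}\Big[\big(I(\tilde\Phi^{T}_\eps(\tilde z_\eps^+))-I(\tilde\Phi^{T}_\eps(\tilde z_\eps))\big)-\big(I(\tilde\Phi^{-T}_\eps(\tilde z_\eps^-))-I(\tilde\Phi^{-T}_\eps(\tilde z_\eps))\big)\Big]+\big(I(\tilde\Phi^T_\eps(\tilde z_\eps))-I(\tilde\Phi^{-T}_\eps(\tilde z_\eps))\big),
\]
where the first two bracketed terms tend to $0$ as $T\to\infty$ by the definition of the footpoints (convergence along stable/unstable fibers, as in Remark~\ref{rem:scattering_homoclinic}), and the last term is an integral of $\frac{d}{dt}I(\tilde\Phi^t_\eps(\tilde z_\eps))$ over $[-T,T]$. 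Writing $\dot I$ along the perturbed flow and using $\dot I = \mathcal{X}^1 I\cdot\eps$ plus the unperturbed contribution (which vanishes since $I$ is conserved by $\tilde\Phi^t_0$), this produces $\eps\int_{-\infty}^{+\infty}(\mathcal{X}^1 I)(\tilde\Phi^t_\eps(\tilde z_\eps))\,dt$ up to boundary terms.

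The second step is to replace the perturbed objects by unperturbed ones at the cost of $O(\eps^2)$. Here I would use that $\tilde z_\eps = \tilde z_0 + O(\eps)$ from \eqref{eqn:zepsz0}, that $\tilde\Phi^t_\eps = \tilde\Phi^t_0 + O(\eps)$ on compact time intervals, and — crucially — that the integrand $(\mathcal{X}^1 I)(\tilde\Phi^t_0(\tilde z_0)) - (\mathcal{X}^1 I)(\tilde\Phi^t_0(\tilde z_0^\pm))$ decays exponentially in $|t|$, because $\tilde\Phi^t_0(\tilde z_0)$ and $\tilde\Phi^t_0(\tilde z_0^\pm)$ converge to each other exponentially fast as $t\to\pm\infty$ (the pendulum separatrix approaches the hyperbolic point at rate $e^{-|t|}$) while $\mathcal{X}^1 I = -\eps^{-1}\lambda(I-\omega_*) - f(q)\partial_\theta g$ is Lipschitz in $(p,q)$ near the separatrix. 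This exponential decay is what makes the improper integral converge and what lets one truncate the time interval to $[-T_\eps,T_\eps]$ with $T_\eps=O(\log(1/\eps))$, control the error of the $\eps$-perturbation of the flow on that interval by $O(\eps \cdot e^{cT_\eps}\cdot\eps) = O(\eps^2)\cdot\eps^{-c'}$-type bounds, and absorb the tails. One must be a little careful that $\mathcal{X}^1$ itself carries a $\rho$-dependent piece, but since $\rho\le\rho_0$ is bounded this is harmless.

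The subtraction of $(\mathcal{X}^1 I)(\tilde\Phi^t_0(\tilde z_0^\pm))$ deserves a word: this term equals the value of the integrand ``at the NHIM'', and appears because without it the integral $\int(\mathcal{X}^1 I)(\tilde\Phi^t_0(\tilde z_0))\,dt$ diverges — indeed along the separatrix $I$ stays at $I_0$ but $\mathcal{X}^1 I$ contains the non-decaying term $-\rho(I_0-\omega_*)$. Subtracting the footpoint value, which carries exactly the same non-decaying term, regularizes the integral. Concretely, $\mathcal{X}^1 I(p,q,I,\theta,s) = -\rho(I-\omega_*) - f(q)\,\partial_\theta g(\theta,s)$, and at $\tilde z_0^\pm=(0,0,I,\theta,s)$ this is $-\rho(I-\omega_*) - f(0)\,\partial_\theta g$; the difference is $-(f(q_0(\tau+t))-f(0))\,\partial_\theta g$, which decays like $e^{-|t|}$ since $q_0(\tau+t)\to 0,2\pi$ exponentially. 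This is the regularized Melnikov potential for the action, fully consistent with Corollary~\ref{cor:GdLM1}.

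The main obstacle, and the place where the argument needs genuine care rather than bookkeeping, is the uniform control of the $\eps$-dependence of the perturbed flow, footpoints, and invariant manifolds over the growing time window $[-T_\eps,T_\eps]$, i.e.\ justifying that all the error terms accumulated in steps one and two are genuinely $O(\eps^2)$ and not merely $o(\eps)$. This is a standard but delicate Gronwall-type estimate exploiting normal hyperbolicity: on $W^{\st/\un}(\tilde\Lambda_\eps)$ the relevant differences contract/expand at rates $e^{\mp t}$ which beat the $e^{+|t|}$-growth of the variational equation only marginally, so one needs the quantitative $\lambda_\pm,\mu_\pm$ bounds from \eqref{eqn:NHIM_rates} together with the smooth dependence of the NHIM and its fibers on $\eps$. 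Since the theorem is quoted verbatim from \cite{gidea2021global}, I would cite that reference for the detailed hyperbolic estimates and present here only the structural telescoping-and-regularization argument above; the remaining work is identifying $\mathcal{X}^1 I$ explicitly from \eqref{eqn:evolution} and \eqref{eq:pertubedfieldepsilon} and reading off the stated formula.
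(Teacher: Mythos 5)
First, a point of reference: the paper does not prove Theorem~\ref{thm:change_in_I} at all --- it is quoted from \cite{gidea2021global} --- so the relevant comparison is with the standard argument of that reference, which is indeed the one you are aiming at: fundamental theorem of calculus along the flow, exponential convergence along the stable/unstable fibers, and replacement of perturbed objects by unperturbed ones at cost $O(\eps^2)$. Your discussion of why the subtracted footpoint term is exactly what makes the improper integral converge (the non-decaying piece $-\rho(I-\omega_*)$ cancels, leaving an exponentially decaying integrand) is also correct. However, your opening decomposition is wrong, and as a consequence the subtracted term never actually emerges from your argument.

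Concretely, the right-hand side of your displayed identity telescopes to $\lim_{T\to\infty}\bigl[I(\tilde{\Phi}^{T}_\eps(\tilde z^{+}_\eps))-I(\tilde{\Phi}^{-T}_\eps(\tilde z^{-}_\eps))\bigr]$, which is not $I(\tilde z^{+}_\eps)-I(\tilde z^{-}_\eps)$: the footpoints lie on $\tilde\Lambda_\eps$, where $\dot I=-\lambda(I-\omega_*)-\eps f(0)\frac{\partial g}{\partial\theta}(\theta,s)$ does not vanish, so $I$ is not constant along their inner orbits; worse, for the dissipative inner dynamics the backward orbit of $\tilde z^{-}_\eps$ drifts away from the attractor, so your limit need not even exist (and if your two bracketed terms did tend to zero, your identity would force the divergent integral $\eps\int_{-\infty}^{+\infty}\X^1 I(\tilde\Phi^t_\eps(\tilde z_\eps))\,dt$ to be finite, contradicting your own observation). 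The missing pieces are the FTC integrals along the footpoint orbits. The correct half-line identities, obtained from $I(\tilde{\Phi}^{\pm T}_\eps(\tilde z^{\pm}_\eps))-I(\tilde{\Phi}^{\pm T}_\eps(\tilde z_\eps))\to 0$ together with $\frac{d}{dt}\bigl(I\circ\tilde{\Phi}^t_\eps\bigr)=\eps\,(\X^1 I)\circ\tilde{\Phi}^t_\eps$ (since $\X^0 I=0$), are
\begin{align*}
I(\tilde z^{+}_\eps)-I(\tilde z_\eps)&=\eps\int_0^{+\infty}\Bigl(\X^1 I(\tilde{\Phi}^t_\eps(\tilde z_\eps))-\X^1 I(\tilde{\Phi}^t_\eps(\tilde z^{+}_\eps))\Bigr)\,dt,\\
I(\tilde z_\eps)-I(\tilde z^{-}_\eps)&=\eps\int_{-\infty}^{0}\Bigl(\X^1 I(\tilde{\Phi}^t_\eps(\tilde z_\eps))-\X^1 I(\tilde{\Phi}^t_\eps(\tilde z^{-}_\eps))\Bigr)\,dt.
\end{align*}
Summing these and then replacing $\tilde{\Phi}^t_\eps(\tilde z_\eps)$ by $\tilde{\Phi}^t_0(\tilde z_0)$ and $\tilde{\Phi}^t_\eps(\tilde z^{\pm}_\eps)$ by $\tilde{\Phi}^t_0(\tilde z^{\pm}_0)$ --- both footpoints collapse to the single point $\tilde z^{\pm}_0$ because $\tilde\sigma_0=\mathrm{Id}$ --- is precisely what produces the subtracted term $\X^1 I(\tilde{\Phi}^t_0(\tilde z^{\pm}_0))$ in \eqref{eqn:change_in_I_non_ham}; it is not an a posteriori regularization grafted onto a divergent expression. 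With this repair, the rest of your sketch (exponential decay of the regularized integrand, truncation at $T=O(\log(1/\eps))$, smooth dependence of the NHIM and its fibers on $\eps$ to keep the accumulated error at $O(\eps^2)$) is the standard route of \cite{gidea2021global} and is fine to delegate to that reference.
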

Denote $I\left(\tilde{z}^{\pm}_{\eps}\right)=I^{\pm}_\eps$. 
Applying Theorem \ref{thm:change_in_I} in the case of  \eqref{eq:pertubedfieldepsilon}:
\begin{equation*}
\begin{split}
\mathcal{X}^1=&J\nabla H_1 + (-{\rho} p,0,-{\rho}(I-\omega_*),0)\\
=&J\nabla H_1 + \mathcal{X}_{\rho}
\end{split}
\end{equation*}
we obtain
\begin{equation*}
\begin{split}
I^{+}_\eps-I^{-}_\eps=&\eps\int^\infty_{-\infty}\bigg((J\nabla H_1+\mathcal{X_{\rho}})I(\tilde{\Phi}^t_0(\tilde{z}_0))-(J\nabla H_1+\mathcal{X_{\rho}})I(\tilde{\Phi}^t_0(\tilde{z}^{\pm}_0))\bigg)dt+O(\eps^2)\\
=&\eps\int^\infty_{-\infty} \bigg((J\nabla H_1I)(\tilde{\Phi}^t_0(\tilde{z}_0))-(J\nabla H_1I)(\tilde{\Phi}^t_0(\tilde{z}^\pm_0))\bigg)dt\\
&+\eps\int^\infty_{-\infty}\bigg(\mathcal{X_{\rho}}I(\tilde{\Phi}^t_0(\tilde{z}_0))-\mathcal{X_{\rho}}I(\tilde{\Phi}^t_0(\tilde{z}^{\pm}_0))\bigg)dt+O(\eps^2)\\
=& \eps(\mathcal{S}_1^I+\mathcal{S}_2^I)+O(\eps^2)
\end{split}
\end{equation*}
where
\begin{equation*}
\begin{split}
\mathcal{S}_1^I:=&\int^\infty_{-\infty} \bigg(\lbrace I,H_1\rbrace((\tilde{\Phi}^t_0(\tilde{z}_0))-\lbrace I,H_1\rbrace(\tilde{\Phi}^t_0(\tilde{z}^\pm_0))\bigg)dt\\
=& \int_{-\infty}^{+\infty} \left(\{I,H_1\}(p_0(\tau^*+t), q_0 (\tau^*+t), I, \theta+\omega(I)t,s+t)\right.,\\
   &\left.\qquad  -\{I,H_1\}(0, 0, I, \theta+\omega(I)t,s+t)\right)dt\\
\mathcal{S}_2^I=&\int^\infty_{-\infty}\bigg(\mathcal{X_{\rho}}I(\tilde{\Phi}^t_0(\tilde{z}_0))-\mathcal{X_{\rho}}I(\tilde{\Phi}^t_0(\tilde{z}^{\pm}_0))\bigg)dt,
\end{split}
\end{equation*}
where $\tau^*=\tau^*(I,\theta,s;\rho)$ is a non-degenerate zero of the function \eqref{eqn:yun_minus_yst_potential_rho}.

Since
$\mathcal{X}_{\rho} I=-{\rho}(I-\omega_*)$,  $\tilde{\Phi}^t_0(\tilde{z}_0)=(p_0(\tau^*+t),q_0(\tau^*+t),I,\theta+\omega(I)t, s+t)$ and
$\tilde{\Phi}^t_0(\tilde{z}_0^\pm)=(0,0,I, \theta+\omega(I)t, s+t)$,
we have
\begin{equation*}
\begin{split}
\mathcal{S}_2^I:=&\int^\infty_{-\infty}\bigg(-{\rho} I(\tilde{\Phi}^t_0(\tilde{z}_0))+{\rho} I(\tilde{\Phi}^t_0(\tilde{z}^{\pm}_0))\bigg)dt =0.
\end{split}
\end{equation*}
Thus, we have proved the following result:
\begin{cor}\label{cor:change_in action}
For the perturbation $\X^1=J\nabla H_1+\X_{\rho}$,
\begin{equation}\label{eqn:change_action_dis}
\begin{split}
I^{+}_\eps-I^{-}_\eps=& \eps
\int_{-\infty}^{+\infty} \left(\{I,H_1\}(p_0(\tau^*+t), q_0 (\tau^*+t), I, \theta+\omega(I)t,s+t)\right.\\
   &\left.\qquad  -   \{I,H_1\}(0, 0, I, \theta+\omega(I)t,s+t)
      \right)dt
+O(\eps^2)
\end{split}
\end{equation}
where $\tau^*=\tau^*(I,\theta,s;\rho)$ is a non-degenerate zero of the function \eqref{eqn:yun_minus_yst_potential_rho}.

In the case when $H_1$ is as in \eqref{eqn:vanishing} or \eqref{eqn:non-vanishing}, $\lbrace I,H_1\rbrace=-\frac{\partial H_1}{\partial \theta}=a_{10}f(q)\sin\theta$, where $f(q)=\cos q -1$ or $f(q)=\cos q$, so
\begin{equation}
\label{eqn:change_action_dis_2}
\begin{split}
I^{+}_\eps-I^{-}_\eps=&\eps a_{10}\int^\infty_{-\infty} (\cos (q_0(\tau^*+t))-1)\sin(\theta+\omega(I)t)dt+O(\eps^2).
\end{split}
\end{equation}
\end{cor}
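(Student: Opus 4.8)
The statement is a direct specialization of Theorem~\ref{thm:change_in_I}, so the plan is to apply that result to the concrete perturbation $\X^1 = J\nabla H_1 + \X_\rho$ from \eqref{eq:pertubedfieldepsilon} and then simplify. First I would write, using the convention \eqref{eqn:vf_derivative} identifying a vector field with a first-order differential operator,
\[
\X^1 I = (J\nabla H_1) I + \X_\rho I = \{I, H_1\} + \big(-\rho(I-\omega_*)\big),
\]
so that the integrand in \eqref{eqn:change_in_I_non_ham} splits as a ``Hamiltonian'' piece plus a ``dissipative'' piece, and the whole expression becomes $\eps(\mathcal{S}_1^I + \mathcal{S}_2^I) + O(\eps^2)$, where $\mathcal{S}_1^I$ and $\mathcal{S}_2^I$ are the integrals over $t\in\real$ of the corresponding differences evaluated at $\tilde\Phi^t_0(\tilde z_0)$ and $\tilde\Phi^t_0(\tilde z_0^\pm)$.

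The key observation is that $\mathcal{S}_2^I \equiv 0$. Indeed, $\X_\rho I = -\rho(I-\omega_*)$ depends on the phase-space point only through the action coordinate $I$; since the unperturbed flow $\tilde\Phi^t_0$ preserves $I$ (because $\dot I = 0$ for the rotator) and the homoclinic point $\tilde z_0$ and its footpoints $\tilde z_0^\pm$ all share the same action value $I_0 = I$, we get $\X_\rho I(\tilde\Phi^t_0(\tilde z_0)) = -\rho(I-\omega_*) = \X_\rho I(\tilde\Phi^t_0(\tilde z_0^\pm))$ for every $t$, so the integrand of $\mathcal{S}_2^I$ vanishes pointwise. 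Here $\tilde z_0$ is the unperturbed homoclinic point associated to $\tilde z_\eps$ by Corollary~\ref{cor:GdLM1}, whose $\tau$-coordinate is the non-degenerate zero $\tau^*(I,\theta,s;\rho)$ of \eqref{eqn:yun_minus_yst_potential_rho}; the $O(\eps)$ discrepancy between $\tilde z_\eps$ and $\tilde z_0$, which by \eqref{eqn:zepsz0} affects only the $p,q$-components, is harmless since the expression already carries a prefactor $\eps$ and so is absorbed into the $O(\eps^2)$ remainder. For $\mathcal{S}_1^I$ I would then substitute the explicit unperturbed homoclinic orbit $\tilde\Phi^t_0(\tilde z_0) = (p_0(\tau^*+t), q_0(\tau^*+t), I, \theta+\omega(I)t, s+t)$ and $\tilde\Phi^t_0(\tilde z_0^\pm) = (0,0,I,\theta+\omega(I)t, s+t)$, obtaining \eqref{eqn:change_action_dis}; the improper integral converges because $q_0(\tau+t)\to 0$ exponentially as $t\to\pm\infty$, so the integrand decays exponentially.

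For the two concrete couplings I would compute the Poisson bracket directly: since $H_1 = f(q)g(\theta,t)$ with $g(\theta,t) = a_{00} + a_{10}\cos\theta + a_{01}\cos t$, one has $\{I, H_1\} = -\partial_\theta H_1 = a_{10} f(q)\sin\theta$, so the difference in \eqref{eqn:change_action_dis} equals $a_{10}\big(f(q_0(\tau^*+t)) - f(0)\big)\sin(\theta+\omega(I)t)$. The crucial point is that in both the vanishing case $f(q) = \cos q - 1$ and the non-vanishing case $f(q) = \cos q$ one has $f(q) - f(0) = \cos q - 1$, since the two choices of $f$ differ only by an additive constant; hence in either case the integrand reduces to $a_{10}(\cos q_0(\tau^*+t) - 1)\sin(\theta + \omega(I)t)$, which is \eqref{eqn:change_action_dis_2}. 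The genuine analytic content here is packaged inside Theorem~\ref{thm:change_in_I} (borrowed from \cite{gidea2021global}), so at this level there is no serious obstacle; the only points needing care are the $O(\eps)$ bookkeeping between $\tilde z_\eps$ and $\tilde z_0$ and the exponential convergence of the Melnikov-type integral, both routine.
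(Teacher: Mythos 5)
Your proposal is correct and follows essentially the same route as the paper: apply Theorem \ref{thm:change_in_I} to $\X^1=J\nabla H_1+\X_{\rho}$, split the integral into the Hamiltonian piece $\mathcal{S}_1^I$ and the dissipative piece $\mathcal{S}_2^I$, observe that $\mathcal{S}_2^I=0$ because the unperturbed flow preserves $I$ and $\tilde z_0$, $\tilde z_0^\pm$ share the same action, and then specialize $\{I,H_1\}=a_{10}f(q)\sin\theta$ using that $f(q)-f(0)=\cos q-1$ in both the vanishing and non-vanishing cases. Your explicit remark that the two choices of $f$ differ only by an additive constant, which cancels in the difference, is a slightly more careful statement of the step the paper leaves implicit.
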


\subsection{Change in Angle by the Scattering Map}

We use the following result:

\begin{thm}[Change in Angle by the Scattering Map \cite{gidea2021global}]\label{thm:change_in_theta}
For a general non-conservative perturbation $\X^1$ of \eqref{eqn:H0} like in \eqref{eq:perturbedepsilon},
the change in angle $\theta$ by the scattering map $\tilde{\sigma}_\eps$ is given by:
\begin{equation}
\begin{split}\label{eqn:delta_slow}
\theta(\tilde{z}^{+}_{\eps})-\theta(\tilde{z}^{-}_{\eps})
=&\eps\int_{-\infty}^{+\infty}\mathcal{X}^{1}\theta(\tilde{\Phi}^{t}_{0}(\tilde{z}_{0}))
-\mathcal{X}^{1}\theta(\tilde{\Phi}^{t}_{0}(\tilde{z}^{\pm}_{0})) dt\\
&-\eps\int_{-\infty}^{+\infty} (\mathcal{X}^{1}I(\tilde{\Phi}^{t}_{0}
(\tilde{z}_{0}))-\mathcal{X}^{1}I(\tilde{\Phi}^{t}_{0}(\tilde{z}^\pm_{0})))t dt \cdot \left(\frac{\partial^2 h_{0}}{\partial I^2}(I_0)\right) \\
 &+O(\eps^{2}),
\end{split}
\end{equation}
where $\tilde z_\eps$ and $\tilde z_0$ are given in \eqref{eqn:zepsz0}, and  we denote $I_0=I(\tilde{z}_0)=I(\tilde{z}^\pm_0)$.
\end{thm}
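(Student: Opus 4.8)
The plan is to follow the same strategy used for the change in action (Theorem~\ref{thm:change_in_I} and Corollary~\ref{cor:change_in action}), adapting it to the angle variable $\theta$, which is the slow variable conjugate to $I$. The starting point is the geometric characterization of the scattering map from Remark~\ref{rem:scattering_homoclinic}: for a homoclinic point $\tilde{z}_\eps\in\tilde{\Gamma}_\eps$ with foot points $\tilde{z}^\pm_\eps$, one has $\theta(\tilde{z}^+_\eps)-\theta(\tilde{z}^-_\eps)=\lim_{T_\pm\to\infty}\big[(\theta(\tilde{\Phi}^{T_+}_\eps(\tilde{z}_\eps))-\theta(\tilde{\Phi}^{T_+}_\eps(\tilde{z}^+_\eps)))-(\theta(\tilde{\Phi}^{-T_-}_\eps(\tilde{z}_\eps))-\theta(\tilde{\Phi}^{-T_-}_\eps(\tilde{z}^-_\eps)))\big]$, where the convergence is guaranteed because the orbits through $\tilde{z}_\eps$ and its foot points are forward/backward asymptotic. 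Writing $\theta(\tilde{\Phi}^t_\eps(\tilde{z}_\eps))-\theta(\tilde{\Phi}^t_\eps(\tilde{z}^\pm_\eps))$ as an integral of the difference of the vector fields along the two orbits and expanding in $\eps$ about the unperturbed flow, the leading term is $\eps\int\big(\X^1\theta(\tilde{\Phi}^t_0(\tilde{z}_0))-\X^1\theta(\tilde{\Phi}^t_0(\tilde{z}^\pm_0))\big)dt$, exactly the first integral in~\eqref{eqn:delta_slow}.

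The subtlety — and the origin of the second integral — is that $\theta$ is the variable whose unperturbed evolution is non-trivial ($\dot\theta=\omega(I)=h_0'(I)$), so unlike the case of $I$ one cannot simply differentiate under the integral and collect the first-order term. One must account for the fact that a first-order change in $I$ along the orbit produces, through the term $\dot\theta=h_0'(I)$, a secularly growing ($\propto t$) discrepancy in $\theta$. The way I would carry this out: parametrize the perturbed homoclinic orbit and its foot-point orbits by their initial conditions in $\tilde{\Lambda}_\eps$, use the first variational equation along the unperturbed separatrix-times-rotator flow $\tilde{\Phi}^t_0$, and observe that the $\theta$-component of the variation picks up a contribution $\int_0^t h_0''(I_0)\,\delta I(s)\,ds$ from the coupling between the $I$ and $\theta$ equations in the linearization. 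Since $\delta I(t)=\eps\int_{-\infty}^{t}(\X^1 I(\tilde{\Phi}^u_0(\tilde z_0))-\X^1 I(\tilde{\Phi}^u_0(\tilde z^\pm_0)))du+O(\eps^2)$ converges as $t\to\pm\infty$ (this is precisely the content of the change-in-action formula), integrating by parts — or reorganizing the double integral — converts the $t$-integral of $\delta I$ into $h_0''(I_0)$ times $\int_{-\infty}^{+\infty}(\X^1 I(\tilde{\Phi}^t_0(\tilde z_0))-\X^1 I(\tilde{\Phi}^t_0(\tilde z^\pm_0)))\,t\,dt$, yielding the second term of~\eqref{eqn:delta_slow} with the correct sign.

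Concretely, the key steps in order are: (i) express $\theta(\tilde{z}^+_\eps)-\theta(\tilde{z}^-_\eps)$ via the asymptotic limit of $\theta$-differences along the three orbits, as in Remark~\ref{rem:scattering_homoclinic}; (ii) write each $\theta$-difference as a time integral of the difference of the full vector fields and Taylor-expand in $\eps$ around $\tilde{\Phi}^t_0$, keeping track that $\tilde{z}_\eps=\tilde{z}_0+O(\eps)$ affects only the $(p,q)$ components (cf.~\eqref{eqn:zepsz0}), which does not alter the first-order term; (iii) split the linearized $\theta$-variation into the ``direct'' part $\eps\int(\X^1\theta(\tilde{\Phi}^t_0(\tilde z_0))-\X^1\theta(\tilde{\Phi}^t_0(\tilde z^\pm_0)))dt$ and the ``indirect'' part coming from the $h_0''(I)\,\delta I$ coupling in the variational equation; (iv) use the already-established change-in-action estimate to identify $\delta I$ and perform the integration by parts that produces the weighted integral $\int(\cdots)t\,dt$; (v) check that all the improper integrals converge, which follows from the exponential convergence $\tilde{\Phi}^t_0(\tilde z_0)-\tilde{\Phi}^t_0(\tilde z^\pm_0)\to 0$ as $t\to\pm\infty$ along the separatrix (the pendulum separatrix variables decay like $e^{-|t|}$) together with the fact that the integrands of both relevant integrals are $O(e^{-c|t|})$ and $O(|t|e^{-c|t|})$ respectively, and collect the remainder into $O(\eps^2)$. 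The main obstacle I expect is (iv): handling the secular term rigorously — justifying the interchange of limits and integrals and the integration by parts at $t=\pm\infty$ — requires the uniform convergence of $\delta I(t)$ to its limit and a careful treatment of the boundary terms, but since $\X^1 I(\tilde{\Phi}^t_0(\tilde z_0))-\X^1 I(\tilde{\Phi}^t_0(\tilde z^\pm_0))$ decays exponentially (the only $\theta$-independent, non-decaying piece, the dissipative $\mathcal{X}_\rho$ contribution to $\X^1 I$, cancels exactly as in the computation of $\mathcal{S}_2^I$), the boundary terms vanish and the argument closes. This last cancellation — that the dissipation contributes nothing to the first-order change in $I$, and hence nothing to the secular part of the change in $\theta$ — is the structural feature of the model that makes the formula clean.
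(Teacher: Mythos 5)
The paper does not prove Theorem~\ref{thm:change_in_theta}: it is quoted from \cite{gidea2021global} and used as a black box, so your proposal must be measured against the standard derivation of such formulas (as in \cite{DelshamsLS08a,gidea2021global}). Your overall strategy is the right one: characterize $\theta(\tilde z^{+}_\eps)-\theta(\tilde z^{-}_\eps)$ through the asymptotic convergence of the homoclinic orbit to the orbits of its footpoints, expand to first order in $\eps$, isolate the ``direct'' term $\eps\int\big(\X^1\theta(\tilde\Phi^t_0(\tilde z_0))-\X^1\theta(\tilde\Phi^t_0(\tilde z^{\pm}_0))\big)dt$, and recover the second term from the coupling $\dot\theta=\omega(I)$, $\omega'=\partial^2h_0/\partial I^2$, with a Fubini-type reorganization producing the weight $t$. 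Your observation that the dissipative part of $\X^1 I$ cancels in the difference (both unperturbed orbits carry the same constant $I_0$), so the relevant integrands decay exponentially, is correct and is exactly what makes the improper integrals converge. Your point (iii), that the $O(\eps)$ discrepancy between $\tilde z_\eps$ and $\tilde z_0$ sits only in the $(p,q)$ components and does not affect the first-order terms, is also fine.

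Two steps, as written, do not work. First, the identity in your step (i) is false: both $\theta(\tilde\Phi^{T_+}_\eps(\tilde z_\eps))-\theta(\tilde\Phi^{T_+}_\eps(\tilde z^{+}_\eps))$ and $\theta(\tilde\Phi^{-T_-}_\eps(\tilde z_\eps))-\theta(\tilde\Phi^{-T_-}_\eps(\tilde z^{-}_\eps))$ tend to $0$, so your limit equals $0$, not $\theta(\tilde z^{+}_\eps)-\theta(\tilde z^{-}_\eps)$; the correct starting point is the fundamental theorem of calculus combined with the asymptotics, e.g. $\theta(\tilde z^{+}_\eps)-\theta(\tilde z_\eps)=\int_0^{+\infty}\big(\dot\theta(\tilde\Phi^t_\eps(\tilde z_\eps))-\dot\theta(\tilde\Phi^t_\eps(\tilde z^{+}_\eps))\big)dt$ and the analogous identity on $(-\infty,0]$ with $\tilde z^{-}_\eps$. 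Second, and more substantively, your step (iv) must switch comparison orbits at $t=0$. The secular contribution is $\omega'(I_0)\big[\int_0^{+\infty}\Delta_+(t)\,dt+\int_{-\infty}^{0}\Delta_-(t)\,dt\big]$ with $\Delta_\pm(t)=I(\tilde\Phi^t_\eps(\tilde z_\eps))-I(\tilde\Phi^t_\eps(\tilde z^{\pm}_\eps))$, and to first order $\Delta_+(t)=-\eps\int_t^{+\infty}G(s)\,ds$, $\Delta_-(t)=\eps\int_{-\infty}^{t}G(s)\,ds$, where $G(t)=\X^1 I(\tilde\Phi^t_0(\tilde z_0))-\X^1 I(\tilde\Phi^t_0(\tilde z^{\pm}_0))$; Fubini then yields $-\eps\int_{-\infty}^{+\infty}t\,G(t)\,dt$, i.e. the stated second term with its minus sign. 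Your single function $\delta I(t)=\eps\int_{-\infty}^{t}G+O(\eps^2)$, anchored at $t=-\infty$, tends as $t\to+\infty$ to the (generically nonzero) total change of action, so $\int_{-\infty}^{+\infty}\delta I(t)\,dt$ diverges and the boundary term $T_+\,\delta I(T_+)$ in your integration by parts does not vanish, contrary to the claim closing your step (v); equivalently, the $\theta$-difference between the homoclinic orbit and the backward footpoint orbit grows linearly as $t\to+\infty$, at rate $\omega'(I_0)\big(I^{+}_\eps-I^{-}_\eps\big)=O(\eps)$, and cannot be used on the forward half-line. This footpoint switch is precisely where the finite, two-sided weighted integral comes from; once it is made, the rest of your outline (uniform closeness of perturbed and unperturbed orbits, exponential decay of $G$ and of the $\X^1\theta$-differences, collecting the $O(\eps^2)$ remainder) goes through and reproduces \eqref{eqn:delta_slow}.
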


Denote $\theta\left(\tilde{z}^{\pm}_{\eps}\right)=\theta^{\pm}_\eps$.
Applying Theorem \ref{thm:change_in_theta} in the case of  \eqref{eq:pertubedfieldepsilon}, i.e.,
$\mathcal{X}^1=J\nabla H_1 + \mathcal{X}_{\rho}$ we obtain
\begin{equation}\label{eqn:IA}
\begin{split}
\theta^{+}_\eps-\theta^{-}_\eps=& \eps\int^\infty_{-\infty} \bigg((J\nabla H_1+\mathcal{X_{\rho}})\theta(\tilde{\Phi}^t_0(\tilde{z}_0))-(J\nabla H_1+\mathcal{X_{\rho}})\theta(\tilde{\Phi}^t_0(\tilde{z}^\pm_0))\bigg)dt\\
&-\eps\int^\infty_{-\infty}\left((J\nabla H_1+\mathcal{X_{\rho}})I(\tilde{\Phi}^t_0(\tilde{z}_0))\right .
\\
&\left.\qquad\qquad-(J\nabla H_1+\mathcal{X_{\rho}})I(\tilde{\Phi}^t_0(\tilde{z}^{\pm}_0))\right)tdt\cdot \bigg( \frac{\partial^2h_0 }{\partial I^2}(I)\bigg)
\\
&+O(\eps^2)
\end{split}
\end{equation}
We simplify the first integral above by splitting into two integrals:
\begin{equation*}
\begin{split}
&\eps\bigg[\int^\infty_{-\infty} \bigg((J\nabla H_1\theta)(\tilde{\Phi}^t_0(\tilde{z}_0))-(J\nabla H_1\theta)(\tilde{\Phi}^t_0(\tilde{z}^\pm_0))\bigg)dt\\
&\,+\int^\infty_{-\infty}\bigg(\mathcal{X_{\rho}}\theta(\tilde{\Phi}^t_0(\tilde{z}_0))-\mathcal{X_{\rho}}\theta(\tilde{\Phi}^t_0(\tilde{z}^\pm_0))\bigg)dt\bigg]\\
&= \eps(\mathcal{S}_1^\theta+\mathcal{S}_2^\theta),
\end{split}
\end{equation*}
where
\begin{equation*}
\begin{split}
\mathcal{S}_1^\theta:=&\int^\infty_{-\infty} \bigg(\lbrace \theta,H_1\rbrace(\tilde{\Phi}^t_0(\tilde{z}_0)-\lbrace \theta,H_1\rbrace(\tilde{\Phi}^t_0(\tilde{z}^\pm_0))\bigg)dt\\
=&\int^\infty_{-\infty} \bigg(\lbrace \theta,H_1\rbrace(p_0(\tau^*+t),q_0(\tau^*+t), I,\theta+\omega(I)t,s+t)\\
&-\lbrace \theta,H_1\rbrace
(0,0, I,\theta+\omega(I)t,s+t) \bigg)dt,\\
\mathcal{S}_2^\theta=&\int^\infty_{-\infty}\bigg(\mathcal{X_{\rho}}\theta(\tilde{\Phi}^t_0(\tilde{z}_0))
-\mathcal{X_{\rho}}\theta(\tilde{\Phi}^t_0(\tilde{z}^\pm_0))\bigg)dt,\\
\end{split}
\end{equation*}
where $\tau^*=\tau^*(I,\theta,s;\rho)$ is a non-degenerate zero of the function \eqref{eqn:yun_minus_yst_potential_rho}.

Since $\mathcal{X}_{\rho} \theta=0$
we obtain $\mathcal{S}_2^\theta=0.$


The second integral in \eqref{eqn:IA} can be simplified as we did in Section \ref{sec:scattering_change_in_action} to analyze the change in actions, and thus, combining both parts of \eqref{eqn:IA} proves the following result:

\begin{cor}\label{cor:change_in angle}
For the perturbation $\X^1=J\nabla H_1+\X_{\rho}$ given in  \eqref{eq:perturbedepsilon},
\begin{equation}\label{eqn:change_angle_dis}
\begin{split}
\theta^{+}_\eps-\theta^{-}_\eps=&\eps\int^\infty_{-\infty} \bigg(\lbrace \theta,H_1\rbrace(\tilde{\Phi}^t_0(\tilde{z}_0))-\lbrace \theta,H_1\rbrace(\tilde{\Phi}^t_0(\tilde{z}^\pm_0))\bigg)dt\\
&-\eps\int^\infty_{-\infty} \bigg(\lbrace I,H_1\rbrace(\tilde{\Phi}^t_0(\tilde{z}_0))-\lbrace I,H_1\rbrace(\tilde{\Phi}^t_0(\tilde{z}^\pm_0))\bigg)tdt\cdot \bigg( \frac{\partial^2h_0 }{\partial I^2}(I)\bigg)\\
&+O(\eps^2)\\
&= \eps\int^\infty_{-\infty} \bigg(\lbrace \theta,H_1\rbrace(p_0(\tau^*+t),q_0(\tau^*+t), I,\theta+\omega(I)t,s+t)
\\
&\qquad -\lbrace \theta,H_1\rbrace
(0,0, I,\theta+\omega(I)t,s+t) \bigg)dt\\
&-\eps
\int^\infty_{-\infty} \bigg(\lbrace I,H_1\rbrace(p_0(\tau^*+t),q_0(\tau^*+t), I,\theta+\omega(I)t,s+t)
\\
&\qquad -\lbrace I,H_1\rbrace
(0,0, I,\theta+\omega(I)t,s+t) \bigg)t \,dt \cdot \bigg( \frac{\partial^2h_0 }{\partial I^2}(I_0)\bigg)\\
&+O(\eps^2),
\end{split}
\end{equation}
where $\tau^*=\tau^*(I,\theta,s;\rho)$ is a non-degenerate zero of the function \eqref{eqn:yun_minus_yst_potential_rho}.

In the case when $h_0(I)=\frac{I^2}{2}$ and  $H_1$ is as in \eqref{eqn:vanishing} or \eqref{eqn:non-vanishing}, we have $\frac{\partial^2h_0 }{\partial I^2}=1$,   $\lbrace \theta,H_1\rbrace=0$, and $\lbrace I,H_1\rbrace=-\frac{\partial H_1}{\partial \theta}=a_{10}f(q)\sin\theta$, where $f(q)=\cos q -1$ or $f(q)=\cos q$, so
\begin{equation}\label{eqn:change_angle_dis_2}
\begin{split}
\theta^{+}_\eps-\theta^{-}_\eps=&-\eps a_{10}\int^\infty_{-\infty}  \bigg(\cos(q_0(\tau^*+t)-1)\sin(\theta+\omega(I)t)\bigg) t \,  dt +O(\eps^2).
\end{split}
\end{equation}
\end{cor}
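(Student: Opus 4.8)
The plan is to specialize Theorem~\ref{thm:change_in_theta} to the non-conservative perturbation $\X^1 = J\nabla H_1 + \X_\rho$ of our model, and then to show that --- exactly as in the analysis of the change in action in Section~\ref{sec:scattering_change_in_action} --- the dissipative part $\X_\rho$ contributes nothing to the first-order term, so that only the Hamiltonian coupling survives.

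First I would substitute $\X^1 = J\nabla H_1 + \X_\rho$ into the two integrals in \eqref{eqn:delta_slow}. For the first integral, write $\X^1\theta = (J\nabla H_1)\theta + \X_\rho\theta = \{\theta,H_1\} + \X_\rho\theta$; since the $\theta$-component of $\X_\rho$ in \eqref{eqn:Xeps} is zero, the derivation rule \eqref{eqn:vf_derivative} gives $\X_\rho\theta\equiv 0$, so the dissipative part $\mathcal{S}_2^\theta$ vanishes and the first integral reduces to $\mathcal{S}_1^\theta = \int_{-\infty}^{+\infty}\big(\{\theta,H_1\}(\tilde{\Phi}^t_0(\tilde{z}_0)) - \{\theta,H_1\}(\tilde{\Phi}^t_0(\tilde{z}_0^\pm))\big)\,dt$.

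Next I would treat the second integral in \eqref{eqn:delta_slow}, which carries the coefficient $\partial^2 h_0/\partial I^2(I_0)$ and the integrand $\big(\X^1 I(\tilde{\Phi}^t_0(\tilde{z}_0)) - \X^1 I(\tilde{\Phi}^t_0(\tilde{z}_0^\pm))\big)\,t$. This is precisely the integrand already analyzed for the change in action: writing $\X^1 I = \{I,H_1\} + \X_\rho I$ with $\X_\rho I = -\rho(I-\omega_*)$, and using that $I$ is a constant of motion for the \emph{unperturbed} flow --- so that $I(\tilde{\Phi}^t_0(\tilde{z}_0)) = I(\tilde{\Phi}^t_0(\tilde{z}_0^\pm)) = I_0$ for all $t$ --- the dissipative difference cancels pointwise, leaving only $\{I,H_1\}$. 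Substituting $\tilde{\Phi}^t_0(\tilde{z}_0) = (p_0(\tau^*+t),q_0(\tau^*+t),I,\theta+\omega(I)t,s+t)$ and $\tilde{\Phi}^t_0(\tilde{z}_0^\pm) = (0,0,I,\theta+\omega(I)t,s+t)$, with $\tau^* = \tau^*(I,\theta,s;\rho)$ a non-degenerate zero of \eqref{eqn:yun_minus_yst_potential_rho}, and combining with the previous step yields the general formula \eqref{eqn:change_angle_dis}. Finally I would specialize: for $h_0(I) = I^2/2$ one has $\partial^2 h_0/\partial I^2 \equiv 1$ and $\omega(I) = I$, while for $H_1 = f(q)g(\theta,t)$ with $g(\theta,t) = a_{00} + a_{10}\cos\theta + a_{01}\cos t$ one gets $\{\theta,H_1\} = \partial H_1/\partial p = 0$, which kills the first term, and $\{I,H_1\} = -\partial H_1/\partial\theta = a_{10}f(q)\sin\theta$; since $f(q) - f(0) = \cos q - 1$ for both $f(q) = \cos q - 1$ and $f(q) = \cos q$, the vanishing and non-vanishing cases \eqref{eqn:vanishing}, \eqref{eqn:non-vanishing} collapse to the single expression \eqref{eqn:change_angle_dis_2}.

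The computation is mostly bookkeeping of Poisson brackets once Theorem~\ref{thm:change_in_theta} is in hand; the only substantive point I would emphasize is the structural reason that dissipation drops out of the first-order term --- namely that $\X_\rho$ has a vanishing $\theta$-component, and that the Melnikov integrals are evaluated along the \emph{unperturbed} flow, on which $I$ is conserved, so that $\X_\rho I$ takes equal values at $\tilde{\Phi}^t_0(\tilde{z}_0)$ and $\tilde{\Phi}^t_0(\tilde{z}_0^\pm)$. This is the same mechanism that made the dissipative term vanish in the change-in-action formula, and it is what makes the scattering map of the dissipative system coincide, at first order in $\eps$, with that of the purely Hamiltonian coupling.
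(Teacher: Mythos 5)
Your proposal is correct and follows essentially the same route as the paper: apply Theorem \ref{thm:change_in_theta}, split $\X^1=J\nabla H_1+\X_{\rho}$ in each Melnikov integral, kill the dissipative contributions (via $\X_{\rho}\theta\equiv 0$ in the first integral and the pointwise cancellation of $\X_{\rho}I=-\rho(I-\omega_*)$ along the two unperturbed orbits, exactly as for $\mathcal{S}_2^I$ in the change-of-action computation), and then specialize to $h_0=\frac{I^2}{2}$ and $H_1=f(q)g(\theta,t)$, using that $f(q_0)-f(0)=\cos q_0-1$ in both the vanishing and non-vanishing cases. One cosmetic slip: with the symplectic pairing $(I,\theta)$ one has $\lbrace \theta,H_1\rbrace=\frac{\partial H_1}{\partial I}$ rather than $\frac{\partial H_1}{\partial p}$, though the conclusion $\lbrace\theta,H_1\rbrace=0$ is unaffected since $H_1$ depends on neither $I$ nor $p$.
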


\begin{rem}
We remark that both components $\mathcal{S}^I$,  $\mathcal{S}^\theta$ of the vector field generating the scattering map up to $O(\eps^2)$ only depends on the dissipation $\mathcal{X}_\lambda$  and, therefore, on the parameter $\rho$, through the value $\tau^*=\tau^*(I,\theta,s;\rho)$.
In fact, in the next section we will see that the vector field generating the scattering map is a Hamiltonian vector field in the variables $(I,\theta)$ up to $O(\eps^2)$,  even though the system \eqref{eqn:main_system} is not symplectic but conformally symplectic.
We will show that the scattering map is symplectic in the variables $(I,\theta)$ up to $O(\eps^2)$. Moreover, in the case when $H_1$ is as in \eqref{eqn:vanishing} or \eqref{eqn:non-vanishing}, we will provide an explicit formula for the Hamiltonian vector field that generates the scattering map up to $O(\eps^2)$.
\end{rem}

\subsection{Symplecticity of the scattering map up to $O(\eps^2)$}\label{sec:symplecticity}
In the case that the perturbation is Hamiltonian (which in our case corresponds to $\rho=0$), it was proven in \cite{DelshamsLS08a} that the scattering map is symplectic and is given by
\begin{equation} \label{eqn:scattering_Ham}
\tilde{\sigma}_\eps(I,\theta,s)=\tilde{\sigma}_0(I,\theta,s)+\eps\left(\frac{\partial L^*}{\partial \theta}(I,\theta,s), -\frac{\partial L^*}{\partial I}(I,\theta,s),s \right)+O(\eps^2)
\end{equation}
for some function (Melnikov potential)  $L^*$ which depends on the effect of the Hamiltonian perturbation on the homoclinic orbits of the unperturbed system.
More precisely, let
\begin{equation}\label{eqn:L_cal_Ham}
\begin{split}
  \mathcal{L}(I,\theta,s)=-\int_{-\infty}^{+\infty} &\left(H_1(p_0(t), q_0 (t), I, \theta+\omega(I)t,s+t)\right.\\
   &\left. -H_1(0, 0, I, \theta+\omega(I)t,s+t)\right)dt
\end{split}
\end{equation}
where $\omega(I)=\frac{\partial h_0}{\partial I}(I)$.
Let $\tau^*=\tau^*(I,\theta,s)$ be a non-degenerate critical point of the
function
\[\tau\mapsto \mathcal{L}(I, \theta-\omega(I)\tau,s-\tau)
\]
Then the function $L^*$ referred to in \eqref{eqn:scattering_Ham} is defined by
\begin{equation}\label{L_star_Ham}L^*(I,\theta,s)=\mathcal{L}(I, \theta-\omega(I)\tau^*,s-\tau^*).\end{equation}
An auxiliary function that will be referred to later is the reduced Melnikov potential defined by
\begin{equation}\label{eqn:reduced_Melnikov_Ham}
\mathcal{L}^*(I,\bar{\theta})=L^*(I, \bar{\theta},0)\textrm { for } \bar{\theta}= \theta-\omega(I)s.
\end{equation}
\\
In our case the perturbation is not Hamiltonian, but we will see that, nevertheless, the scattering map is symplectic up to $O(\eps^2)$,  and is given by
\begin{equation}\label{DLS06}
\begin{split}
\mathcal {S}^I= &\frac{\partial L^*_\rho}{\partial \theta}\\
\mathcal {S}^\theta= &-\frac{\partial L^*_\rho}{\partial I},
\end{split}
\end{equation}
for some function $L^*_\rho$ that depends on the effect of the Hamiltonian perturbation on the homoclinic orbits of the unperturbed system and also on the dissipation.
Our  computation is similar to \cite{DelshamsLS08a}.

\begin{prop}\label{cor:generating}
The vector field $\mathcal{S}$ generating the scattering map $\tilde{\sigma}_\eps$ up to $O(\eps^2)$ is of the form
\begin{equation}\label{eqn:S_symplectic}\mathcal{S}(I,\theta,s)=\left({-}J\nabla_{(I,\theta)}L^*_\rho(I,\theta,s),s\right) \end{equation}
for the function $L^*_\rho:V\subset \tilde{\Lambda}_0\to\mathbb{R}$ defined below.
Let
\begin{equation}\label{eqn:L_cal}
\begin{split}
  \mathcal{L}(I,\theta,s)=-\int_{-\infty}^{+\infty} &\left(H_1(p_0(t), q_0 (t), I, \theta+\omega(I)t,s+t)\right.\\
   &\left. -H_1(0, 0, I, \theta+\omega(I)t,s+t)\right)dt
\end{split}
\end{equation}
where $\omega(I)=\frac{\partial h_0}{\partial I}(I)$.
\\
Let
\begin{equation}\label{eqn:A}A=\int_{-\infty}^{+\infty}p_0^2(t)dt.\end{equation}
\\
Let $\tau^*=\tau^*(I,\theta,s;\rho)$ be a non-degenerate critical point of the
function
\[
\tau\mapsto \mathcal{L}(I, \theta-\omega(I)\tau,s-\tau)+\rho(s-\tau) A.
\]
Let $L^*$ be defined by
\[L^*(I,\theta,s)=\mathcal{L}(I, \theta-\omega(I)\tau^*,s-\tau^*).\]
Then the function $L^*_\rho$ is defined by
\[L^*_\rho(I,\theta,s)= L^*(I,\theta,s)+\rho(s- \tau^*) A.\]
\end{prop}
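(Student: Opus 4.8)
\medskip
\noindent\textbf{Proof idea.} The plan is to follow the computation of \cite{DelshamsLS08a} and verify \eqref{eqn:S_symplectic} by showing that the explicit first--order terms $\mathcal{S}^I$, $\mathcal{S}^\theta$ produced in Corollaries~\ref{cor:change_in action} and~\ref{cor:change_in angle} equal $\partial L^*_\rho/\partial\theta$ and $-\partial L^*_\rho/\partial I$ respectively; this is exactly \eqref{DLS06}, hence \eqref{eqn:S_symplectic}. The observation that makes this work is that the dissipative contributions $\mathcal{S}^I_2$, $\mathcal{S}^\theta_2$ in those corollaries \emph{vanish}, so the only way the dissipation enters $\mathcal{S}$ is through the homoclinic phase $\tau^*=\tau^*(I,\theta,s;\rho)$, the non--degenerate zero of \eqref{eqn:yun_minus_yst_potential_rho}.

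First I would introduce $G(I,\theta,s,\tau)=\mathcal{L}(I,\theta-\omega(I)\tau,s-\tau)+\rho(s-\tau)A$, with $\mathcal{L}$ and $A$ as in \eqref{eqn:L_cal} and \eqref{eqn:A}, and check that $\tau^*$ is precisely a non--degenerate critical point of $\tau\mapsto G(I,\theta,s,\tau)$. Substituting $u=t-\tau$ in \eqref{eqn:L_cal}, differentiating in $\tau$, and using that along the separatrix $\dot p_0=-\partial_q h_1$, $\dot q_0=\partial_p h_1$ on $\{h_1=0\}$, one turns $\partial_\tau$ of the $\mathcal{L}$--term into $\int_{-\infty}^{+\infty}\{h_1,H_1\}(p_0(\tau+t),q_0(\tau+t),I,\theta+\omega(I)t,s+t)\,dt$, while $\partial_\tau(\rho(s-\tau)A)=-\rho A$; comparing with \eqref{eqn:yun_minus_yst_potential_rho} identifies $\partial_\tau G=0$ with that equation. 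This legitimizes the definition of $L^*_\rho$ in the statement and records the crucial identity $\partial_\tau G(I,\theta,s,\tau^*)=0$.

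Next I would put the integrals of Corollaries~\ref{cor:change_in action} and~\ref{cor:change_in angle} into intrinsic form: change variables $\sigma=\tau^*+t$ and set $\hat\theta=\theta-\omega(I)\tau^*$, $\hat s=s-\tau^*$. Using $\{I,H_1\}=-\partial_\theta H_1$ and $\{\theta,H_1\}=\partial_I H_1$ together with the chain rule applied to \eqref{eqn:L_cal} --- where the $I$--derivative hits both the explicit $I$--slot and the factor $\omega(I)t$ inside the $\theta$--slot, which is the source of the $\partial^2 h_0/\partial I^2=\omega'(I)$ terms --- I expect to reach $\mathcal{S}^I=\partial_\theta\mathcal{L}(I,\hat\theta,\hat s)$ and $\mathcal{S}^\theta=-\partial_I\mathcal{L}(I,\hat\theta,\hat s)+\omega'(I)\,\tau^*\,\partial_\theta\mathcal{L}(I,\hat\theta,\hat s)$. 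Then, since $L^*_\rho(I,\theta,s)=G(I,\theta,s,\tau^*(I,\theta,s;\rho))$ and $\partial_\tau G$ vanishes at $\tau^*$, the envelope theorem gives $\partial_\theta L^*_\rho=\partial_\theta G|_{\tau=\tau^*}=\partial_\theta\mathcal{L}(I,\hat\theta,\hat s)$ and $\partial_I L^*_\rho=\partial_I G|_{\tau=\tau^*}=\partial_I\mathcal{L}(I,\hat\theta,\hat s)-\omega'(I)\,\tau^*\,\partial_\theta\mathcal{L}(I,\hat\theta,\hat s)$, matching the previous formulas. This yields \eqref{DLS06}, hence \eqref{eqn:S_symplectic}; since $\mathcal{S}$ is then a Hamiltonian vector field in the $(I,\theta)$ variables with $s$ a parameter, $\tilde\sigma_\eps=\mathrm{Id}+\eps\mathcal{S}+O(\eps^2)$ is symplectic in $(I,\theta)$ up to $O(\eps^2)$.

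The main obstacle is the bookkeeping in the last two steps: correctly propagating the shift $\tau^*$ --- which is itself a function of $(I,\theta,s)$ --- through the change of variables and the differentiations, and matching the $\omega'(I)$--terms produced when the frozen unperturbed flow $\tilde\Phi_0^t$ is differentiated with respect to $I$. Conceptually the key is that everything non--Hamiltonian is confined to the scalar equation defining $\tau^*$ (the $-\rho A$ term above) and to the additive summand $\rho(s-\tau^*)A$ in $L^*_\rho$; since $\mathcal{S}^I_2=\mathcal{S}^\theta_2=0$, and since that summand influences $\partial_\theta L^*_\rho$ and $\partial_I L^*_\rho$ only through the vanishing $\partial_\tau G$ contributions, the gradient (hence symplectic) structure \eqref{eqn:S_symplectic} is not destroyed by the dissipation.
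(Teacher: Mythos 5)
Your proposal is correct and follows essentially the same route as the paper: identify $\tau^*$ as a non-degenerate critical point of the augmented Melnikov function $\mathcal{L}(I,\theta-\omega(I)\tau,s-\tau)+\rho(s-\tau)A$, then compute $\partial_\theta L^*_\rho$ and $\partial_I L^*_\rho$ via the chain rule with the critical-point condition killing the $\partial\tau^*$ terms (your ``envelope theorem'' phrasing is exactly the paper's cancellation \eqref{eqn:critical_tau}), and finally match the resulting integrals, after the shift $t\mapsto\tau^*+t$, with the change-in-action and change-in-angle formulas of Corollaries \ref{cor:change_in action} and \ref{cor:change_in angle}, using that the dissipative contributions $\mathcal{S}_2^I=\mathcal{S}_2^\theta=0$. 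The only differences are presentational (direction of the change of variables and the explicit vs.\ packaged chain-rule cancellation), so the argument is sound as written.
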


\begin{proof}

We claim that
\begin{equation}\label{DLS06-diss}
\begin{split}
\mathcal {S}^I= &\frac{\partial L^*_\rho}{\partial \theta}=\frac{\partial L^*}{\partial \theta}-\rho\frac{\partial \tau^*}{\partial \theta} A\\
\mathcal {S}^\theta= &-\frac{\partial L^*_\rho}{\partial I}=-\frac{\partial L^*}{\partial I}+\rho\frac{\partial \tau^*}{\partial I} A.
\end{split}
\end{equation}
\\
The first observation is that the non-degenerate zeroes of the function \eqref{eqn:yun_minus_yst_potential_rho} are the non-degenerate critical points of the function
\begin{equation}\label{eqn:L_critical}
  \tau\in\mathbb{R}\mapsto \mathcal{ L}(I,\theta-\omega(I)\tau, s-\tau)+\rho(s-\tau) A,
\end{equation}
where $\mathcal{L}$ is given by \eqref{eqn:L_cal} and $A$ is given by \eqref{eqn:A}.
To see this, first note that by a change of variables $t-\tau\mapsto t$, we can express $\mathcal{ L}(I,\theta-\omega(I)\tau, s-\tau)$ as
\begin{equation}
\label{eqn:L_change_variables}
\begin{split}
\mathcal{L}(I,\theta-\omega(I)\tau, s-\tau)=-\int_{-\infty}^{+\infty} &\left(H_1(p_0(\tau+t), q_0 (\tau+t), I, \theta+\omega(I)t,s+t)\right.\\
   &\left. -H_1(0, 0, I, \theta+\omega(I)t,s+t)\right)dt
\end{split}
\end{equation}
Differentiating \eqref{eqn:L_critical} with respect  to $\tau$ we obtain
\begin{equation*}
\label{eqn:L_change_variables_diff}
\begin{split}
\int_{-\infty}^{+\infty} &\left(\{h_1,H_1\}(p_0(\tau+t), q_0 (\tau+t), I, \theta+\omega(I)t,s+t)\right.\\
   &\left. -\{h_1,H_1\}(0, 0, I, \theta+\omega(I)t,s+t)\right)dt -\rho A,
\end{split}
\end{equation*}
so the non-degenerate zeroes of this function are the  non-degenerate critical points of \eqref{eqn:L_critical}.
\\
If $\tau^*=\tau^*(I,\theta,s;\rho)$ is a non-degenerate critical point of the
function \eqref{eqn:L_critical}, then by the chain rule it follows that
\begin{equation}
\begin{split}\label{eqn:critical_tau}
0=&\frac{d}{d\tau}\left[\mathcal{L}(I, \theta-\omega(I)\tau,s-\tau)+\rho(s-\tau) A\right]_{\mid\tau=\tau^*}\\
=&-\frac{\partial\mathcal{L}}{\partial \theta}(I, \theta-\omega(I)\tau^*,s-\tau^*)\omega(I)-\frac{\partial\mathcal{L}}{\partial s}(I, \theta-\omega(I)\tau^*,s-\tau^*)-\rho A .
\end{split}
\end{equation}

To compute $\frac{\partial L^*_\rho}{\partial\theta}=\frac{\partial}{\partial \theta}(L^* (I,\theta,s)+\rho(s-\tau^*) A)$  in \eqref{DLS06} we use the chain rule and  \eqref{eqn:critical_tau} to obtain
\begin{equation}
\begin{split}\label{eqn:partial_L*_theta}
\frac{\partial L^*_\rho}{\partial\theta}=&\frac{\partial\mathcal{L}}{\partial \theta}(I, \theta-\omega(I)\tau^*,s-\tau^*)\left(1-\omega(I)\frac{\partial \tau^*}{\partial \theta}\right)\\ &+\frac{\partial\mathcal{L}}{\partial s}(I, \theta-\omega(I)\tau^*,s-\tau^*)\left(-\frac{\partial \tau^*}{\partial \theta}\right)-\rho \frac{\partial \tau^*}{\partial \theta}A\\
=&\frac{\partial\mathcal{L}}{\partial \theta}(I, \theta-\omega(I)\tau^*,s-\tau^*) .
\end{split}
\end{equation}
\\
Applying the latter formula to \eqref{eqn:L_cal},  using $\frac{\partial H_1}{\partial \theta}=-\{I,H_1\}$, and making the change of variable $t-\tau^*\mapsto t$ we obtain
\begin{equation}\label{eqn:sct_action}
\begin{split}
\frac{\partial L^*_\rho}{\partial\theta}=&\int_{-\infty}^{+\infty} \left(\{I,H_1\}(p_0(t), q_0 (t), I, \theta+\omega(I)t-\omega(I)\tau^*,s+t-\tau^*)\right.\\
   &\left.\qquad  -\{I,H_1\}(0, 0, I,\theta+\omega(I)t-\omega(I)\tau^*,s+t-\tau^*)\right)dt\\
   =&\int_{-\infty}^{+\infty} \left(\{I,H_1\}(p_0(\tau^*+t), q_0 (\tau^*+t), I, \theta+\omega(I)t,s+t)\right.\\
   &\left.\qquad  -\{I,H_1\}(0, 0, I, \theta+\omega(I)t,s+t)\right)dt.
\end{split}
\end{equation}
This integral is the same as the integral \eqref{eqn:change_action_dis} that appears in the formula for the change of action by the scattering map up to $O(\eps^2)$.
Therefore, we conclude that:
\[
I_\eps^+-I_\eps^-=\eps\frac{\partial L^*}{\partial \theta}(I,\theta,s)+O(\eps^2)
\]

To compute $-\frac{\partial L^*_\rho}{\partial I}=-\frac{\partial}{\partial I}(L^* (I,\theta,s)+\rho(s-\tau^*) A)$ in \eqref{DLS06}, we use the chain rule and  \eqref{eqn:critical_tau} to obtain
\begin{equation}\begin{split}\label{eqn:partial_L*_I}
-\frac{\partial L^*_\rho}{\partial I}=&-\frac{\partial\mathcal{L}}{\partial I}(I, \theta-\omega(I)\tau^*,s-\tau^*)\\&-\frac{\partial\mathcal{L}}{\partial \theta}(I, \theta-\omega(I)\tau^*,s-\tau^*)\left ( -\frac{\partial \omega}{\partial I} \tau^*-\omega(I)\frac{\partial \tau^*}{\partial I}\right)\\
&- \frac{\partial\mathcal{L}}{\partial s}(I, \theta-\omega(I)\tau^*,s-\tau^*)\left ( -\frac{\partial \tau^*}{\partial I}\right)\\
&+\rho\frac{\partial \tau^*}{\partial I}A\\
=&-\frac{\partial\mathcal{L}}{\partial I}(I, \theta-\omega(I)\tau^*,s-\tau^*)\\
&+\frac{\partial\mathcal{L}}{\partial \theta}(I, \theta-\omega(I)\tau^*,s-\tau^*)\frac{\partial \omega}{\partial I} \tau^*.
\end{split}\end{equation}
\\
We express the two terms in \eqref{eqn:partial_L*_I} as integrals
 \begin{equation}\label{eqn:partial_L*_I_1}
\begin{split}
&-\frac{\partial\mathcal{L}}{\partial I}(I, \theta-\omega(I)\tau^*,s-\tau^*)=\int_{-\infty}^{+\infty} \left(\{\theta,H_1\}(p_0(t), q_0 (t), I, \theta+\omega(I)t-\omega(I)\tau^*,s+t-\tau^*)\right.\\
   &\left.\qquad  -\{\theta,H_1\}(0, 0, I,\theta+\omega(I)t-\omega(I)\tau^*,s+t-\tau^*)\right)dt\\
   &- \int_{-\infty}^{+\infty} \left(\{I,H_1\}(p_0(t), q_0 (t), I, \theta+\omega(I)t-\omega(I)\tau^*,s+t-\tau^*)\right.\\
    &\left.\qquad  -\{I,H_1\}(0, 0, I,\theta+\omega(I)t-\omega(I)\tau^*,s+t-\tau^*)\right)\left ( \frac{\partial \omega}{\partial I}(I)t\right)dt
\end{split}
\end{equation}
\begin{equation}\label{eqn:partial_L*_I_2}
\begin{split}
\frac{\partial\mathcal{L}}{\partial \theta}(I, \theta-\omega(I)\tau^*,s-\tau^*)=&\int_{-\infty}^{+\infty} \left(\{I,H_1\}(p_0(t), q_0 (t), I, \theta+\omega(I)t-\omega(I)\tau^*,s+t-\tau^*)\right.\\
   &\left.\qquad  -\{I,H_1\}(0, 0, I,\theta+\omega(I)t-\omega(I)\tau^*,s+t-\tau^*)\right)dt\\
\end{split}
\end{equation}
Above we used that $\frac{\partial H_1}{\partial \theta}=-\{I,H_1\}$ and $\frac{\partial H_1}{\partial I}=\{\theta,H_1\}$.
\\
Combining \eqref{eqn:partial_L*_I_1} and \eqref{eqn:partial_L*_I_2} in \eqref{eqn:partial_L*_I} we obtain
 \begin{equation}\label{eqn:partial_L*_I_1_2}
\begin{split}
- \frac{\partial L^*_\rho}{\partial I}(I, \theta,s)=&\int_{-\infty}^{+\infty} \left(\{\theta,H_1\}(p_0(t), q_0 (t), I, \theta+\omega(I)t-\omega(I)\tau^*,s+t-\tau^*)\right.\\
   &\left.\qquad  -\{\theta,H_1\}(0, 0, I,\theta+\omega(I)t-\omega(I)\tau^*,s+t-\tau^*)\right)dt\\
   &- \int_{-\infty}^{+\infty} \left(\{I,H_1\}(p_0(t), q_0 (t), I, \theta+\omega(I)t-\omega(I)\tau^*,s+t-\tau^*)\right.\\
    &\left.\qquad  -\{I,H_1\}(0, 0, I,\theta+\omega(I)t-\omega(I)\tau^*,s+t-\tau^*)\right)\left ( \frac{\partial \omega}{\partial I}(I)(t-\tau*)\right)dt
\end{split}
\end{equation}
\\
Making the change of variable $t-\tau^*\mapsto t$ and writing $\frac{\partial \omega}{\partial I}(I)=\frac{\partial^2h_0}{\partial I^2}(I)$ we obtain
 \begin{equation}\label{eqn:sct_angle}
\begin{split}
-\frac{\partial L^*}{\partial I}(I, \theta,s)=&\int_{-\infty}^{+\infty} \left(\{\theta,H_1\}(p_0(\tau^*+t), q_0 (\tau^*+t), I, \theta+\omega(I)t,s+t)\right.\\
   &\left.\qquad  -\{\theta,H_1\}(0, 0, I,\theta+\omega(I)t,s+t)\right)dt\\
   &- \int_{-\infty}^{+\infty} \left(\{I,H_1\}(p_0(\tau^*+t), q_0 (\tau^*+t), I, \theta+\omega(I)t,s+t)\right.\\
    &\left.\qquad  -\{I,H_1\}(0, 0, I,\theta+\omega(I)t,s+t)\right)\left ( \frac{\partial^2 h_0}{\partial I^2}(I)t\right)dt
\end{split}
\end{equation}
\\
Since the integrals in \eqref{eqn:sct_angle} are computed  in terms of the effect of the perturbation on orbits of the unperturbed system, we have that $I$ in
 is constant and equal to $I=I(\tilde{z}_0)=I(\tilde{z}^\pm_0)$, and therefore $\left ( \frac{\partial^2 h_0}{\partial I^2}(I)\right)$ can be taken outside of the second integral obtaining:
 \begin{equation}\label{eqn:sct_angle_2}
\begin{split}
-\frac{\partial L^*_\rho}{\partial I}(I, \theta,s)=&\int_{-\infty}^{+\infty} \left(\{\theta,H_1\}(p_0(\tau^*+t), q_0 (\tau^*+t), I, \theta+\omega(I)t,s+t)\right.\\
   &\left.\qquad  -\{\theta,H_1\}(0, 0, I,\theta+\omega(I)t,s+t)\right)dt\\
   &- \int_{-\infty}^{+\infty} \left(\{I,H_1\}(p_0(\tau^*+t), q_0 (\tau^*+t), I, \theta+\omega(I)t,s+t)\right.\\
    &\left.\qquad  -\{I,H_1\}(0, 0, I,\theta+\omega(I)t,s+t)\right)t dt\cdot \left ( \frac{\partial^2 h_0}{\partial I^2}(I)\right)
\end{split}
\end{equation}
\\
This integral is the same as the integral \eqref{eqn:change_angle_dis} that appears in the formula for the change of angle by the scattering map.

Therefore, we conclude that:
\[
\theta_\eps^+-\theta_\eps^-=-\eps\frac{\partial L_\rho^*}{\partial I}(I,\theta,s)+O(\eps^2)
\]
\end{proof}

Consider the mapping
\[
L^*_\rho(I,\theta,s)=\mathcal{L}(I,\theta-\omega(I)\tau^*,s-\tau^*) +\rho(s-\tau^* )A.
\]
for $\tau^*=\tau^*(I,\theta,s;\rho)$.
Since $\tau^*$ is a critical point for \[\tau\mapsto \mathcal{L}(I,\theta-\omega(I)\tau,s-\tau)+\rho(s-\tau)A ,\] then for every $t'\in\mathbb{R}$,
$\tau^*-t'$ is a critical point for \[\tau\mapsto \mathcal{L}(I,\theta-\omega(I)(\tau+t'),s-(\tau+t'))+\rho(s-(\tau+t'))A.\]
Then, denoting $Z=(I,\theta,s;\rho)$ and $Z'=(I,\theta-\omega(I)t',s-t';\rho)$, we have
\[
\tau^*(Z')=\tau^*(Z)-t'.
\]
Therefore
\begin{equation}\label{eqn:L_star_invariance}
\begin{split}
L^*_\rho(I,\theta-\omega(I)t',s-t')=&\mathcal{L}(I,\theta-\omega(I)(\tau^*(Z')+t'),s-(\tau^*(Z')+t'))\\
&+\rho(s-(\tau^*(Z')+t')A\\
=&\mathcal{L}(I,\theta-\omega(I)(\tau^*(Z)-t'+t'),s-(\tau^*(Z)-t'+t'))\\
&+\rho(s-(\tau^*(Z)-t'+t'))A\\
=&L^*_\rho(I,\theta,s).
\end{split}
\end{equation}
Making $t'=s$ in \eqref{eqn:L_star_invariance} we obtain
\[
L^*_\rho(I,\theta,s)=L^*_\rho(I,\theta-\omega(I)s,0).
\]
This says that, while the function $L^*_\rho$  nominally depends on three variables $(I,\theta,s)$, in fact it depends on the variable $I$ and the linear combination
$\theta-\omega(I)s$, and is therefore a function of two independent variables $I$ and $\bar{\theta}= \theta-\omega(I)s$.
Thus, we define the reduced   Melnikov potential by:
\begin{equation}\label{eqn:reduced_Melnikov}
\mathcal{L}^*_\rho(I,\bar{\theta})=\mathcal{L}^*_\rho(I, \bar{\theta},0)=\mathcal{L}(I,\bar{\theta}-\omega(I)\tau^*,-\tau^*) -\rho\tau^*A, \textrm { for } \bar{\theta}= \theta-\omega(I)s.
\end{equation}

The reduced Melnikov potential allows to compute the scattering map associated to the time-$2\pi$ map associated to a surface of section
$\{s=s^*\}$; more precisely,  the trajectories of the scattering map are given by the  $\eps$-time
of  the Hamiltonian $-\mathcal{L}^*_\rho$ up to order $O(\eps^2)$, as we shall see below.

\subsection{Growth of action by the scattering map}
\label{sec:growth}
We reduce the dynamics of the flow $\tilde{\Phi}^t_\eps$ to the dynamics of the Poincar\'e first return map to the surface of section
\[
\Sigma=\{(p,q,I,\theta,s)\,| s=s^*\}
\]
for some choice of $s^*\in\mathbb{T}^1$.

The NHIM $\tilde{\Lambda}_\eps$ for $\tilde{\Phi}^t_\eps$ in the extend phase space yields the NHIM $\Lambda_\eps$ for $f_\eps$ in $\Sigma$.
In particular $\Lambda_\eps$ is invariant under $f_\eps$.

The scattering map $\tilde{\sigma}_\eps$, which is defined on the domain  $\tilde{U}\subseteq\tilde{\Lambda}_\eps$, yields a scattering map $\sigma_\eps$
defined on the following  domain in $\Lambda_\eps=\Lambda_0$:
\[
U=\{(I,\bar{\theta})\,|\, (I,\theta,s^*)\in\tilde{U}\textrm{ for }\theta=\bar{\theta}+\omega(I)s^*\},
\]
The scattering map $\sigma_\eps$ is given in the variables $(I,\bar{\theta})$ by (see \cite{Delshams_Schaefer_2018}):
\begin{equation}\label{eqn:scattering_section}
\sigma_\eps(I,\bar{\theta})=\sigma_0(I,\bar{\theta})-\eps J\nabla \mathcal{L}^*_\rho(I,\bar{\theta})+O(\eps^2),
\end{equation}
where  $\sigma_0=\textrm{Id}$.
In particular, the scattering map $\sigma_\eps$ is symplectic up to $O(\eps^2)$.

By Theorem 3.11 in \cite{GideaLlaveSeara20-CPAM}, whenever $J\nabla \mathcal{L}^*_\rho (z_0)\neq 0$ for some point $z_0\in\Lambda_0$,
there exists a $O(1)$-family of solutions $\gamma_{z}(t)$ of the differential equation
\[
\dot{z}=-J\nabla \mathcal{L}^*_\rho (z)
\]
for $z$ in a $O(1)$-neighborhood of $z_0\in U\subseteq \Lambda_0$, and $t$ in some interval
$[T_1(z),T_2(z)]\subset\mathbb{R}$ depending on $z$, such that for each path $\gamma_{z}$,
there is an orbit of the scattering map $\sigma_\eps$ that follows closely that path.

If, in addition, we have that
$\mathcal{S}^{I}(z_0)=\frac{\partial \mathcal{L}^*_\rho}{\partial\bar\theta}(z_0)>0$, then the family of paths $\gamma_{z}$ can be chosen so that
 the corresponding orbits of the scattering map   $\sigma_\eps$ along $\gamma_z$ have the property that $I$ increases by $O(\eps)$ for each application of $\sigma_\eps$.

Consequently,  letting $z_0=(I_0,\theta_0)$ there exist $\theta_1<\theta_0<\theta_2$, $I_1<I_0<I_2$, and  a `strip' of the form
\begin{equation}\label{eqn:strip}
S= \{\gamma_z(t)\,|\, z=(I_0, \theta)\,|\, \theta\in [\theta_1,\theta_2], \, t\in [T_1(z), T_2(z)]\}\subseteq U
\end{equation}
with $\gamma_z(T_1(z))=I_1$ and $\gamma_z(T_2(z))=I_2$, such that the  following properties hold:
There exist $c>0$, such that for every $\delta=O(\eps)>0$ and every path $\gamma_z(t)$ contained in $S$, there exists an orbit $(z_n)_{n=0,\ldots, N}$ of $\sigma_\eps$ and $0=t_0<t_1<\ldots<t_N=T$ with $t_i=\eps i$ for all $i$, such that
\begin{equation}\label{eqn:scattering_in_strip}
\begin{split}
z_{i+1}=\sigma_\eps(z_i),\\
I(z_{i+1})-I(z_i)>c\eps, \textrm{ for } i=0,\ldots,N-1,\\
d(z_i,\gamma_z(t_i))<\delta, \textrm{ for } i=0,\ldots,N.
\end{split}
\end{equation}

\subsection{Scattering map in the case of vanishing and non-vanishing perturbation}
\label{sec:scattering_vanishing_non-vanishing}

For both the  vanishing and non-vanishing perturbations:
\begin{equation*}
\begin{split}H_1(p,q,I,\theta,s)=&(\cos(q)-1)(a_{00}+a_{10}\cos(\theta)+a_{01}\cos(s))\\
H_1(p,q,I,\theta,s)=&\cos(q)(a_{00}+a_{10}\cos(\theta)+a_{01}\cos(s))
\end{split}
\end{equation*}
we have the same expression for the Melnikov potential
\begin{equation}
\label{eqn:potential_vanishing}
\begin{split}
\mathcal{L}(I,\theta,s)=&-\int_{-\infty}^{+\infty} (\cos(q_0(t))-1)(a_{00}+a_{10}\cos(\theta+It)+a_{01}\cos(s+t)) dt\\
=&-\int_{-\infty}^{+\infty} (\cos(\arctan e^{2t})-1)(a_{00}+a_{10}\cos(\theta+It)+a_{01}\cos(s+t)) dt.
\end{split}
\end{equation}
Above we have used the parametrization \eqref{eqn:separatrix} of the separatrix.
Since $\frac{p_0^2}{2}+(\cos q_0(t)-1)=0$, in the above integral we can alternatively write $\cos(q_0(t))-1=-\frac{p_0^2}{2}=-\frac{2}{\cosh^2 (t)}$.
It turns out that (see \cite{delshams2000splitting,delshams2017arnold})
\begin{equation}
\label{eqn:potential_vanishing_1}\begin{split}
\mathcal{L}(I,\theta,s)=&A_{00}+A_{10}(I) \cos(\theta)+A_{01}(I)\cos(s),\textrm { where }\\
A_{00}=&4a_{00},\quad A_{10}=\frac{2\pi I a_{10}}{\sinh(\frac{\pi I}{2})},\quad A_{01}= \frac{2\pi  a_{01}}{\sinh(\frac{\pi }{2})}.
\end{split}
\end{equation}

In \cite{delshams2017arnold} the reduced Melnikov function $\mathcal{L}^*$ defined by \eqref{eqn:reduced_Melnikov_Ham}, which corresponds to the Hamiltonian perturbation only,  is computed explicitly. The level curves of $ \mathcal{L}^*$ are shown in Fig.~\ref{fig:Delshams_Schaefer}. One can find explicitly regions of size $O(1)$ in $\Lambda_\eps$ where  $\frac{\partial \mathcal{L}^*}{\partial\bar\theta}(I,\bar{\theta})>c_1>0$, for some $c_1>0$.
\\
In our case, when the system is also subject to the dissipative perturbation $\X_\lambda$, the reduced Melnikov potential $\mathcal{L}^*_\rho$, given by \eqref{eqn:reduced_Melnikov}, is $O(\rho)$-close to the reduced Melnikov potential $\mathcal{L}^*$ corresponding to the Hamiltonian perturbation. This implies that, for $\rho$ sufficiently small,
there exists a region of $O(1)$ in $\Lambda_\eps$ where  $\frac{\partial \mathcal{L}^*_\rho}{\partial\bar\theta}(I,\bar{\theta})>c_2>0$, for some $0<c_2<c_1$.
In the case when $\rho=\frac{\bar{\rho}}{\log(1/\eps)}$, it follows that there exists $\eps_2>0$ such that, for all $0<\eps<\eps_2$, we have that $\frac{\partial \mathcal{L}^*_\rho}{\partial\bar\theta}(I,\bar{\theta})>c_2>0$ on the aforementioned  region. This region can be used to  define a strip $S$  as in \eqref{eqn:strip},
where the scattering map increases the action $I$ by  $c\eps$ at each step, as in \eqref{eqn:scattering_in_strip}, for some $0<c<c_2$.
\begin{figure}
\centering
\includegraphics[width=0.35\textwidth]{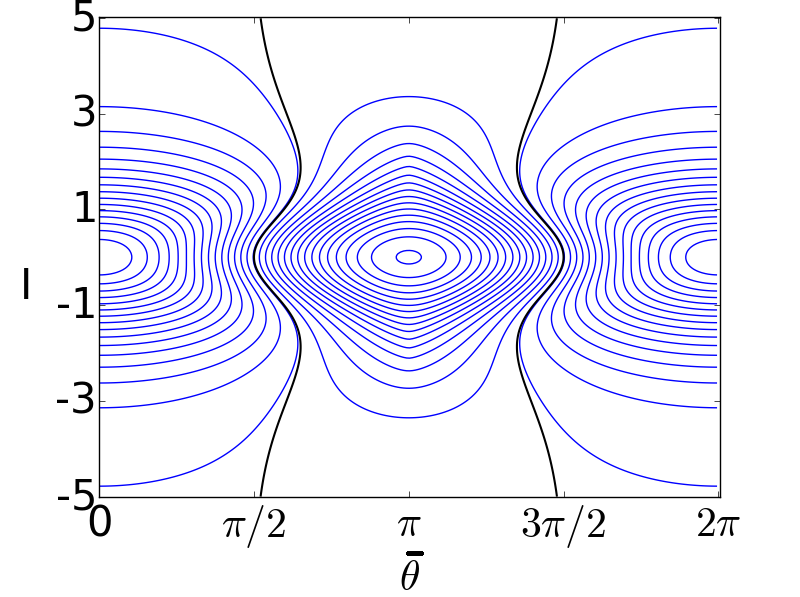}
\caption{Level sets of $\mathcal{L}^*$. (Credit A. Delshams and R. Schaefer)}
\label{fig:Delshams_Schaefer}
\end{figure}

\section{Proof of Theorem \ref{thm:main}}
\label{sec:proofs}

\subsection{The case of vanishing perturbation}\label{sec:proof_case1}
Choose $\omega_*$ such that $I_1 <\omega_* <I_2$, where $I_1$, $I_2$ are as in Section \ref{sec:growth}.
There is an invariant circle $A_\eps=\{I=\omega_*\}$ in $\Lambda_\eps$, as defined in Section \ref{sec:A_eps}, which is a global attractor for $f_\eps$ on $\Lambda_\eps$.
The circle $A_\eps$ is a NHIM for $f_\eps$ restricted to $\Lambda_\eps$, and has only stable manifold $W^\st_{\Lambda_\eps}(A_\eps)$ which is the whole manifold $\Lambda_\eps$.
$W^\st_{\Lambda_\eps}(A_\eps)$ is foliated  by stable leaves
\[
W^\st_{\Lambda_\eps}(A_\eps)=\bigcup_{y\in A_\eps} W^\st_{\Lambda_\eps}(y) \textrm { with } y=(\omega_*,\theta) \in A_\eps.
\]
From \eqref{eqn: asymptotics_map} we have that each stable leaf is a  slanted line
\[
W^\st_{\Lambda_\eps}(\omega_*,\theta_0)=\left\{(I,\theta(I))\in\Lambda_\eps\,|\, \theta(I)=\theta_0-\frac{1}{\lambda}(I-\omega_*)\right\}
\]
Since $\frac{1}{\lambda}\gg 1$, the slope of these lines (as a function of $\theta$) is $-\lambda$, so the stable leaves are nearly horizontal lines.
See Fig. \ref{fig:attractor}.

\begin{figure}
\centering
\includegraphics[width=0.35\textwidth]{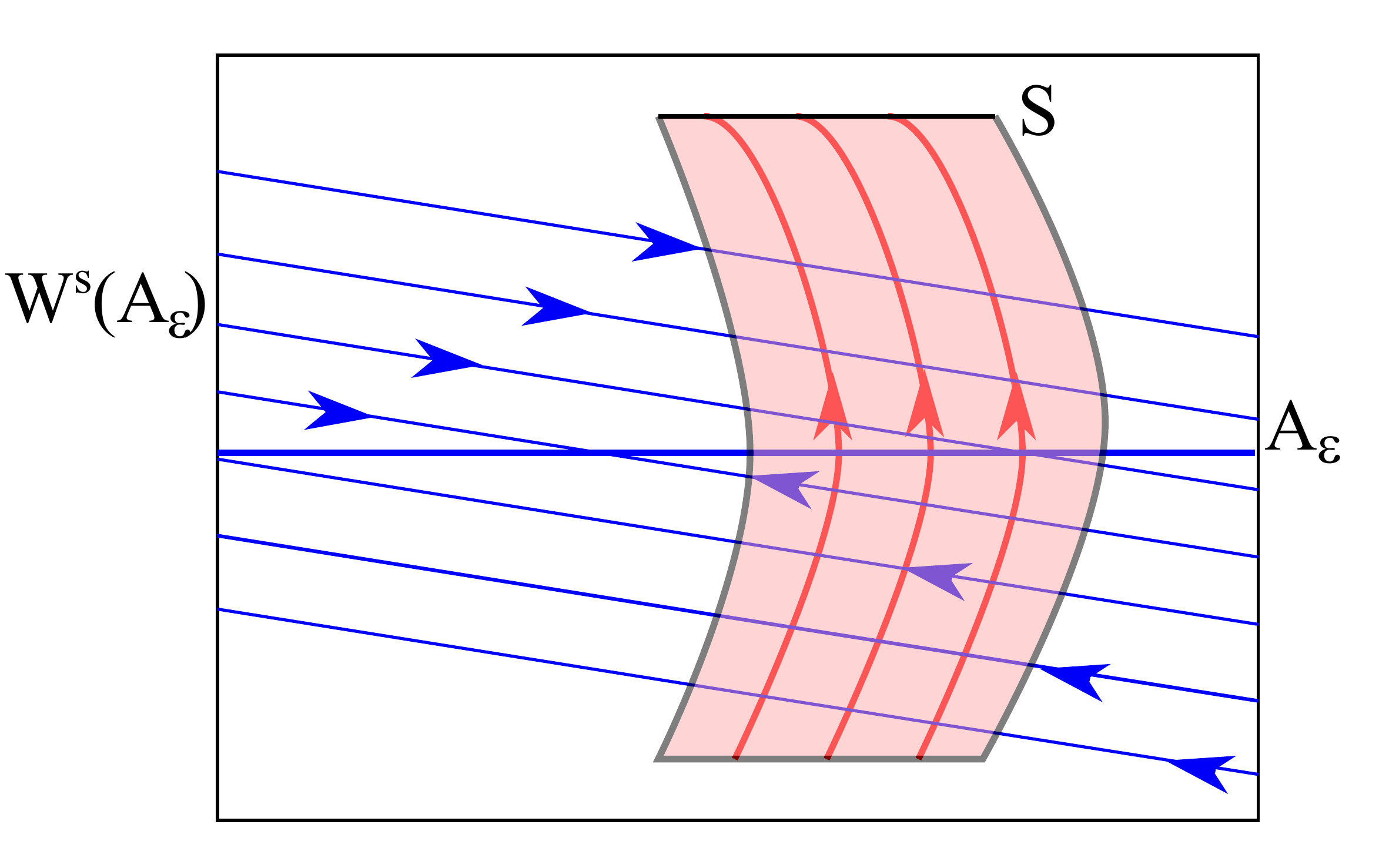}
\caption{The attractor $A_\eps$ inside the NHIM $\Lambda_\eps$}
\label{fig:attractor}
\end{figure}

For $z\in W^\st_{\Lambda_\eps}(y)$, by the equivariance property of the stable fibers we have $f_\eps^k(z)\in  W^\st_{\Lambda_\eps}(f_\eps^k(y))$, for all $k>0$.
Given some initial point $(I_0,\theta_0)$, let $f_\eps^k(I_0,\theta_0)=(I_k,\theta_k)$.
From \eqref{eqn:inner-vanishing}, we deduce
\begin{equation}\label{eqn:inner-k}
\begin{split}
\theta_k=&\theta_0+\frac{1}{\lambda}(I_0-\omega_*)(1+\ldots+e^{-2\pi(k-1)\lambda})(1-e^{-2\pi\lambda})+2\pi k\omega_*\\
=&\theta_0+\frac{1}{\lambda}(I_0-\omega_*)(1-e^{-2\pi k\lambda})+2\pi k\omega_*\\
=&\theta_0+(I_0-\omega_*) 2\pi k + 2\pi k\omega_*+O(k^2\lambda)\\
=&\theta_0+2\pi k I_0(1+O(k\lambda)).
\end{split}
\end{equation}
Recall that $\lambda=\eps\rho(\eps)=\eps\frac{\bar \rho}{\log (\frac{1}{\eps})}$, hence the relative error term in   \eqref{eqn:inner-k} is
$O(k\frac{\bar \rho \eps}{\log (\frac{1}{\eps})})$,
so $k$ iterations of the inner map change the angle coordinate by approximately $2\pi k I_0 \, (\textrm{mod}\, 2\pi )$ provided that $k\frac{\bar \rho\eps}{\log (\frac{1}{\eps})}\ll 1$.

Consider the strip $S$ defined in \eqref{eqn:strip}, where the scattering map is increasing $I$ by $O(\eps)$ at each step.
Provided that  $|I_1-I_2|$ is suitably small (but independent of $\eps$),  there exists $k_{\textrm{max}}>0$ such that, whenever $z\in   S$ we have $f_\eps^k(z)\in S$ for some $k\leq k_{\textrm{max}}$.
That is, each point $z$ in the strip returns to the strip in a maximum of $k_{\textrm{max}}$ iterates.
This implies that for a time $T=T_0\log(1/\eps)$, with $\eps>0$ small, each point $z$ in the strip returns to the strip for at least $\lfloor \frac{T_0\log(1/\eps)}{ k_{\textrm{max}}}\rfloor$ times.

One easily obtains from \eqref{eqn:inner-vanishing}:
\[
I_k=(I_0-\omega_*)e^{-2\pi k \lambda}+\omega_* .
\]
Consequently, if  $z=(I_0,\theta_0) \in W^\st_{\Lambda_\eps}(y)$, $y=(\omega_*,\theta) \in A_\eps$,  is a point at a $d_I$-distance $d_0\leq d_\textrm{max}$ from $y$,  where $d_\textrm{max}=\max\{|I_1-\omega_*|,|I_2-\omega_*|\}$,
then $f_\eps ^k(z)\in W^\st_{\Lambda_\eps}(f_\eps^k(y))$ is at a $d_I$-distance at most
\[
d_0\cdot e^{-2\pi\lambda k}=d_0\cdot e^{-2\pi\frac{\eps}{\log(1/\eps)}\bar \rho k}
\]
from $A_\eps$ after $k$-iterates.
For points with initial $I_0$ above $\omega_*$ the $I$-coordinate decreases at each iterate, and for points with initial $I$ below $\omega_*$ the $I$-coordinate increases at each iterate.
Therefore the loss in $I$ after $k$-iterates, for points with initial  $I_0>\omega_*$, is
\[
I_0-I_k=d_0\cdot (1-e^{-2\pi\lambda k})=d_0\cdot (1-e^{-2\pi\frac{\eps}{\log(1/\eps)}\bar \rho k}).
\]
Hence the maximum loss in the action coordinate $I$ of a point $z$ after $k$ iterates, where $2\pi k \leq T_0\log(1/\eps)$,  is
\[
d_\textrm{max}\cdot(1-e^{-\eps \bar \rho T_0}).
\]

On the other hand, for each $z\in S$ we can apply a scattering map $\sigma_\eps$ to $z$.
The effect of the scattering map is an increase in the action coordinate $I$ by $O(\eps)$.
We recall that there exists $c>0$ such that
\[
I(\sigma_\eps(z))-I(z)>c\eps, \textrm { for all } z\in S.
\]
Thus, starting with a point $z\in S$, applying the scattering map $\sigma_\eps$ to $z$, and then applying the inner map $(f_\eps)_{\mid \Lambda_\eps}$ for $k$ iterates  with $2\pi k\leq T_0\log(1/\eps)$,  until $(f_\eps)^k(\sigma_\eps(z))\in S$, we obtain a net growth  in $I$ that is at least
\[
c\eps-d_{\textrm{max}}\cdot(1-e^{-\eps\bar \rho T_0}).
\]
We want that the net growth is at least $c\eps/2$, that is
\[
c\eps-d_{\textrm{max}}\cdot(1-e^{-\eps\bar \rho T_0})>\frac{c\eps}{2}.
\]
This is equivalent to
\[
e^{-\eps\bar \rho T_0}>1-\frac{c\eps}{2d_{\textrm{max}}}.
\]
Taking the logarithm of both sides we obtain
\[
T_0<\frac{\log\left (1-\frac{c\eps}{2d_{\textrm{max}}} \right)}{-\eps \bar \rho}.
\]
By L'Hopital rule
\[
\lim_{\eps\to 0}\frac{\log\left (1-\frac{c\eps}{2d_{\textrm{max}}} \right)}{-\eps \bar\rho}=\frac{c}{2 d_\textrm{max}\bar\rho}.
\]
This means that, in order to be able to achieve a growth in $I$ of at least $c\eps/2$ per step, for all $\eps$ sufficiently small, we need to choose $\bar \rho$ small enough so that
\begin{equation}\label{}
\bar \rho < \frac{c}{2 d_\textrm{max} T_0}.
\end{equation}

With these choices, we obtain  orbits of the iterated function system (IFS) generated by $\{f_\eps, \sigma_\eps\}$
of the form
$z_{n+1}=f_\eps^{k(n)}\circ \sigma_\eps(z_n)$ with $k(n)=O(\log(1/\eps))$, such that $I$ increases by $c\eps/2$ from $z_n$ to $z_{n+1}$.
In $O(\frac{1}{\eps}\log\left(\frac{1}{\eps}\right))$ steps, such orbits  increase $I$ by $O(1)$.
In Section \ref{sec:existence_of_pseudo-orbits} we use these orbits of the IFS to produce diffusing pseudo-orbits as claimed as in Theorem \ref{thm:main}.
\qed

\begin{rem}
When a point $z\in\Lambda_\eps$ is of action coordinate $I_0<\omega_*$ below that of the attractor $A_\eps$, applying the inner dynamics $(f_\eps)_{\Lambda_\eps}$ to $z$ moves the point towards the attractor, and hence increases $I$. Thus, the effect of the inner dynamics and the effect of the scattering map concur towards increasing $I$.  The situation reverses when the action coordinate of $z$ is above that of the attractor, in which case the effect of the scattering map is opposed to the effect of the inner dynamics.
\end{rem}

\subsection{The case of non-vanishing perturbation}\label{sec:proof_case2}
The evolution of the $I$- and $\theta$-variables under the  inner dynamics on $\tilde{\Lambda}_\eps$  is given by:
\begin{equation}\label{eq:model2}
\begin{cases}
      \dot{I}_\lambda(t)=-\lambda({I_\lambda}(t)-\omega_*)+\eps a_{10}\sin\theta_\lambda(t)\\
      \dot{\theta}_\lambda(t)=I_\lambda(t).
\end{cases}
\end{equation}
\\
Denote the general solution of the system   \eqref{eq:model2} by  $z_\lambda(t)=(I_\lambda(t),\theta_\lambda(t))$.
\\
Setting   $\lambda=0$ yields:
\begin{equation}\label{eq:model20}
 \begin{cases}
      \dot{I}_0(t)=\eps a_{10}\sin\theta_0(t)\\
      \dot{\theta}_0(t)=I_0(t),
    \end{cases}
\end{equation}
with general solution denoted $z_0(t)=(I_0(t),\theta_0(t))$; see Fig. \ref{fig:inner_wo_d}.
Note that  \eqref{eq:model20}
is a Hamiltonian system with Hamiltonian (energy function)
\begin{equation}\label{eqn:K_Hamiltonian}
K(I,\theta)= \frac{I^2}{2}+\eps a_{10}(\cos \theta -1).
\end{equation}
\begin{figure}
\centering
\includegraphics[width=0.35\textwidth]{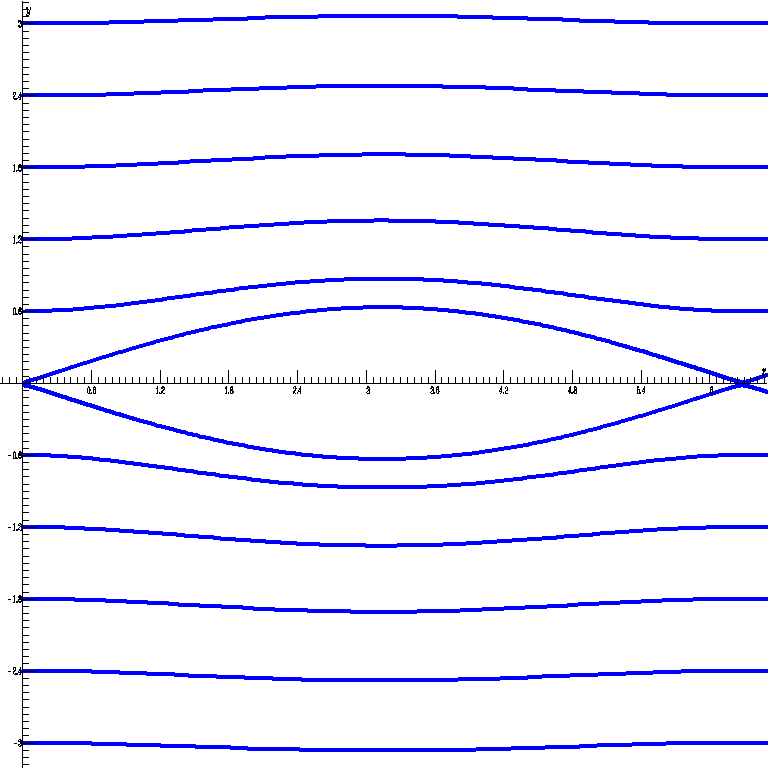}
\caption{The inner dynamics without dissipation.}
\label{fig:inner_wo_d}
\end{figure}
\\
The solutions $(I_0(t),\theta_0(t))$  of the system \eqref{eq:model20} satisfy
\[
K(I_0(t),\theta_0(t))=K(I_0(0),\theta_0(0))
\]
therefore, the application of the inner dynamics does not change the level sets $\{K=\textrm{const.}\}$ in the case $\lambda=0$.
On the other hand, the variable $I$  may change by up to $O(\eps^{1/2})$ by one application of the  inner dynamics \eqref{eq:model20}.
At the same time, the change in $I$ by one application of the  scattering map is $O(\eps)\ll O(\eps^{1/2})$.  Therefore, instead of comparing the effects on the action $I$ by the scattering map  and by the inner dynamics, as in Section \ref{sec:proof_case1}, in this case we want to compare the effects on the energy $K$ by  the scattering map  and by the inner dynamics.
%
%
%
%
The next lemma gives the change in  the energy $K$ that we obtain after one application of the scattering map:
\begin{lem}\label{lem:scattering_K}
Let $\tilde z_\eps ^+=\sigma_\eps (\tilde z_\eps^-)$, where $\tilde z_\eps ^+=(I^+_\eps,\theta^+_\eps,s)$ and  $\tilde z^-_\eps =(I^-_\eps,\theta^-_\eps,s)$. Then:
\begin{equation}
\begin{split}
K(\tilde z_\eps^+)-K(\tilde z^-_\eps)&=I_0 (I^{+}_\eps-I^-_\eps) +O(\eps^2)\\
&=\eps I_0 \frac{\partial \mathcal{L}^*_\rho}{\partial \theta}(I,\theta,s)  +O(\eps^2)
\\&=\eps a_{10} I_0
\int^\infty_{-\infty} (\cos (q_0(\tau^*+t))-1)\sin(\theta+\omega(I)t)dt+O(\eps^2),
\end{split}
\end{equation}
where $I_0=I(\tilde{z}_0)$.
\end{lem}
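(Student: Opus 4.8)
The plan is to expand $K(\tilde z^+_\eps)-K(\tilde z^-_\eps)$ directly from the definition $K(I,\theta)=\tfrac{I^2}{2}+\eps a_{10}(\cos\theta-1)$ in \eqref{eqn:K_Hamiltonian} and keep only the order-$\eps$ contribution, discarding everything that is $O(\eps^2)$, so that the surviving term comes solely from the quadratic-in-$I$ part of $K$. Writing $\tilde z^\pm_\eps=(I^\pm_\eps,\theta^\pm_\eps,s)$, I would split
$$K(\tilde z^+_\eps)-K(\tilde z^-_\eps)=\frac{(I^+_\eps)^2-(I^-_\eps)^2}{2}+\eps a_{10}\big(\cos\theta^+_\eps-\cos\theta^-_\eps\big),$$
and handle the two pieces separately.

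For the $\theta$-piece, Corollary \ref{cor:change_in angle} gives $\theta^+_\eps-\theta^-_\eps=O(\eps)$, so by the mean value theorem $\cos\theta^+_\eps-\cos\theta^-_\eps=O(\eps)$ and the whole term is $O(\eps^2)$, hence negligible. For the $I$-piece I would factor $\tfrac{(I^+_\eps)^2-(I^-_\eps)^2}{2}=\tfrac12(I^+_\eps+I^-_\eps)(I^+_\eps-I^-_\eps)$. By Corollary \ref{cor:change_in action}, $I^+_\eps-I^-_\eps=O(\eps)$; and since $I(\tilde z_\eps)=I=I_0$ (by \eqref{eqn:zepsz0} the $O(\eps)$-error affects only the $p,q$ components), while the perturbed footpoints are $O(\eps)$-close to the unperturbed ones $\tilde z^\pm_0=(0,0,I,\theta,s)$ and $\tilde\sigma_\eps=\mathrm{Id}+O(\eps)$, one has $I^\pm_\eps=I_0+O(\eps)$, whence $\tfrac12(I^+_\eps+I^-_\eps)=I_0+O(\eps)$. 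Multiplying these two expansions, the cross term is $O(\eps)\cdot O(\eps)=O(\eps^2)$, leaving $\tfrac{(I^+_\eps)^2-(I^-_\eps)^2}{2}=I_0(I^+_\eps-I^-_\eps)+O(\eps^2)$. Adding back the negligible $\theta$-piece yields the first claimed identity $K(\tilde z^+_\eps)-K(\tilde z^-_\eps)=I_0(I^+_\eps-I^-_\eps)+O(\eps^2)$.

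The remaining two equalities then follow simply by substituting the formula for the change in action established earlier. By Proposition \ref{cor:generating} (see \eqref{eqn:sct_action}), $I^+_\eps-I^-_\eps=\eps\,\tfrac{\partial L^*_\rho}{\partial\theta}(I,\theta,s)+O(\eps^2)$, and by \eqref{eqn:change_action_dis_2} this equals $\eps a_{10}\int_{-\infty}^{+\infty}(\cos q_0(\tau^*+t)-1)\sin(\theta+\omega(I)t)\,dt+O(\eps^2)$, with $\tau^*=\tau^*(I,\theta,s;\rho)$ the non-degenerate zero of \eqref{eqn:yun_minus_yst_potential_rho}; since $I_0=O(1)$, multiplying through by $I_0$ preserves the $O(\eps^2)$ remainder, giving the last two displayed identities.

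The computation is essentially a two-term Taylor expansion, so no deep difficulty is expected; the only point needing a moment's care is confirming that the cross term $\big(\tfrac12(I^+_\eps+I^-_\eps)-I_0\big)(I^+_\eps-I^-_\eps)$ is genuinely $O(\eps^2)$ rather than $O(\eps)$, which requires the $O(\eps)$-expansions of both the footpoint $I$-coordinates and of the action increment to hold with error constants uniform in $\rho$ over a fixed interval $[0,\rho_0]$ — in particular for $\rho=\bar\rho/\log(1/\eps)$ once $\eps$ is small. I expect this uniformity to be the main (and mild) obstacle; everything else is bookkeeping.
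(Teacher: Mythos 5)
Your proposal is correct and follows essentially the same route as the paper: expand $K$ from \eqref{eqn:K_Hamiltonian}, factor the difference of squares, use $I^{+}_\eps-I^{-}_\eps=O(\eps)$ and $\theta^{+}_\eps-\theta^{-}_\eps=O(\eps)$ from Corollaries \ref{cor:change_in action} and \ref{cor:change_in angle} to absorb the cross term and the cosine term into $O(\eps^2)$, and then invoke Proposition \ref{cor:generating} together with \eqref{eqn:change_action_dis_2} for the last two equalities. The uniformity in $\rho$ you flag is already guaranteed by the statements of Corollary \ref{cor:GdLM1} and Theorem \ref{thm:change_in_I} (valid for $0\le\rho\le\rho_0$), so no extra work is needed there.
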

\begin{proof}
The proof of this lemma is a similar computation to the ones done to compute the change in actions of the scattering map.
In fact, using the formulas for the change in actions and angles, Corollaries \ref{cor:change_in action} and \ref{cor:change_in angle},
particularly that $I^{+}_\eps - I^-_\eps=O(\eps)$ and
$\theta^+_\eps - \theta^-_\eps=O(\eps)$,
we have:
\begin{equation}
\begin{split}
K(\tilde z_\eps^+)-K(\tilde z^-_\eps) &=\frac{(I^+_\eps)^2}{2}-\frac{(I^-_\eps)^2}{2}+\eps a_{10}\left(\cos \theta^+_\eps - \cos \theta^-_\eps\right)\\
&=\frac{(I^{+}_\eps + I^-_\eps)}{2}(I^{+}_\eps -I^-_\eps)+\eps a_{10}
\left(
\cos \theta^+_\eps- \cos \theta^-_\eps
\right )\\
&=(I_0+O(\eps))(I^{+}_\eps -I^-_\eps)+O(\eps ^2)\\
&=I_0(I^{+}_\eps -I^-_\eps)+O(\eps ^2).
\end{split}
\end{equation}
Applying  Proposition  \ref{cor:generating} yields  the desired result.
\end{proof}

The next lemma compares the  actions and angles of solutions of systems \eqref{eq:model2} and \eqref{eq:model20}.
\begin{lem}\label{lem:dif I_theta}
Let $(I_\lambda(t),\theta_\lambda(t))$ a solution of system \eqref{eq:model2}
with initial condition $(I_0,\theta_0)$ and
$(I_0(t),\theta_0(t))$ the solution of system \eqref{eq:model20} with the same initial condition.
Then, there exists  $d'>d_{\textrm{max}}>0 $ such that,  for $|\eps|$ small enough, and for $0\le t\le T_0 \log \left(\frac{1}{\eps}\right)$ we have:
\begin{equation}\label{comparison}
\begin{split}
|I_\lambda(t)-I_0(t)| & \le d'(1-e^{-\lambda t})\\
|\theta_\lambda(t)-\theta_0(t)| &\le  d'\lambda \frac{t^2}{2}.
\end{split}
\end{equation}
\end{lem}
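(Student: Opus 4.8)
The plan is to track the discrepancy between the dissipative and the conservative inner solutions by setting
\[
u(t):=I_\lambda(t)-I_0(t),\qquad v(t):=\theta_\lambda(t)-\theta_0(t),
\]
so that $u(0)=v(0)=0$ because the two solutions start at the same point. Subtracting \eqref{eq:model20} from \eqref{eq:model2} and writing $-\lambda(I_\lambda-\omega_*)=-\lambda u-\lambda(I_0(t)-\omega_*)$, I obtain the coupled system, linear in $u$,
\[
\dot u=-\lambda u-\lambda\bigl(I_0(t)-\omega_*\bigr)+\eps a_{10}\bigl(\sin\theta_\lambda(t)-\sin\theta_0(t)\bigr),\qquad \dot v=u,
\]
where $-\lambda(I_0(t)-\omega_*)$ is the genuine forcing produced by the dissipation and the last term obeys $|\sin\theta_\lambda-\sin\theta_0|\le|v|$. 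A preliminary observation is that $I_0(t)$ stays bounded: \eqref{eq:model20} is Hamiltonian with energy $K$ of \eqref{eqn:K_Hamiltonian}, hence $I_0(t)^2=I_0(0)^2+2\eps a_{10}\bigl(\cos\theta_0(0)-\cos\theta_0(t)\bigr)$; since for the orbits relevant to the diffusion argument $I_0(0)=I_\lambda(0)$ lies in $[I_1,I_2]\subset(0,\infty)$, this gives $|I_0(t)-\omega_*|\le d_{\textrm{max}}+C\eps=:\widetilde d$, and $\widetilde d<d'$ once a constant $d'>d_{\textrm{max}}$ is fixed and $\eps$ is small.

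I would then pass to integral form. Variation of constants in the $u$-equation together with $e^{-\lambda(t-r)}\le1$ yields
\[
|u(t)|\le\widetilde d\,(1-e^{-\lambda t})+\eps|a_{10}|\int_0^t|v(r)|\,dr,\qquad |v(t)|\le\int_0^t|u(r)|\,dr.
\]
Inserting the first estimate into the second and using $1-e^{-\lambda r}\le\lambda r$ and Fubini turns this into a closed inequality for $v$:
\[
|v(t)|\le\frac{\widetilde d\,\lambda t^{2}}{2}+\eps|a_{10}|\int_0^t(t-r)\,|v(r)|\,dr\le\frac{\widetilde d\,\lambda t^{2}}{2}+\eps|a_{10}|\,T\int_0^t|v(r)|\,dr,
\]
with $T:=T_0\log(1/\eps)$ and $t\le T$. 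Gronwall's inequality then gives $|v(t)|\le\tfrac12\widetilde d\,\lambda t^{2}\exp\bigl(\eps|a_{10}|T^{2}\bigr)$, and since $\eps|a_{10}|T^{2}=\eps|a_{10}|T_0^{2}\log^{2}(1/\eps)\to0$, for $\eps$ small the exponential is $\le d'/\widetilde d$, which is the second bound in \eqref{comparison}. Feeding $|v(r)|\le\tfrac12 d'\lambda r^{2}$ back into the bound for $|u|$ produces
\[
|u(t)|\le\widetilde d\,(1-e^{-\lambda t})+\frac{\eps|a_{10}|\,d'\,\lambda t^{3}}{6}\le\Bigl(\widetilde d+\frac{\eps|a_{10}|\,d'\,t^{2}}{3}\Bigr)(1-e^{-\lambda t}),
\]
where I used that $\lambda t\le\eps\bar\rho T_0\le1$ for $\eps$ small, hence $\lambda t\le2(1-e^{-\lambda t})$ and so $\lambda t^{3}\le2(1-e^{-\lambda t})t^{2}$. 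For $t\le T$ the prefactor is $<d'$ once $\eps$ is small, which is the first bound in \eqref{comparison}.

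The estimates are elementary; the only point that needs care is the feedback between the bounds for $|u|$ and for $|v|$, which I resolve by eliminating $u$ to get a self-contained Gronwall inequality for $v$ and only afterwards recovering the $u$-estimate. The quantitative mechanism making everything close is that the coupling constant $\eps|a_{10}|$ multiplied by the square of the diffusion time window $\bigl(T_0\log(1/\eps)\bigr)^{2}$ still tends to $0$, so the Gronwall factor and the cubic remainder are absorbed into the slack $d'-d_{\textrm{max}}>0$; the only other ingredient is the boundedness of $I_0(t)$, which is where conservation of $K$ for \eqref{eq:model20} enters.
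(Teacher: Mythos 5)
Your proof is correct and takes essentially the same route as the paper: the same decomposition $u=I_\lambda-I_0$, $v=\theta_\lambda-\theta_0$, the variation-of-constants integral form for $u$, and a bootstrap between the $u$- and $v$-estimates, with the smallness ultimately coming from $\eps\log^2\left(\frac{1}{\eps}\right)\to 0$ and the slack $d'>d_{\textrm{max}}$. The only cosmetic differences are that you close the loop with a Gronwall inequality for $v$ (and justify $|I_0(t)-\omega_*|\le d_{\textrm{max}}+O(\eps)$ via conservation of $K$), whereas the paper runs an explicit two-pass iteration starting from the crude bound $|\sin\theta_\lambda-\sin\theta_0|\le 2$; both yield the stated estimates.
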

\begin{proof}
Calling
\[
u(t)=I_\lambda(t)-I_0(t), \quad \alpha(t)=\theta_\lambda(t)-\theta_0(t),
\]
one can easily see that:
\begin{equation}
\begin{split}
\dot u &= \lambda (I_0(t)-\omega_*-u)+\eps a_{10}\left(\sin (\theta_0(t)+\alpha)-\sin\theta_0(t)\right)\\
\dot \alpha &=  u.
\end{split}
\end{equation}
Therefore:
\begin{equation}\label{eq:vc}
\begin{split}
u (t) &= \int_0^t e^{-\lambda(t-s)}\left[ \lambda (I_0(s)-\omega_*)+\eps a_{10} \left(\sin (\theta_0(s)+\alpha(s))-\sin\theta_0(s)\right)\right]ds\\
\alpha (t) &=  \int_0^t u(s) ds.
\end{split}
\end{equation}
For the first equation in \eqref{eq:vc} we have used the method of variation of constants.
\\
We will bound $u$ in four steps:
\begin{enumerate}
\item
First we will use the first equation of \eqref{eq:vc} to obtain a weak bound for $u$;
\item
Second, we will use the obtained bound on $u$ in the second  equation of \eqref{eq:vc} to obtain a  bound for $\alpha$;
\item
Third, we will use the obtained bound on $\alpha$ in the first equation of \eqref{eq:vc} to obtain a sharper bound for $u$;
\item Finally, using the sharper bound on $u$ in the   second  equation of \eqref{eq:vc} to obtain a new  bound for $\alpha$.
\end{enumerate}
We will use that the solution $I_0(t)$ is a bounded function, in fact $|I_0(t)-\omega_*|\le d_\textrm{max}$, where we recall $d_\textrm{max}=\max\{|I_1-\omega_*|,|I_2-\omega_*|\}$. From the first equation of \eqref{eq:vc} we obtain:
\begin{equation}\label{eq:fita1}
\begin{split}
|u (t)| &\le  \int_0^t e^{-\lambda(t-s)}\left(\lambda d_{\textrm{max}}+2 \eps a_{10} \right)ds\\
 &\le  \left(  d_\textrm{max}+2 \frac{\eps}{\lambda}a_{10} \right)(1-e^{-\lambda t}).
\end{split}
\end{equation}
Using this bound in the second equation of \eqref{eq:vc} and that $0\le 1-e^{-\lambda t}\le \lambda t$, we obtain:
\begin{equation}
\begin{split}
|\alpha(t)| &\le  \int_0^t
\left(  d_\textrm{max} +2 \frac{\eps}{\lambda} a_{10} \right)\lambda  s\, ds \\
&= \left(  d_\textrm{max}+2 \frac{\eps}{\lambda}a_{10}  \right)\lambda \frac{t^2}{2}.
\end{split}
\end{equation}
\\
Finally, we will use the obtained bound on $\alpha$, and the fact that $|\sin (\theta_0(t)+\alpha)-\sin\theta_0(t)|\le |\alpha|$ in the first equation of \eqref{eq:vc} to obtain:
\begin{equation}
\begin{split}
|u (t)| &\le   d_{\textrm{max}} (1-e^{-\lambda t})+
 \eps a_{10} \int_0^te^{-\lambda(t-s)} |\alpha(s)| ds\\
 &\le   d_\textrm{max} (1-e^{-\lambda t})
  + \eps a_{10}\left(  d_{\textrm{max}}+2 \frac{\eps}{\lambda} a_{10}\right)\frac{ \lambda t^2}{2}\int_0^t e^{-\lambda(t-s)} ds \\
  &\le
 \left[  d_{\textrm{max}}
  +\eps a_{10}\left(  d_{\textrm{max}}+2 \frac{\eps}{\lambda}a_{10} \right)\frac{t^2}{2}\right](1-e^{-\lambda t}).
\end{split}
\end{equation}
Observe that, as $0\le t \le T=T_0\log \left(\frac{1}{\eps}\right)$, and $\lambda=\eps \frac{\bar \rho}{ \log \left(\frac{1}{\eps}\right)}$ we have that:
\[ \eps a_{10}\left(  d_{\textrm{max}}+2 \frac{\eps}{\lambda}a_{10} \right)\frac{t^2}{2}
\le
\eps \log^2 \left(\frac{1}{\eps}\right)\left(a_{10}d_{\textrm{max}}\frac{T_0^2}{2}\right) + \eps \log^3\left(\frac{1}{\eps}\right)\left( \frac{a_{10}^2}{\bar\rho}T_0^2\right)
\]
which is arbitrarily small if $\eps$ is small (indeed, $ \eps \log^p\left(\frac{1}{\eps}\right)\ll \eps^\nu$ for $p>0$ and $\nu\in(0,1)$).
Therefore there exist $d'>d_\textrm{max}$ such that:
\begin{equation}
|u (t)| \le d'(1-e^{-\lambda t})
\end{equation}
and consequently we have:
\[
|I_\lambda(t)-I_0(t)| \le d'(1-e^{-\lambda t})
\]
for $0\le t\le T_0 \log \left(\frac{1}{\eps}\right)$.
Note that $d'$ can be chosen arbitrarily close to $d_\textrm{max}$ provided that $\eps$ is small enough.
\\
Using this new bound  in the second equation of \eqref{eq:vc} and that $0\le 1-e^{-\lambda t}\le \lambda t$, we obtain:
\begin{equation}
\begin{split}
|\theta_{\lambda}(t)-\theta_0(t)| &\le d'\lambda \frac{t^2}{2}.
\end{split}
\end{equation}
\end{proof}
The next lemma estimates the change in the energy $K$ by the inner dynamics over time intervals of order $O\left(\log\left(\frac{1}{\eps}\right)\right)$.
\begin{lem}\label{lem:K_change}
There exists $d''>0$, such that for $|\eps|$ small enough, and for $0\le t\le T_0 \log \left(\frac{1}{\eps}\right)$ we have:
\begin{equation}\label{eqn:change_K}
| K(z_\lambda(t))-K(z_0(t))|\leq d''  (1-e^{-\lambda t}).
\end{equation}
\end{lem}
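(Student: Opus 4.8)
The plan is to exploit the fact, already recorded in the paragraph following \eqref{eqn:K_Hamiltonian}, that $z_0(t)=(I_0(t),\theta_0(t))$ is an orbit of the Hamiltonian flow of $K$, so $K$ is a first integral along it. Since $z_\lambda$ and $z_0$ share the same initial condition, $K(z_0(t))=K(z_0(0))=K(z_\lambda(0))$ for all $t$, and therefore
\[
K(z_\lambda(t))-K(z_0(t))=K(z_\lambda(t))-K(z_\lambda(0))=\int_0^t \frac{d}{dr}\,K(z_\lambda(r))\,dr .
\]
Thus the whole estimate reduces to controlling $\tfrac{d}{dt}K$ along the \emph{dissipative} orbit $z_\lambda$.

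The key computation is that along $z_\lambda$ the $O(\eps)$ forcing terms cancel. Using $\partial K/\partial I=I$, $\partial K/\partial\theta=-\eps a_{10}\sin\theta$ together with \eqref{eq:model2},
\[
\frac{d}{dt}K(z_\lambda(t))=I_\lambda\bigl(-\lambda(I_\lambda-\omega_*)+\eps a_{10}\sin\theta_\lambda\bigr)+(-\eps a_{10}\sin\theta_\lambda)\,I_\lambda=-\lambda\,I_\lambda(t)\bigl(I_\lambda(t)-\omega_*\bigr),
\]
so that
\[
K(z_\lambda(t))-K(z_0(t))=-\lambda\int_0^t I_\lambda(r)\bigl(I_\lambda(r)-\omega_*\bigr)\,dr .
\]
Hence the change in $K$ comes purely from the dissipation and is linear in $\lambda$.

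Next I would bound the integrand. As in Section \ref{sec:proof_case1}, I may assume $I_\lambda(r)\in[I_1,I_2]$ for $r$ in the relevant time window, since otherwise diffusion in $I$ already holds along the inner dynamics alone; alternatively, conservation of $K$ along $z_0$ forces $|I_0(r)|$ to stay bounded, and then Lemma \ref{lem:dif I_theta} gives $|I_\lambda(r)-I_0(r)|\le d'$, confining $I_\lambda(r)$ to a fixed bounded interval. Either way there is a constant $C>0$, independent of $\eps$, with $|I_\lambda(r)(I_\lambda(r)-\omega_*)|\le C$ for $0\le r\le T_0\log(1/\eps)$, whence $|K(z_\lambda(t))-K(z_0(t))|\le C\lambda t$.

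Finally I would convert $\lambda t$ into $1-e^{-\lambda t}$. Since $\lambda=\eps\bar\rho/\log(1/\eps)$, in the window $0\le t\le T_0\log(1/\eps)$ we have $0\le\lambda t\le \eps\bar\rho T_0$, which is at most $1$ once $\eps$ is small enough; and for $x\in[0,1]$ one has $1-e^{-x}\ge x/2$, i.e.\ $x\le 2(1-e^{-x})$. Therefore $\lambda t\le 2(1-e^{-\lambda t})$ and
\[
|K(z_\lambda(t))-K(z_0(t))|\le 2C\,(1-e^{-\lambda t}),
\]
so the claim holds with $d''=2C$. I do not expect a serious obstacle: the one point requiring care is the a priori confinement of $I_\lambda$ to a bounded set over the logarithmically long time interval, and the crucial observation is the cancellation of the forcing terms in $\tfrac{d}{dt}K$, which leaves only the $O(\lambda)$ dissipative contribution rather than a term of size $O(\eps t)$ or $O(\eps^{1/2})$.
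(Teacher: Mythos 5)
Your proposal is correct, but it proves the lemma by a genuinely different route than the paper. The paper's proof never differentiates $K$ along the dissipative orbit; instead it expands $K(z_\lambda(t))-K(z_0(t))=\frac12\bigl(I_\lambda^2(t)-I_0^2(t)\bigr)+\eps a_{10}\bigl(\cos\theta_\lambda(t)-\cos\theta_0(t)\bigr)$ and feeds in \emph{both} estimates of Lemma \ref{lem:dif I_theta}, namely $|I_\lambda-I_0|\le d'(1-e^{-\lambda t})$ and $|\theta_\lambda-\theta_0|\le d'\lambda t^2/2$, then absorbs the terms $(1-e^{-\lambda t})^2$ and $\eps\lambda t^2/2$ into $(1-e^{-\lambda t})$ for small $\eps$, obtaining $d''$ close to $d'(d_{\textrm{max}}+\omega_*)+\frac12 (d')^2$. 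You instead use conservation of $K$ along $z_0$ plus the exact cancellation of the $O(\eps)$ forcing in $\frac{d}{dt}K(z_\lambda)=-\lambda I_\lambda(I_\lambda-\omega_*)$, so the deviation is manifestly purely dissipative and of size $C\lambda t\le 2C(1-e^{-\lambda t})$; this is cleaner, bypasses the $\theta$-estimate of Lemma \ref{lem:dif I_theta} entirely (you only need an a priori bound on $I_\lambda$, which you correctly supply either by the usual ``otherwise diffusion already occurs'' argument of Section \ref{sec:proof_case1} or via the $I$-part of Lemma \ref{lem:dif I_theta} together with boundedness of $I_0$ from conservation of $K$), and it isolates the mechanism more transparently. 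What the paper's route buys is that it is a one-line corollary of the comparison lemma that is needed anyway for the return-to-strip argument, and it yields an explicit constant in terms of $d'$, $d_{\textrm{max}}$, $\omega_*$; your constant $2C\approx 2\,\sup|I_\lambda(I_\lambda-\omega_*)|$ is of the same order, and since $\bar\rho$ is chosen only afterwards (subject to $\bar\rho<c/(2d''T_0)$), either constant serves. Amusingly, the paper's source contains a commented-out draft lemma implementing essentially your computation ($\frac{d}{dt}K(z_\lambda)=-\lambda I(I-\omega_*)$ with the bound $C_{\textrm{max}}\eps\bar\rho T_0$), so the authors considered and then replaced this argument; your write-up is a sound completion of it, and the only point deserving care — the confinement of $I_\lambda$ over the time window $0\le t\le T_0\log(1/\eps)$ — is exactly the point you flag and address.
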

\begin{proof}
From \eqref{eqn:K_Hamiltonian} and Lemma \ref{lem:dif I_theta} we obtain
\begin{equation}
\label{eqn:K_ineq}
\begin{split}
 | K(z_\lambda(t))-K(z_0(t))|&=\Big|\frac{1}{2}(I^2_\lambda(t)-I^2_0(t))+\eps a_{10}(\cos(\theta_\lambda(t))-\cos(\theta_0(t)))\Big|\\
 &\leq \Big| \frac{1}{2}(I_\lambda(t)-I_0(t)) (I_\lambda(t)+I_0(t))\Big|+\eps a_{10}|\theta_\lambda(t)-\theta_0(t)|\\
 &\leq \Big| \frac{1}{2}(I_\lambda(t)-I_0(t)) (2I_0(t)+ (I_\lambda(t)-I_0(t)))\Big|+\eps a_{10}|\theta_\lambda(t)-\theta_0(t)|\\
 &\leq \frac{1}{2}d'(1-e^{-\lambda t})\left(2(d_\textrm{max}+\omega_*)+d'(1-e^{-\lambda t})\right) + \eps a_{10}d'\lambda\frac{t^2}{2}\\
 &\leq d'(d_\textrm{max}+\omega_*)(1-e^{-\lambda t})+\frac{(d')^2}{2} (1-e^{-\lambda t})^2+ \eps a_{10}d'\lambda\frac{t^2}{2}.
\end{split}
\end{equation}
Taking into account that $\lambda=\frac{\eps\bar{\rho}}{\log\left(\frac{\eps}{2}\right)}$, for $|\eps|$ sufficiently small we have
\begin{equation*}
\begin{split}
(1-e^{-\lambda t})^2&\leq (1-e^{-\lambda t}), \\
\eps\lambda\frac{t^2}{2}&\leq (1-e^{-\lambda t}),
\end{split}
\quad \text{for} \quad 0\le t\le T_0\log{(\frac{1}{\eps})}.
\end{equation*}
Therefore, using \eqref{eqn:K_ineq} we conclude that there exists $d''>d'$, such that
\[
| K(z_\lambda(t))-K(z_0(t)) |\leq d''  (1-e^{-\lambda t}).
\]
We note that $d''$ can be chosen arbitrarily close to $d'(d_\textrm{max}+\omega_*)+\frac{1}{2}(d')^2$ provided that $|\eps|$ is sufficiently small.
\end{proof}

We now continue with the proof of Theorem \ref{thm:main}.
By Lemma \ref{lem:scattering_K}, given that $0<I_1<I_2$, the  effect of the scattering map is an increase in the energy  $K$ by $O(\eps)$. Let  $S$ be a strip as in \eqref{eqn:strip} and $c>0$ such that
\[
K(\sigma_\eps(z))-K(z)>c\eps, \textrm { for all } z\in S.
\]
From Lemma \ref{lem:K_change}, the maximum loss  in $K$ by the inner  flow over a time $0\leq t\leq  T_0\log\left ( \frac{1}{\eps}\right)$ is
\[d'' (1-e^{-\eps\bar{\rho} T_0}).\]
Switching from the flow to the time-$2\pi$ map, it follows that
the  maximum loss in $K$ by the inner map  after $k$ iterates of  $f_\eps$, with  $2\pi k \leq T_0\log(1/\eps)$, is also
\[
d''(1-e^{-\eps\bar{\rho} T_0}).
\]
Note that the level sets of $K$ are $O(\eps^{1/2})$-close to level sets of $I$.
Therefore we can choose $0<I_1<I_2$  such that the growth in $K$ by repeated applications of the scattering map corresponds
to a change in $I$ from below $I_1$ to above $I_2$.
Provided that $|I_1-I_2|$ is suitably small, there exists $k_{\textrm{max}}$ such that whenever $z\in S$ we have $f_0^k(z)\in S$ for some $k\leq k_{\textrm{max}}$.
By Lemma \ref{comparison}, for $0\leq t\leq T_0\log(1/\eps)$,
\[
|\theta_{\lambda}(t)-\theta_0(t)|<d'\lambda\frac{t^2}{2}\leq d' \eps\log\left ( \frac{1}{\eps}\right)\frac{\bar{\rho}T_0^2}{2},\]
so, for $\eps$ sufficiently small, $\theta_{\lambda}(t)$ is $O(\eps^\nu)$-close to $\theta_0(t)$, for some $\nu\in(0,1)$.
This implies that each point $z$ in the strip returns to the strip in a maximum of $k_{\textrm{max}}$ iterates of $f_\eps$.
Thus, starting with a point $z\in S$, applying the scattering map $\sigma_\eps$ to $z$, and then applying the inner map $(f_\eps)_{\mid \Lambda_\eps}$ for $k$ times, where $2\pi k\leq T_0\log(1/\eps)$, until $(f_\eps)^k(\sigma_\eps(z))\in S$, we obtain a net growth  in $K$ that is at least
\[
c\eps-d''(1-e^{-\eps\bar{\rho} T_0}).
\]
We require that this  net growth in $K$ is at least $c\eps/2$, that is
\[
c\eps-d''(1-e^{- \eps\bar{\rho} T_0})>\frac{c}{2}\eps.
\]
Similarly to the proof in Section \ref{sec:proof_case1}, in order to be able to achieve a growth in $K$ of at least $c\eps/2$ per step, for all $\eps$ sufficiently small, we need to choose $\bar\rho$ small enough so that
\begin{equation*}
\bar{\rho} < \frac{c}{2d'' T_0}.
\end{equation*}
We obtain orbits of the iterated function system (IFS) generated by $\{f_\eps, \sigma_\eps\}$,  of the form $z_{n+1}=f_\eps^{k(n)}\circ \sigma_\eps(z_n)$ with $k(n)=O(\log(1/\eps))$, such that $K$ increases by $c\eps/2$ from $z_n$ to $z_{n+1}$. In $O(\frac{1}{\eps}\log\left(\frac{1}{\eps}\right))$ steps, such orbits  increase $K$, as well as $I$, by $O(1)$.
By our choice of $I_1,I_2$, these orbits go from below $I_1$ to above $I_2$.
In Section \ref{sec:existence_of_pseudo-orbits} we use these orbits of the IFS to produce diffusing pseudo-orbits as claimed in Theorem \ref{thm:main}.

\subsection{Existence of diffusing pseudo-orbits}
\label{sec:existence_of_pseudo-orbits}
In Sections \ref{sec:proof_case1} and \ref{sec:proof_case2} we  obtained diffusing orbits of the iterated function system (IFS) generated by $\{f_\eps, \sigma_\eps\}$   consisting of orbit segments of the form $z_{n+1}=f_\eps^{k(n)}\circ \sigma_\eps(z_n)$ in $\Lambda_\eps$, $n=1,\ldots, m-1$, where
$k(n)=O(\log (1/\eps))$ and $m=O(1/\eps)$,  such that
$I(z_0)<I_1$ and $I(z_m)>I_2$.
We can rearrange these orbits into  orbit segments of the form
$x_{i+1}=f^{m_i}\circ\sigma_\eps\circ f^{n_i}(x_i)$,  with $m_i,n_i=O(\log(1/\eps))$, for $i=0,\ldots,m-1$,  such that
$I(x_0)<I_1$ and $I(x_m)>I_2$.
Each such orbit segment can be approximated up to $O(\eps)$ by a true orbit of the  Poincar\'e map of the form
$y^{i}_\textrm{end}=f^{k_i}(y^i_\textrm{start})$, $k_i=O(\log(1/\eps))$,
with $d(y^{i}_\textrm{end},y^{i+1}_\textrm{start})<O(\eps)$; see Remark \ref{rem:scattering_homoclinic}.
The diffusion time is $O(\frac{1}{\eps}\log(\frac{1}{\eps}))$.
Finally, each such orbit of $f_\eps$ gives rise to an orbit segment $z^i(t)$, $t\in[t_i,t_{i+1}]$, of the flow, satisfying the requirements of Theorem \ref{thm:main}.

\bibliographystyle{alpha}
\bibliography{diffusion}

\end{document}